\numberwithin{equation}{section}
\theoremstyle{plain}
\newtheorem{Th}{Theorem}[section]
\newtheorem{Lemma}[Th]{Lemma}
\newtheorem{Cor}[Th]{Corollary}
\newtheorem{Prop}[Th]{Proposition}
\theoremstyle{definition}
\newtheorem{Def}[Th]{Definition}
\newtheorem{Rem}[Th]{Remark}
\newtheorem{?}[Th]{Problem}
\def\al{\alpha}
\def\be{\beta}
\def\w{\wedge}
\def\R{\mathbb{R}}
\def\C{\mathbb{C}}
\def\Lm{\Lambda}
\def\om{\omega}
\def\Om{\Omega}
\def\ip{\raise1pt\hbox{\large$\lrcorner$}\>}
\DeclareMathOperator{\vol}{vol}
\DeclareMathOperator{\Ric}{Ric}
\begin{document}
	
\title[Abelian instantons on $S^1$-invariant K\"ahler Einstein $6$-manifolds]{Explicit abelian instantons on $S^1$-invariant K\"ahler Einstein $6$-manifolds}
%	Instantons and Special Lagrangians on the canonical bundle of $\C\mathbb{P}^2$}

\author[U. Fowdar]{Udhav Fowdar}

\address{IMECC - Unicamp \\ 
	Rua S\'ergio Buarque de Holanda, 651\\
	13083-859\\
	Campinas - SP\\
	Brazil}

\email{udhav@unicamp.br}

\keywords{Hermitian Yang-Mills, Special Lagrangians, Circle action} 
\subjclass[2010]{MSC 53C07, 53C38, 53C55}

\begin{abstract} 
We consider a dimensional reduction of the (deformed) Hermitian Yang-Mills condition on $S^1$-invariant K\"ahler Einstein $6$-manifolds. This allows us to reformulate the (deformed) Hermitian Yang-Mills equations in terms of data on the quotient K\"ahler $4$-manifold. In particular, we apply this construction to 
the canonical bundle of $\mathbb{C}\mathbb{P}^2$ endowed with the Calabi ansatz metric
to find explicit abelian $SU(3)$ instantons and we show that these are determined by the spectrum of $\mathbb{C}\mathbb{P}^2$. We also find $1$-parameter families of explicit deformed Hermitian Yang-Mills connections. As a by-product of our investigation we find a coordinate expression for its holomorphic volume form 
which leads us to construct a special Lagrangian foliation of $\mathcal{O}_{\mathbb{C}\mathbb{P}^2}(-3)$.    
\end{abstract}

\maketitle
\tableofcontents

\section{Introduction}
Given a K\"ahler $6$-manifold $(P^6,g,J,\om)$ and a Hermitian vector bundle $E\to P^6$, a connection form $\underline{A}$ on $E$ is called Hermitian Yang-Mills if its curvature form $F_{\underline{A}}$ is of type $(1,1)$ with respect to $J$ and $g(F_{\underline{A}},\om)$ is a constant multiple of the identity. If $P^6$ admits an $S^1$ action preserving all the above data then one can reduce the problem of finding Hermitian Yang-Mills connections on $P^6$ to solving a twisted Bogomolny type system of PDEs on the K\"ahler quotient $M^4:=P^6/\!\!/S^1$. This paper is concerned with the problem of finding explicitly such $S^1$-invariant connections when the gauge group is $U(1)$ and $P^6$ is a \textit{non-compact} K\"ahler Einstein manifold, in particular we focus on the Calabi-Yau case. The main result of this paper is that (a subset of) the spectrum of $M^4$ determines such $SU(3)$ instantons and hence the moduli space of these instantons is identified with (a subset of) the eigenfunctions of the Laplacian of $M^4$. A detailed study in made in the case when $P^6=\C^3$ endowed with its standard Euclidean structure and $P^6=\mathcal{O}_{\C\mathbb{P}^2}(-3)$, the canonical bundle of $M^4=\C\mathbb{P}^2$, endowed with the Calabi ansatz metric. In the latter cases our result can be stated as follows: let $\mu_k=4k(k+2)$ denote an eigenvalue of the Laplacian of $\C\mathbb{P}^2$ then
\begin{Th}
	For each positive integer $k$ there exist non-trivial $S^1$-invariant abelian $SU(3)$-instantons on $\C^3$.
	If $k$ is of the form $3n$ or $3n+1$ for $n\in \mathbb{Z}^+_0$, then there exist non-trivial  $S^1$-invariant  abelian $SU(3)$-instantons on $\mathcal{O}_{\C\mathbb{P}^2}(-3)$.
\end{Th}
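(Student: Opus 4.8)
The plan is to apply the dimensional reduction established above, turning the abelian $SU(3)$-instanton equations on $P^6$ into an eigenvalue problem on the K\"ahler quotient $M^4=\C\mathbb{P}^2$. Away from its fixed-point set, $P^6$ is foliated by the circle bundles $\{H=\text{const}\}$ over $M^4$, where $H$ is the moment-map coordinate for the $S^1$-action. For an $S^1$-invariant $U(1)$-connection $\underline{A}$ on a line bundle $L\to P^6$ I would decompose its curvature into a horizontal part pulled back from $M^4$, a part involving $dH$, and a part along the circle fibre together with the induced connection on the fibration. Because the gauge group is abelian, the Hermitian Yang-Mills conditions --- that $F_{\underline A}$ be of type $(1,1)$ and that $g(F_{\underline A},\om)=0$ (the Calabi-Yau, trace-free case) --- become an effectively linear system in these components, with coefficients determined by the explicit metric.

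First I would separate variables, writing the unknown as a profile in $H$ times a function $\psi$ on $M^4$. The type and closedness conditions force $\psi$ to satisfy $\Delta\psi=\mu\,\psi$, while the trace-free condition, after inserting the explicit Calabi ansatz (respectively flat) warping functions, yields a second-order linear ODE in $H$ whose indicial data are fixed by the metric. Consistency requires the separation constant $\mu$ to be a Laplace eigenvalue of $\C\mathbb{P}^2$, and these are exactly $\mu_k=4k(k+2)$; thus each positive integer $k$ produces a candidate instanton built from an eigenfunction in the $\mu_k$-eigenspace.

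For $\C^3$, the cone structure makes the two solutions of the $H$-ODE behave like powers of the radial variable, and I would keep the branch that is bounded and smooth at the apex. Since the apex is a single point there is no topological constraint on $L$, so the construction closes up to a genuine smooth connection for every $k$, and non-triviality follows because the curvature is non-zero whenever $\psi$ is non-constant. This gives the first assertion.

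The main obstacle is the global regularity over $\mathcal{O}_{\C\mathbb{P}^2}(-3)$: the locally defined connection must extend smoothly across the zero section $\C\mathbb{P}^2$ and define an honest line bundle on the total space. The unit circle bundle of $\mathcal{O}_{\C\mathbb{P}^2}(-3)$ is the lens space $S^5/\mathbb{Z}_3$, so smoothness across the zero section requires the relevant mode to carry an $S^1$-weight compatible with this $\mathbb{Z}_3$-quotient --- equivalently, the line bundle over $\C\mathbb{P}^2$ associated with the eigenfunction $\psi$ must extend over the fibres of $\mathcal{O}_{\C\mathbb{P}^2}(-3)$ and have integral first Chern class. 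Tracking the admissible weights against those appearing in the $\mu_k$-eigenspace shows this is possible precisely when $k\equiv 0$ or $1\pmod 3$, i.e. $k=3n$ or $k=3n+1$. I expect this weight-and-integrality bookkeeping across the zero section, together with verifying that the selected ODE branch stays smooth there and at infinity, to be the delicate point; the $\C^3$ case is easier precisely because the absence of a zero section removes this congruence condition.
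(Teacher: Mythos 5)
Your overall strategy matches the paper's: reduce to the quotient $\C\mathbb{P}^2$, write the Higgs field as $\kappa(H)F$ with $\Delta F=\mu_k F$, $\mu_k=4k(k+2)$, and reduce the trace condition to a second-order linear ODE for the radial profile $\kappa$. The $\C^3$ case is also handled as in the paper: there the ODE has solutions $\kappa=c_1r^{2k}+c_2r^{-4-2k}$ and one keeps $c_2=0$, giving an instanton for every $k$.

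However, the crucial step --- why $\mathcal{O}_{\C\mathbb{P}^2}(-3)$ admits solutions exactly for $k\equiv 0,1\pmod 3$ --- is not actually established in your argument, and the mechanism you propose is not the right one. You attribute the congruence to a topological/weight condition at the zero section (compatibility with the $\mathbb{Z}_3$-quotient $S^5/\mathbb{Z}_3$ and integrality of a Chern class), but you give no computation; you simply assert that "tracking the admissible weights" yields $k\equiv0,1\pmod 3$, which assumes the conclusion. In the paper the restriction is purely an ODE/special-function phenomenon with no topological input: substituting $z=r^6/C$ and $\kappa=C(1-z)f(z)$ turns the radial equation into the hypergeometric equation with parameter pairs $\bigl(\tfrac{3-k}{3},\tfrac{k+5}{3}\bigr)$ and $\bigl(\tfrac{1-k}{3},\tfrac{k+3}{3}\bigr)$. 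The relevant domain is $z\in[1,\infty)$ (i.e.\ $r\in[C^{1/6},\infty)$), which lies outside the unit radius of convergence of the hypergeometric series, so a globally defined solution exists precisely when one of the series terminates in a polynomial, i.e.\ when $\tfrac{3-k}{3}$ or $\tfrac{1-k}{3}$ is a non-positive integer --- exactly $k=3n$ or $k=3n+1$; for $k\equiv2\pmod3$ both fundamental solutions fail to extend. Without this analysis (or an honest substitute for it) the second assertion of the theorem is unproved, and the "smoothness across the zero section" framing misidentifies where the obstruction actually lives: the delicate issue is global existence of $\kappa$ on all of $[C^{1/6},\infty)$, not a quantization condition at $r=C^{1/6}$.
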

For a more precise statement see Theorem \ref{maintheoremcanonicalbundle}.
Alongside Hermitian Yang-Mills connections, we also consider the deformed Hermitian Yang-Mills equation (see definition \ref{dhymdefinition}). In particular, in this case we show that
\begin{Th}
There exist three $1$-parameter families of $S^1$-invariant deformed Hermitian Yang-Mills connections on $\mathcal{O}_{\C\mathbb{P}^2}(-3)$. Moreover, exactly one connection in these families is also a (non-trivial) Hermitian Yang-Mills connection.
\end{Th}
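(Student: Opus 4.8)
The plan is to push the dimensional reduction of the deformed Hermitian Yang--Mills (dHYM) equation obtained above through the extra symmetry of the Calabi ansatz, reducing everything to a single first-order ODE in the radial coordinate. Away from the zero section I would write $P^6=\mathcal{O}_{\C\mathbb{P}^2}(-3)$ as a family of circle bundles over $\C\mathbb{P}^2$ with radial coordinate $r$, fibre connection $1$-form $\eta$ normalised so that $d\eta=c\,\om_{FS}$ for a nonzero constant $c$ (here $\om_{FS}$ is the Fubini--Study form). Requiring invariance under the full isometry group, rather than only the fibrewise $S^1$, forces every invariant closed $(1,1)$-form into the span of $\om_{FS}$ and $dr\w\eta$. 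Thus the Calabi ansatz K\"ahler form is $\om_P=u(r)\,\om_{FS}+v(r)\,dr\w\eta$, where the closedness relation gives $u'=cv$ and the Ricci-flat equation fixes $u,v$ explicitly, while the curvature of an invariant abelian connection $\underline A=\Phi(r)\,\eta$ is $F_{\underline A}=c\Phi\,\om_{FS}+\Phi'\,dr\w\eta$, governed by the single Higgs field $\Phi(r)$.

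Next I would substitute these into the dHYM condition $\operatorname{Im}\!\big(e^{-i\hat\theta}(\om_P+iF_{\underline A})^3\big)=0$. Because $\om_{FS}^3=0$ and $(dr\w\eta)^2=0$, a single wedge monomial survives:
\[ (\om_P+iF_{\underline A})^3=3\,p^2q\;\om_{FS}^2\w dr\w\eta,\qquad p=u+i\,c\Phi,\quad q=v+i\,\Phi'. \]
Hence the dHYM equation is precisely the requirement that the phase $2\arg p+\arg q$ be a constant $\hat\theta$, equivalently the first-order ODE
\[ \Phi'=v\,\frac{\tan\hat\theta\,(u^2-c^2\Phi^2)-2cu\Phi}{(u^2-c^2\Phi^2)+2cu\Phi\,\tan\hat\theta}. \]

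I would then insert the explicit $u,v$ and use $u'=cv$ to separate variables. After one integration the constant-phase condition becomes an algebraic relation $\Psi(\Phi,r;\hat\theta)=\text{const}$; smoothness of $\underline A$ across the zero section fixes this constant in terms of $\hat\theta$, leaving $\hat\theta$ as a single free parameter, and the prescribed asymptotics at infinity select the admissible solutions. I expect the resulting relation to be cubic in a natural fibre variable --- degree three being forced by the third power in $n=3$ --- so that its three branches furnish exactly the three $1$-parameter families, each carrying $\hat\theta$ as its parameter. Carrying out this integration in closed form and verifying that all three branches yield smooth global connections is where the real work lies and is the main obstacle.

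Finally, to detect the Hermitian Yang--Mills connections inside these families I would use that the trace condition reduces here to the \emph{linear} first-order equation $\Lm F_{\underline A}=\tfrac{2c\Phi}{u}+\tfrac{\Phi'}{v}=\text{const}$. A dHYM solution is also HYM exactly when $r\mapsto\Lm F_{\underline A}$ is constant along it; since each family is only $1$-parameter this is an over-determined condition, so I expect it to be met for a single value of $\hat\theta$ in a single one of the three branches, giving exactly one common solution. The concluding step is to check that this distinguished connection has $\Phi\not\equiv0$, ruling out the flat connection and confirming that it is a non-trivial Hermitian Yang--Mills connection, which completes the argument.
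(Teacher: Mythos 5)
Your reduction has the same skeleton as the paper's: an invariant abelian connection $\underline{A}=\Phi(r)\eta$ on the Calabi ansatz space, the observation that $(\om_P+iF_{\underline{A}})^3$ collapses to the single monomial $3p^2q\,\om_{FS}^2\w dr\w\eta$ with $p=u+ic\Phi$, $q=v+i\Phi'$, and hence a single first-order ODE for the radial Higgs field. Carried out in the paper's variables this is exactly $\kappa'(\kappa^2-H^2)-2H\kappa=0$, whose general solution is the cubic $\kappa^3-3H^2\kappa-c/4=0$, so your anticipation of a cubic relation with three branches is correct.

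The genuine gap is the source of the $1$-parameter. The paper's Definition \ref{dhymdefinition} is the \emph{fixed-phase} equation (\ref{dhym}), i.e.\ $\operatorname{Im}\big((\om+iF_{\underline{A}})^3\big)=0$; a connection solving $\operatorname{Im}\big(e^{-i\hat\theta}(\om+iF_{\underline{A}})^3\big)=0$ for $\hat\theta\not\equiv 0 \pmod\pi$ does \emph{not} satisfy (\ref{dhym}) and so is not a deformed Hermitian Yang--Mills connection in the sense of the statement. You fix the constant of integration of your ODE by a smoothness condition and promote $\hat\theta$ to the family parameter; this proves a different theorem, and if you instead set $\hat\theta=0$ to match the definition, your construction leaves no free parameter at all. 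In the paper the parameter is precisely the constant of integration $c$ of the fixed-phase ODE: for $c>0$ small relative to the size of the zero section the cubic has three real branches over $H\in[C^{1/3}/2,\infty)$, two asymptotic to $\kappa=\pm\sqrt{3}H$ and one to $\kappa=0$, giving the three families on $\mathcal{O}_{\C\mathbb{P}^2}(-3)$, whereas on $\C^3$, where $H$ descends to $0$, only one branch is globally defined. Finally, the uniqueness of the non-trivial Hermitian Yang--Mills member is left in your write-up as an expectation ("I expect it to be met for a single value"); in the paper it is explicit: for this ansatz the HYM condition forces $\kappa$ to be linear in $H$ modulo the $H^{-2}$ instanton mode, and intersecting with the cubic singles out $c=0$ and $\kappa=\pm\sqrt{3}H$, i.e.\ $F_{\underline{A}}=\mp\sqrt{3}\,\om$ --- one connection after the $\mathbb{Z}_2$ identification $F_{\underline{A}}\mapsto -F_{\underline{A}}$, with $\kappa=0$ discarded as the flat connection.
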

For a more precise statement see Theorems \ref{dhymtheorem} and \ref{dhymC3theorem}.
Our investigation also leads us a coordinate expression for the holomorphic volume form $\Om$ on $\mathcal{O}_{\C\mathbb{P}^2}(-3)$ and using this we show that
\begin{Th}
Outside a set of complex codimension $1$, $\mathcal{O}_{\C\mathbb{P}^2}(-3)$ admits a foliation by special Lagrangians. As the zero section $\C\mathbb{P}^2$ shrinks to a point so that $\mathcal{O}_{\C\mathbb{P}^2}(-3) \to \C^3$ this descends to a special Lagrangian foliation of $\C^3$ by flat $\R^+_0 \times \R^2$.
\end{Th}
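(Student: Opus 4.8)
The plan is to exploit the explicit coordinate description of the Calabi-ansatz data obtained above. Writing $\xi$ for the fibre coordinate of $\mathcal{O}_{\C\mathbb{P}^2}(-3)=K_{\C\mathbb{P}^2}$ and $(z_1,z_2)$ for affine coordinates on the base, the coordinate form of the holomorphic volume form is the tautological expression $\Om=d\xi\wedge dz_1\wedge dz_2$, while the K\"ahler form is the warped sum $\om=a(t)\,\pi^*\om_{FS}+b(t)\,\tfrac{i}{2}D\xi\wedge\ovl{D\xi}$, with $t=|\xi|^2h$, $h$ the fibre metric and $D\xi$ the covariant differential of $\xi$. The first observation is that the fibrewise $S^1$-action $\xi\mapsto e^{i\vp}\xi$ preserves $\om$ but rotates the phase of $\Om$, since $\mathcal{L}_V\Om=i\,\Om$ for the generating field $V=i\xi\,\partial_\xi-i\ovl{\xi}\,\partial_{\ovl\xi}$; hence it permutes special Lagrangians of different phases, and the foliation I seek will consist of leaves of varying phase.

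For the ansatz I would write $\xi=|\xi|e^{i\psi}$ and look for leaves that, over a surface $\Sigma$ in the base, consist of the fibre rays $\{|\xi|\in\R^+_0\}$ with the angle $\psi$ prescribed as a function of the base point; each such $L$ is a $3$-dimensional $\R^+_0$-cone over a graph. I would then restrict $\om$ and $\Om$ to $L$. The Lagrangian condition $\om|_L=0$ splits into the base term $a(t)\,\om_{FS}|_\Sigma$, which forces $\Sigma$ to be Lagrangian, together with a fibre cross-term proportional to $d|\xi|\wedge(d\psi-\mathrm{Im}\,\partial\log h)$, which vanishes precisely when $\psi$ is pinned to the connection data of $\Sigma$. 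The special condition $\mathrm{Im}(e^{-i\theta_0}\Om)|_L=0$ then reduces to the single requirement that $\psi$ cancel the phase of $dz_1\wedge dz_2|_\Sigma$: the potentially dangerous term $i|\xi|\,d\psi\wedge(dz_1\wedge dz_2)|_\Sigma$ drops out for dimensional reasons, since a $1$-form wedged with the area form of the surface $\Sigma$ vanishes. Thus both calibration conditions collapse to the statement that $\Sigma$ is special Lagrangian in the base (in the sense induced by $\om_{FS}$ and $dz_1\wedge dz_2$), with the fibre angle determined up to an overall constant $\psi_0$; in the flat limit $\psi\equiv\psi_0$.

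It then remains to produce a $2$-parameter family of such base surfaces foliating $\C\mathbb{P}^2$ away from the line at infinity, and to combine it with the $S^1$ of fibre phases $\psi_0$ to obtain a $3$-parameter foliation of $\mathcal{O}_{\C\mathbb{P}^2}(-3)$. The leaves degenerate exactly along the zero section $\{\xi=0\}\cong\C\mathbb{P}^2$, which is the excluded set of complex codimension $1$. For the final assertion I would pass to the flat limit as the zero section shrinks: the warping functions $a,b$ tend to their Euclidean values, $\om\to\om_{\mathrm{std}}$ on $\C^3=\C^2_{z}\times\C_\xi$, the base surfaces become the affine planes $\Sigma_v=\R^2+iv$, and each leaf becomes the flat product $(\R^2+iv)\times e^{i\psi_0}\R^+_0\cong\R^+_0\times\R^2$. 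These foliate $\C^3\setminus\{\xi=0\}$ and give the stated flat special Lagrangian foliation.

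The main obstacle is the explicit construction of the base foliation. In the Euclidean model it is produced by the imaginary translations $z\mapsto z+iv$, but these are not isometries of $(\C\mathbb{P}^2,\om_{FS})$, so the required family of Lagrangian surfaces that are special with respect to $dz_1\wedge dz_2$ cannot be generated by symmetry alone and must instead be read off from the explicit coordinate expression for $\Om$; checking that they are mutually disjoint, sweep out the affine chart, and carry a single-valued phase $\psi_0$ is where the genuine work lies. By contrast, the reduction of the two calibration conditions to data on $\Sigma$ carried out in the second paragraph is essentially forced once the coordinate form of $\Om$ is in hand.
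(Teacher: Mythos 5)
There is a genuine gap: your argument reduces the theorem to producing a $2$-parameter family of surfaces $\Sigma\subset\C\mathbb{P}^2$ that are ``special Lagrangian with respect to $\om_{FS}$ and $dz_1\wedge dz_2$'', and you explicitly defer that construction (``where the genuine work lies''). That construction \emph{is} the theorem. Worse, the notion you reduce to is not well posed globally: $h^{2,0}(\C\mathbb{P}^2)=0$, so there is no holomorphic $(2,0)$-form on the base, and the transverse form $\sigma_2+i\sigma_3=X\ip\Om$ is only defined up to a phase that rotates as $e^{-3iy}$ around the fibre (this is Lemma \ref{permutingompluslemma} with $k=3$). This phase ambiguity is precisely what the paper's construction absorbs into the leaves: the foliating distribution is $S=\langle\partial_r,\partial_t,\cos(3y)\partial_{\sigma_2}-\sin(3y)\partial_{\sigma_3}\rangle$, i.e.\ the base surface itself twists with the fibre angle $y$, rather than a fixed $\Sigma$ carrying a varying phase function $\psi$. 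In your ansatz the Lagrangian condition and the special condition each pin $d\psi$ (to the Chern connection of $h$ restricted to $\Sigma$, and to $-d\arg(dz_1\wedge dz_2|_\Sigma/\vol_\Sigma)$ respectively); the compatibility of these two prescriptions is exactly the nontrivial first-order condition singling out the admissible $\Sigma$, and you neither verify it nor exhibit a single admissible $\Sigma$ outside the flat limit. The paper bypasses all of this by first deriving the closed coordinate expression $\Om^{\pm}$ in cohomogeneity-one coordinates (Proposition \ref{holomorphicvolumeformCP2}) and then checking involutivity of $S$, $\om_K|_S=0$ and $\Om^-|_S=0$ by direct computation.

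You have also misidentified the excluded set. The leaves of the paper's foliation are half-line bundles over hemispheres with $r\in[C^{1/6},\infty)$; they meet the zero section and do not intersect there. What must be removed is the preimage of the two singular orbits of the cohomogeneity-one $SU(2)$-action on $\C\mathbb{P}^2$, namely $\mathcal{O}_{\C\mathbb{P}^2}(-3)\big|_{S^2}\cong\mathcal{O}_{\C\mathbb{P}^1}(-3)$ and $\mathcal{O}_{\C\mathbb{P}^2}(-3)\big|_{pt}\cong\C$, because the hemispheres all pass through the point $t=0$ and intersect pairwise over the $S^2$ at $t=1$. Removing the zero section, as you propose, neither excludes these intersections nor matches the statement.
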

For a more precise statement see Theorem \ref{speciallagfoliation}.
We also construct abelian instantons on the canonical bundle of $S^2 \times S^2$ and on certain negative K\"ahler Einstein manifolds. For positive K\"ahler Einstein manifolds we only find instantons which have isolated conical singularities. Our results illustrate how constructing instantons depends on the K\"ahler Einstein structure of $P^6$ as well as that of $M^4$. 

\textbf{Motivation.}
The Donaldson-Uhlenbeck-Yau theorem asserts that for \textit{compact} K\"ahler manifolds, there is a correspondence between irreducible Hermitian Yang-Mills connections on the vector bundle $E$ and stable holomorphic structures on $E$ cf. \cite{Donaldson1987,UhlenbeckYau1986}. Thus, this allows one to translate the differential geometric problem of constructing Hermitian Yang-Mills connections to an algebraic geometry problem. This correspondence is however not known in general for non-compact manifolds, unless the asymptotic geometry is strongly constrained cf. \cite{Marcos2017, SaEarp2015}. %, see for instance \cite{Marcos2017, SaEarp2015} for results in the asymptotically cylindrical case. 
So in the non-compact setting a more fruitful approach has been to exploit the symmetry of the underlying space to simplify the instanton condition. 
In the presence of a cohomogeneity one action, in \cite{Oliveira2016} Oliveira studies instantons on the cotangent bundle of $S^3$ endowed with the Stenzel metric \cite{Stenzel1993}. Similar ideas were extended to construct $G_2$-instantons on the spinor bundle of $S^3$ endowed with the Bryant-Salamon \cite{Bryant1989} and Brandhuber et al. \cite{Brandhuber01} metrics in \cite{Lotay2016}, and more recently these techniques were used in \cite{Turner2022, Papoulias2021, Stein2022} to construct $G_2,Spin(7)$ and $SU(3)$ instantons respectively. Motivated by the latter works we considered the problem of constructing $SU(3)$ instantons on the canonical bundle of $\C\mathbb{P}^2$, which admits a cohomogeneity one action of $SU(3)$. Unfortunately due the scarcity of invariant connections this approach does not yield any new instantons. Thus, this led us to consider a small symmetry group namely just a $U(1)$ action. The advantage of considering a smaller symmetry group is that we can tackle the problem of constructing instantons on a much larger class of manifolds but the drawback is that the reduced equations are more complicated, so we were only able to find new solutions in the abelian case. This is nonetheless especially interesting in the case when $P^6$ is a Calabi-Yau $3$-fold in view of the SYZ conjecture \cite{SYZ1996}, which suggests that abelian instantons restricting to flat connections on special Lagrangians play an important role in mirror symmetry. Along with Hermitian Yang-Mills connections, we also consider the deformed Hermitian Yang-Mills equation introduced in \cite{Leung2000, Marino2000} which are in some sense ``mirror'' to special Lagrangians.
One key goal of this article is to derive as explicit expressions as possible which can hopefully find applications in other works. We now elaborate on the results in this paper.

\textbf{Outline of paper.}
In Section \ref{preliminaries} we recall the basic definitions and fix some conventions.
The general set up of our paper is described in Section \ref{kahlerreductionsection}.
We begin by showing how $S^1$-invariant K\"ahler structures on $P^6$ can be constructed starting from suitable data on the quotient space $M^4$, see Theorem \ref{kahlerreductiontheorem}. We then specialise this to the case when such data leads to K\"ahler Einstein metrics, see Corollary \ref{EinsteinConditionCorollary}. By solving the resulting equations under certain assumptions, we describe explicitly the K\"ahler Einstein structures on a large class of both complete and incomplete examples, see cases (\ref{calabiansatzgenral})-(\ref{EinsteinonHK}), sub-sections \ref{pqnonzeroexamples} and \ref{nonconstantsolutions}. 

In Section \ref{kahlerreductionofHYMsection} we then consider the $S^1$ reduction of the Hermitian Yang-Mills equations to $M^4$, see Theorem \ref{instantonkahlerreductiontheorem}. The resulting system of equations involves $1$-parameter families of Higgs fields $\psi$ and Hermitian connections $A$ on $M^4$. We specialise to the aforementioned K\"ahler Einstein manifolds, see Corollary \ref{S1invariantInstantonconstantU}, and find that the Higgs field $\psi$ satisfies a, generally non-linear, second order elliptic PDE. When the gauge group is abelian, the latter equation becomes linear and with a suitable ansatz we show that such solutions $\psi$ are determined by an ODE depending on the spectrum of $M^4$. The delicate problem becomes to understand whether these local solutions give rise to globally well-defined instantons on $P^6$. While in general such problems require ODE analysis and methods of dynamical systems (since the ODEs can have singularities depending on the topology of $P^6$ cf. \cite{Lotay2016, Turner2022}), we were surprisingly able to find explicit solutions in several cases.  We also consider the corresponding $S^1$ reduction of the deformed Hermitian Yang-Mills equation on the K\"ahler Einstein manifolds, see Theorem \ref{S1invariantdhymtheorem}. With a suitable ansatz we also find explicit solutions to the latter in Section \ref{dhymsection}.

In Section \ref{CP2section} we consider the case when $P^6=\mathcal{O}_{\C\mathbb{P}^2}(-3)$ is the canonical bundle of $M^4=\C\mathbb{P}^2$. Since $\mathcal{O}_{\C\mathbb{P}^2}(-3)$ can be thought as a deformation of the $\C^3$ (just as the Eguchi-Hanson space is for $\C^2$) we also consider instantons on $\C^3$.
We show that while only a subset of the spectrum of $\C\mathbb{P}^2$ yields well-defined instantons on $\mathcal{O}_{\C\mathbb{P}^2}(-3)$, all positive eigenvalues of $\C\mathbb{P}^2$ yield instantons on $\C^3$, see Theorem \ref{maintheoremcanonicalbundle}. We also observe that only one of our instantons, corresponding to the zero eigenvalue, has finite Yang-Mills energy (depending on the size of the zero section $\C\mathbb{P}^2$).
In the process of describing the $SU(3)$-structure on $\mathcal{O}_{\C\mathbb{P}^2}(-3)$, we derive a coordinate expression for the holomorphic volume form, see Proposition \ref{holomorphicvolumeformCP2}. Using this we construct a special Lagrangian foliation of $\mathcal{O}_{\C\mathbb{P}^2}(-3)$ which also descends to a foliation of $\C^3$ by flat $\R^+_0 \times \R^2$, see Theorem \ref{speciallagfoliation}. 

In Section \ref{S2S2instantonsection} we consider the case when $P^6$ is the canonical bundle of $S^2 \times S^2$ and highlight the similarities and differences with the results of Section \ref{CP2section}. We also derive a coordinate expression for the holomorphic volume form in this case (see Proposition \ref{holomorphicvolumeformS2S2}) but since the expression is more complicated we were only able to find $1$-parameter families of special Lagrangians. The latter descend to flat cones on $\mathbb{T}^2 \subset S^2 \times S^2$ in the associated Calabi-Yau cone as the zero section is collapsed to a point.

In Section \ref{T4section} we consider abelian instantons on the K\"ahler Einstein manifolds with positive and negative scalar curvature described in Section \ref{kahlerreductionsection}. Finally in Section \ref{dhymsection} we construct solutions to the deformed Hermitian Yang-Mills connections.

\noindent\textbf{Acknowledgements.}
The author would like to thank Marcos Jardim and Jason Lotay for stimulating conversations that helped shape this article. 
The author was supported by the S\~ao Paulo Research Foundation (FAPESP) [2021/07249-0].

\section{Preliminaries}\label{preliminaries}
In this section we define the basic objects of interest in this paper and fix some notations and conventions. We begin by recalling the notion of an $SU(3)$-structure.
\begin{Def}
	An $SU(3)$-structure on a $6$-manifold $P^6$ is given by a non-degenerate $2$-form $\om$, a Riemannian metric $g_\om$, an almost complex structure $J$ and a complex $3$-form $\Om:=\Om^++i\Om^-$ satisfying 
	\begin{gather} 
		\om \w \Om^\pm=0, \label{compatibilitysu3}\\
		\frac{2}{3} \om^3={\Om^+ \w \Om^-} .\label{normalisationsu3}
	\end{gather}
\end{Def}
Although an $SU(3)$-structure consists of the data $(g_\om,\om,J,\Om)$ it is in fact sufficient to specify the pair $(\om,\Om^+)$ satisfying (\ref{compatibilitysu3}) and (\ref{normalisationsu3}). This follows from an observation of Hitchin that the stabiliser of $\Om^+$ in ${GL}^+(6,\R)$ is congruent to ${SL}(3,\C)\subset {GL}(3,\C)$ and hence $\Om^+$ determines $J$ cf. \cite{Hitchin2000}. The metric is then determined by
\begin{equation}
	\om(\cdot,\cdot):=g_{\om}(J\cdot,\cdot),\label{omJandg}
\end{equation}
and $\Om^-:=J(\Om^+)=*_\om\Om^+$, where $*_\om$ is the Hodge star operator determined by $g_\om$ and volume form:
\[\vol_\om:=\frac{1}{6}\om^3=\frac{1}{4}\Om^+\w\Om^-.\]
Since $J^2=-1$ we can decompose the complexified space of $1$-forms into $+i$ and $-i$ $J$-eigenspaces which we denote by $\Lm^{1,0}$ and $\Lm^{0,1}$ respectively.
We denote by $\Lm^{p,q}$ the space of differential $(p+q)$-forms obtained by wedging $p$ elements of  $\Lm^{1,0}$ and $q$ element of $\Lm^{0,1}$. For a real function $f$ we define $d^cf(\cdot):=J \circ df(\cdot)=df (J(\cdot))$, so that $df-id^cf\in\Lm^{1,0}$. We should point out that our definition of the operator $d^c$ might differ from other conventions in the literature by a minus sign. We define the inner product on decomposable $k$-forms by
\[g(\al_1\w \cdots \w \al_k,\be_1\w \cdots \w \be_k)=\det(g(\al_i,\be_j)),\]
which might differ from other conventions in the literature by a `$k!$' factor and we denote the induced norm by $\|\cdot\|_{g}$.

We say $(P^6,\om,\Om^+)$ is a Calabi-Yau $3$-fold if both $\om$ and $\Om^+$ are covariantly constant with respect to the Levi-Civita connection $\nabla$, and hence by the above discussion so are $J$ and $\Om^-$. This condition can be equivalently formulated as:
\begin{Th}[\cite{ChiossiSalamonIntrinsicTorsion}]\label{su3torsionthm}
	$\nabla\om=0$ and $\nabla\Om^+=0$ if and only if $d\om=0$ and $d\Om^+=d\Om^-=0$.
\end{Th}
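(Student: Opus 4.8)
The plan is to treat the two implications separately. The forward direction is immediate: since the Levi-Civita connection $\nabla$ is torsion-free, the exterior derivative is the total alternation of $\nabla$ on forms, so $\nabla\om=0$ gives $d\om=0$ and $\nabla\Om^+=0$ gives $d\Om^+=0$. Because $g_\om$, $J$ and hence $\Om^-=J(\Om^+)=*_\om\Om^+$ are all determined algebraically from the pair $(\om,\Om^+)$ (as recalled in the discussion after \eqref{omJandg}), parallelism of $\om$ and $\Om^+$ forces $\nabla\Om^-=0$ and thus $d\Om^-=0$.

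The substance lies in the converse, for which I would use the intrinsic torsion of the $SU(3)$-structure. An $SU(3)$-structure is a reduction of the orthonormal frame bundle to $SU(3)\subset SO(6)$, and the obstruction to this reduction being preserved by $\nabla$ is the intrinsic torsion $\tau\in\Gamma(T^*P\otimes\mathfrak{su}(3)^{\perp})$, where $\mathfrak{su}(3)^{\perp}$ is the orthogonal complement of $\mathfrak{su}(3)$ inside $\mathfrak{so}(6)\cong\Lm^2$. By standard $G$-structure theory, $\tau$ is exactly the non-invariant part of $\nabla\om$ (equivalently of $\nabla\Om^+$), and it vanishes if and only if $\nabla$ preserves the whole structure, i.e. if and only if $\nabla\om=\nabla\Om^+=0$. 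Thus it suffices to show that $d\om=d\Om^+=d\Om^-=0$ forces $\tau=0$.

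To do this I would decompose $\tau$ into irreducible $SU(3)$-summands, namely the five Chiossi--Salamon torsion classes $W_1,\dots,W_5$, whose real dimensions $2+16+12+6+6$ sum to $\dim(T^*P\otimes\mathfrak{su}(3)^{\perp})=6\cdot 7=42$. Computing $d\om$, $d\Om^+$, $d\Om^-$ as alternations of $\tau$ acting on $\om,\Om^+,\Om^-$ then yields structure equations which, up to fixed universal constants, take the schematic form
\[
\begin{aligned}
d\om &= \tfrac{3}{2}\,\mathrm{Im}(\overline{W_1}\,\Om) + W_3 + W_4\w\om,\\
d\Om^+ &= W_1^+\,\om^2 + W_2^+\w\om + W_5\w\Om^+,\\
d\Om^- &= W_1^-\,\om^2 + W_2^-\w\om + W_5\w\Om^-.
\end{aligned}
\]
The decisive observation is that the resulting linear map $\tau\mapsto(d\om,d\Om^+,d\Om^-)$ is injective: each class lands in a distinct $SU(3)$-isotypic component of $\Lm^3\oplus\Lm^4\oplus\Lm^4$, so no cancellation between classes is possible, while every class appears with a nonzero coefficient in at least one of the three derivatives. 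Consequently the vanishing of all three exterior derivatives forces $W_1=\dots=W_5=0$, hence $\tau=0$, hence $\nabla\om=\nabla\Om^+=0$.

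The main obstacle is precisely this injectivity step: one must verify that $d$ alone detects the \emph{entire} intrinsic torsion of an $SU(3)$-structure, with no torsion component invisible to the exterior derivative. This is a genuinely special feature of $SU(3)$ in dimension six, and it is what makes the closedness conditions $d\om=d\Om^+=d\Om^-=0$ sufficient rather than merely necessary. Establishing it rigorously reduces to an explicit but somewhat involved decomposition of $\Lm^3$ and $\Lm^4$ into $SU(3)$-irreducibles and matching these against the summands of $T^*P\otimes\mathfrak{su}(3)^{\perp}$; this is the computational heart of the argument and the point I would expect to require the most care.
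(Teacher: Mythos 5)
Your sketch is correct and is precisely the intrinsic-torsion argument of Chiossi--Salamon that the paper cites for this statement without reproducing a proof: the forward direction via torsion-freeness of $\nabla$, the identification of $\tau\in\Gamma(T^*P\otimes\mathfrak{su}(3)^{\perp})$ with the obstruction to $\nabla\om=\nabla\Om^+=0$, the dimension count $2+16+12+6+6=42$, and the injectivity of $\tau\mapsto(d\om,d\Om^+,d\Om^-)$ are all as in that reference. The only part left unverified is the isotypic-component matching that establishes injectivity, which you correctly flag as the computational heart and which is exactly the content of the cited work, so there is nothing to add.
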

Note that by contrast to a Calabi-Yau manifold, a K\"ahler manifold only admits \textit{locally} a complex volume form $\Om$ and rather than being closed it satisfies the weaker condition that $d\Om\in \Lm^{3,1}.$ Henceforth $P^6$ will always be a K\"ahler manifold, and we shall emphasise when we further require $P^6$ to be Calabi-Yau manifold. Next we recall the notion of a Hermitian Yang-Mills connection.
\begin{Def}
	Given a Hermitian vector bundle $E$ over a K\"ahler manifold $P^6$ with a connection $\underline{A}$ (viewed as a matrix of $1$-form), we say that $\underline{A}$ is a Hermitian Yang-Mills connection it satisfies
	\begin{gather}
		F_{\underline{A}} \in \Lm^{1,1}(P^6),\label{hym11}\\
		F_{\underline{A}} \w \om^2 = -2C_0 \vol_\om,\label{hym2}
	\end{gather}
	where $F_{\underline{A}}:=d\underline{A} + \underline{A}\w \underline{A}$ denotes the curvature form of $\underline{A}$ and $C_0$ is a constant. When $C_0=0$ we shall call such connections $SU(3)$-instantons, or simply instantons, cf. \cite{Carrion1998}.
\end{Def}
If $P^6$ is a Calabi-Yau $3$-fold then (\ref{hym11}) can equivalently be expressed as
\begin{equation}
	F_{\underline{A}} \w \Om^+ =0.\label{hym1}
\end{equation}
%The Donaldson-Uhlenbeck-Yau theorem asserts that for \textit{compact} K\"ahler manifolds, there is a correspondence between irreducible Hermitian Yang-Mills connections on the vector bundle $E$ and stable holomorphic structures on $E$ cf. \cite{Donaldson1987,UhlenbeckYau1986}. Thus, this allows one to translate the differential geometric problem of constructing Hermitian Yang-Mills connections to an algebro-geometric problem. This correspondence is not known in general for non-compact %(or conically singular) K\"ahler manifolds which are the main focus of this paper. There are some known results in the asymptotically cylindrical cases cf. \cite{Marcos2017, SaEarp2015} but these won't be relevant in our context [[maybe move this to introduction]]. 
It is worth pointing out that when $P^6$ is compact, the constant $C_0$ essentially corresponds to the slope of the vector bundle $E$. Next we now recall the notion of a deformed Hermitian Yang-Mills connection cf. \cite{Leung2000, Lotay2020}:
\begin{Def}\label{dhymdefinition}
Given a Hermitian \textit{line} bundle $L$ over a K\"ahler manifold $P^6$ with a connection $\underline{A}$, we say that $\underline{A}$ is a deformed Hermitian Yang-Mills connection it satisfies (\ref{hym11}) and
\begin{equation}
	\frac{1}{6}F_{\underline{A}}^3 - F_{\underline{A}} \w \frac{1}{2}\om^2 =0\label{dhym}.
\end{equation}
\end{Def}
We should emphasise that here we are viewing $F_{\underline{A}}$ as a \textit{real} $2$-form by identifying $\mathfrak{u}(1)=i\R \cong \R$ which explains the minus sign in (\ref{dhym}) compared to \cite[Def. 3.1]{Lotay2020}. Observe also that (\ref{dhym}) is a non-linear equation but it does have a $\mathbb{Z}_2$-symmetry i.e. if $F_{\underline{A}}$ is a solution then so is $-F_{\underline{A}}$.
Finally we recall the definition of special Lagrangian submanifolds.

Calabi-Yau manifolds have a natural class of calibrated, and hence minimal, submanifolds called special Lagrangians defined as follows:
\begin{Def}
	A $3$-dimensional submanifold $L^3$ of $P^6$ is called a special Lagrangian if it is Lagrangian i.e. $\om\big|_L=0$ and furthermore $\Om^-\big|_L=0$, or equivalently $\Om^+$ calibrates $L^3$ i.e. $\Om^+\big|_L=\vol_L$.
\end{Def}
Aside from special Lagrangians there are also other examples of calibrated submanifolds, namely holomophic curves and surfaces which are instead calibrated by $\om$ and $\om^2/2$ respectively.
We refer the reader to \cite{HarveyLawson, Joycebook} for basic facts about general calibrated submanifolds and to \cite[Chap. 9]{Joycebook} for the (expected) role of special Lagrangians in mirror symmetry. Having now defined the geometrical structures of interest we next move on to describe the general set up of this paper. 
\section{$S^1$-invariant K{\"a}hler structures}\label{kahlerreductionsection}
The goal of this section is to describe, as explicitly as possible, all local $S^1$-invariant K\"ahler  metrics on $6$-manifolds (see Theorem \ref{kahlerreductiontheorem}) upon which we shall consider the Hermitian Yang-Mills equations. We shall also show how several well-known K\"ahler Einstein metrics fit into this framework (see sub-section \ref{constantsolutionsection}). 

\subsection{K\"ahler reduction of a Calabi-Yau $3$-fold}
Let $P^6$ be a Calabi-Yau $3$-fold whose $SU(3)$-structure is determined by the data $(g_\om,J,\om,\Om)$ and suppose furthermore that it admits a holomorphic Killing vector field $X$ i.e. 
\begin{equation*}
	\mathcal{L}_X\om=0, \ \ \mathcal{L}_XJ=0 \ \ \text{and}\ \ \mathcal{L}_Xg_\om=0.
\end{equation*}
Of course from (\ref{omJandg}) we know that any two of the above implies the third.
However we do \textit{not} assume that $X$ preserves $\Om$; but our hypothesis already implies that: %we always have:
\begin{Lemma}\label{permutingompluslemma}
	If $X$ is a holomorphic Killing vector field on $P^6$ then 
	\[\mathcal{L}_X\Om^\pm =\pm k\Om^\mp\]
	for some real constant $k$.	
\end{Lemma}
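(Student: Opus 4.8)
The plan is to exploit the fact that on a Calabi--Yau $3$-fold the canonical bundle $\Lm^{3,0}$ is a \emph{complex line bundle} trivialised by the nowhere-vanishing form $\Om$, so that $\mathcal{L}_X\Om$ must be a functional multiple of $\Om$, and then to pin down that function using the remaining structure equations. First I would observe that since $X$ is holomorphic ($\mathcal{L}_XJ=0$), the flow of $X$ preserves the almost complex structure and hence the bidegree decomposition; therefore $\mathcal{L}_X$ maps $\Lm^{3,0}$ to itself. As $\Om$ spans $\Lm^{3,0}$ pointwise, there is a smooth complex-valued function $f=a+ib$ with $\mathcal{L}_X\Om=f\,\Om$, which in real and imaginary parts reads $\mathcal{L}_X\Om^+=a\Om^+-b\Om^-$ and $\mathcal{L}_X\Om^-=b\Om^++a\Om^-$.

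Next I would show $a\equiv 0$ using $\mathcal{L}_X\om=0$ together with the normalisation (\ref{normalisationsu3}). On one hand $\mathcal{L}_X(\Om^+\w\Om^-)=\tfrac{2}{3}\mathcal{L}_X\om^3=0$; on the other hand, expanding the left-hand side and using $\Om^+\w\Om^+=\Om^-\w\Om^-=0$ (these are squares of $3$-forms) gives $\mathcal{L}_X(\Om^+\w\Om^-)=2a\,\Om^+\w\Om^-$. Since $\Om^+\w\Om^-$ is a nonzero multiple of the volume form it is nowhere vanishing, forcing $a=0$. Thus $f=ib$ and $\mathcal{L}_X\Om^+=-b\Om^-$, $\mathcal{L}_X\Om^-=b\Om^+$.

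It then remains to prove that $b$ is constant, and for this I would use that $P^6$ is Calabi--Yau, so $d\Om^+=d\Om^-=0$ by Theorem \ref{su3torsionthm}. By Cartan's formula $\mathcal{L}_X\Om^+=d(\iota_X\Om^+)+\iota_X d\Om^+=d(\iota_X\Om^+)$ is exact, hence closed; differentiating $\mathcal{L}_X\Om^+=-b\Om^-$ and using $d\Om^-=0$ yields $db\w\Om^-=0$. The crux of the argument is that wedging with $\Om^-$ is injective on $1$-forms: decomposing $\alpha=\alpha^{1,0}+\alpha^{0,1}$ and writing $\Om^-=\tfrac{1}{2i}(\Om-\overline{\Om})$, the surviving products $\alpha^{0,1}\w\Om$ and $\alpha^{1,0}\w\overline{\Om}$ are of distinct types $(3,1)$ and $(1,3)$, and in complex dimension three each vanishes only when the corresponding component does. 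Hence $db=0$, so $b$ is a real constant on the (connected) manifold; setting $k=-b$ gives $\mathcal{L}_X\Om^\pm=\pm k\,\Om^\mp$. I expect this final injectivity step --- verifying that the structure equations genuinely rigidify $b$ rather than merely constrain it pointwise --- to be the main obstacle, while the earlier reductions are formal.
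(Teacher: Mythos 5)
Your proof is correct and follows essentially the same route as the paper: both start from the observation that $\mathcal{L}_X$ preserves the bidegree so $\mathcal{L}_X\Om=f\Om$, then use closedness of $\Om$ to force $df=0$ and the $SU(3)$ normalisation to force $f$ purely imaginary (the paper uses $g_\om(\Om^\pm,\Om^\pm)=4$ where you use $\tfrac{2}{3}\om^3=\Om^+\w\Om^-$, which are equivalent). If anything your constancy step is slightly more complete, since wedging with $\Om^-$ is genuinely injective on $1$-forms, whereas the paper's $dk\w\Om=0$ by itself only gives $\bar\partial k=0$ and needs to be combined with the reality of $k$ to conclude.
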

\begin{proof}
	Since $X$ preserves $J$, for any $(1,0)$ form $\alpha$  we have that
	\[J(\mathcal{L}_X\alpha)=\mathcal{L}_X(J\alpha)=i\mathcal{L}_X(\alpha)\]
	and hence it follows that $\mathcal{L}_X\alpha$ is also of type $(1,0)$. So
	in general 
	\begin{equation}
		\mathcal{L}_X\Om = -ki \Om
	\end{equation} 
	for some complex function $k:P^6\to \C$. Applying the exterior derivative to both sides we have that
	\[dk \w \Om =0\]
	and hence we see that $k$ is necessarily constant. Furthermore applying $\mathcal{L}_X$ to $g_\om(\Om^\pm,\Om^\pm)=4$ we deduce that $k\in\R$ and this concludes the proof. 
\end{proof}
\begin{Rem}
	Note that by rescaling the vector field $X$ we can always set $k=0$ or $1$ i.e. the $S^1$ action  generated by $X$ either preserves $\Om$ or corresponds to multiplication by $i$.
\end{Rem}
We shall now consider the K\"ahler reduction of $P^6$ by the $S^1$ action. Working away from the vanishing locus of $X$ i.e. the fixed point set of the $S^1$-action, we can define a positive function 
$$u:=g_\om(X,X)^{-1/2}$$
and a canonical Riemannian connection $1$-form by
$$\Theta:=u^2 g_\om(X,\cdot).$$ 
Note that by construction both $u$ and $\Theta$ are invariant by $X$. Locally on $P^6$ %(away from the fixed point set of $X$) 
we can define a moment map $H$, i.e. a Hamiltonian function, by
\[dH=X\ip \om.\]
The data of the $SU(3)$-structure on $P^6$ can now be expressed as
\begin{gather}
	\om = \Theta \w dH + \tilde{\om}_1, \label{kahlerform}\\
	g_\om=u^{-2}\Theta^2+u^2dH^2+{g}_{\tilde{\om}_1},\label{kahlermetric}\\
	\Om = (u^{-1}\Theta+iudH) \w (\tilde{\om}_2+i\tilde{\om}_3),
\end{gather}
where $\tilde{\om}_i$ satisfy the $SU(2)$ relations: $\tilde{\om}_i \w \tilde{\om}_j = \delta_{ij}\tilde{\om}_1 \w \tilde{\om}_1$. We should emphasise that $u,\Theta,$ and $\tilde{\om}_i$ all depend on $H$. For regular values of $H$, we can define the K\"ahler quotient $M^4:=P^6\!/\!\!/\! S^1$ cf. \cite{HKLR}. The K\"ahler structure on $M^4$ is determined by the pair $(\tilde{g}_{\tilde{\om}_1},\tilde{\om}_1)$ for each fixed value of $H$, or put differently, we have a $1$-parameter family of K\"ahler structure on $M^4$ depending on $H$. Note however that induced complex structure on $M^4$ is always the same; it is the symplectic form, or equivalently by (\ref{omJandg}) the metric, which varies with $H$. 

We shall now show how to invert this reduction and recover the K\"ahler structure of $P^6$ (at least on an open set) starting from the data $(u,\tilde{\om}_1,\tilde{g}_{\tilde{\om}_1})$ on $M^4$.
\begin{Th}\label{kahlerreductiontheorem}
	Let $(M^4,J_1)$ be a complex $4$-manifold with a $1$-parameter family of K\"ahler form $\tilde{\om}_1(H)$ and a $1$-parameter family of positive function $u(H)$ satisfying 
	\begin{equation}
		\frac{\partial^2}{\partial H^2} (\tilde{\om}_1) = d_M d_M^c(u^2),\label{kahlerPDE}
	\end{equation}
	where $d_M$ denotes the exterior derivative on $M^4$ (i.e. we do not differentiate with respect to $H$) and $d_M^c:=J_1 \circ d_M$. Then locally we can define a connection $1$-form $\Theta$ on an $S^1$ bundle $Q^5 \to M^4$ whose curvature is given by
	\begin{equation}
		d\Theta=-\frac{\partial}{\partial H} \tilde{\om}_1+d_M^c(u^2)\w dH,\label{curvaturekahlerreduction}
	\end{equation}
	and $P^6:=\R_H \times Q^5$ admits a K\"ahler structure given by (\ref{kahlerform}) and (\ref{kahlermetric}).
%	\begin{gather}
%		\om := \Theta \w dH + \tilde{\om}_1,\label{kahlerform}\\
%		g_\om:=u^{-2}\Theta^2 + u^2 dH^2+g_{\tilde{\om}_1}.\label{kahlermetric}
%	\end{gather}
	If furthermore $M^4$ admits a holomorphic $(2,0)$ form $\sigma_2+i\sigma_3$ such that 
	\begin{equation}
		u^2 (\sigma_2+i\sigma_3)\w (\sigma_2-i\sigma_3) = 2 (\tilde{\om}_1 \w \tilde{\om}_1)\label{uandomtilde}
	\end{equation}
	then we can define an $S^1$-invariant holomorphic $(3,0)$ form on $P^6$ by
	\begin{equation}
		\Om:=(\Theta + iu^2 dH) \w (\sigma_2+i\sigma_3)
	\end{equation}
	and hence $g_\om$ is a Calabi-Yau metric.
\end{Th}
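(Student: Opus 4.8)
\emph{Proof proposal.} The plan is to build the structure on $P^6=\R_H\times Q^5$ from the bottom up: first produce the connection $\Theta$ by a Poincar\'e-lemma argument, then verify that the pair $(\om,g_\om)$ defined by (\ref{kahlerform})--(\ref{kahlermetric}) is K\"ahler, and finally, under the extra hypothesis, that $\Om$ is a closed holomorphic volume form. Throughout I split the exterior derivative on $S^1$-invariant forms as $d=d_M+dH\w\partial_H$, and the PDE (\ref{kahlerPDE}) will enter precisely as the integrability condition making $\Theta$ exist and forcing the almost complex structure to be integrable.

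First I would establish the connection. The right-hand side $F:=-\partial_H\tilde{\om}_1+d_M^c(u^2)\w dH$ of (\ref{curvaturekahlerreduction}) is closed: since $\tilde{\om}_1(H)$ is K\"ahler on $M^4$ we have $d_M\tilde{\om}_1=0$, so $dF=-dH\w\partial_H^2\tilde{\om}_1+d_Md_M^c(u^2)\w dH$, which vanishes exactly by (\ref{kahlerPDE}). Hence $F$ is locally exact and is the curvature of a connection $\Theta$ on an $S^1$-bundle $Q^5\to M^4$; working locally lets me bypass the integrality of the periods of $F$. With $d\Theta=F$ in hand, closedness of $\om=\Theta\w dH+\tilde{\om}_1$ is immediate, since $d(\Theta\w dH)=F\w dH=-\partial_H\tilde{\om}_1\w dH$ cancels $d\tilde{\om}_1=dH\w\partial_H\tilde{\om}_1$. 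Non-degeneracy of $\om$ and positivity of $g_\om$ in (\ref{kahlermetric}) are clear because $u>0$ and $g_{\tilde{\om}_1}$ is Riemannian, so $\om^3$ is a nonzero multiple of $\Theta\w dH\w\tilde{\om}_1^2$.

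Next I would prove that $(P^6,\om,g_\om)$ is K\"ahler. I define $J$ by declaring the $(1,0)$-coframe to be $\zeta:=u^{-1}\Theta+iu\,dH$ together with the pullback of $\Lm^{1,0}(M^4)$; one checks $\tfrac{i}{2}\zeta\w\bar\zeta=\Theta\w dH$ and $\zeta\odot\bar\zeta=u^{-2}\Theta^2+u^2\,dH^2$, so $\om$ is of type $(1,1)$, the metric recovers (\ref{kahlermetric}), and (\ref{omJandg}) holds. The substantive point, and the step I expect to be the main obstacle, is integrability of $J$, i.e. that $d\alpha$ has no $\Lm^{0,2}$ component for every $(1,0)$-form $\alpha$. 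For $\alpha$ pulled back from $M^4$ this follows from integrability of $J_1$ (chosen $H$-independently). For $\alpha=\zeta$ one expands $d\zeta$ using $d\Theta=F$, rewrites $\Theta,dH$ in terms of $\zeta,\bar\zeta$, and collects the terms proportional to $\bar\zeta\w\bar\partial u$: the contributions from $d_Mu$, from the $d_M^c(u^2)\w dH$ part of the curvature, and from $i\,du\w dH$ cancel identically. This cancellation is exactly why the curvature must take the form (\ref{curvaturekahlerreduction}). Together with $d\om=0$ this gives the K\"ahler condition.

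Finally, for the Calabi-Yau extension I would take the holomorphic $(2,0)$-form $\beta:=\sigma_2+i\sigma_3$ (which, $J_1$ being $H$-independent, may be taken $H$-independent and hence $d_M$-closed) and set $\Om=(\Theta+iu^2dH)\w\beta=u\,\zeta\w\beta$, a form of type $(3,0)$. The $SU(3)$ compatibility (\ref{compatibilitysu3}) is automatic for type reasons, since $\om\w\Om$ is of type $(4,1)$ on a threefold, while the normalisation (\ref{normalisationsu3}) reduces, after writing $\om^3$ and $\Om^+\w\Om^-=\tfrac{i}{2}\Om\w\bar\Om$ in the coframe $\{\zeta,\beta\}$, precisely to the hypothesis (\ref{uandomtilde}). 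It remains to show $d\Om=0$: computing $d(\Theta+iu^2dH)=-\partial_H\tilde{\om}_1+i\,\eta\w dH$ with $\eta\in\Lm^{1,0}(M^4)$, each surviving term wedged with the $(2,0)$-form $\beta$ acquires holomorphic $M^4$-degree at least $3$ and hence vanishes on the complex surface $M^4$, while $d\beta=0$. Thus $d\om=d\Om=0$, and Theorem \ref{su3torsionthm} gives $\nabla\om=\nabla\Om^+=0$, i.e. $g_\om$ is Calabi-Yau.
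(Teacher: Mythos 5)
Your proof is correct, but it runs in the opposite direction to the paper's. The paper argues by reduction: starting from an $S^1$-invariant K\"ahler (resp.\ Calabi--Yau) structure on $P^6$ it splits $d\Theta=d_M\Theta-\partial_H(\Theta)\w dH$, reads off $d_M\Theta=-\partial_H\tilde{\om}_1$ and $d_M\tilde{\om}_1=0$ from $d\om=0$, extracts $\partial_H\Theta=-d_M^c(u^2)$ from $0=d\Om=d(\Theta+iu^2dH)\w(X\ip\Om)$, and combines these into (\ref{kahlerPDE}); the sufficiency of the data is then left implicit in the reversibility of these computations (the Remark after the theorem identifies (\ref{dmtheta})--(\ref{dmom}) with $d\om=0$ and (\ref{dhtheta}) with integrability of the fibrewise complex structure). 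You instead verify the forward direction head-on: closedness of the prescribed curvature $2$-form (which is exactly (\ref{kahlerPDE})) so that $\Theta$ exists locally, then $d\om=0$, then integrability of the almost complex structure with $(1,0)$-coframe generated by $\zeta=u^{-1}\Theta+iu\,dH$ and $\Lm^{1,0}(M^4)$ --- your cancellation in the $(0,2)$-part of $d\zeta$ is the one genuinely nontrivial check, and it does work out: the three contributions proportional to $\bar{\partial}u\w\bar{\zeta}$ carry coefficients $-\tfrac{1}{2u}$, $+\tfrac{1}{u}$, $-\tfrac{1}{2u}$ and sum to zero --- and finally $d\Om=0$ and (\ref{normalisationsu3}) by degree reasons on the complex surface $M^4$ together with (\ref{uandomtilde}). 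Your route makes the ``if'' direction of the statement airtight and pinpoints exactly where each hypothesis enters, at the cost of not showing (as the paper's computation simultaneously does) that every $S^1$-invariant K\"ahler structure arises this way, and of not exhibiting $\sigma_2+i\sigma_3$ intrinsically as $X\ip\Om$ together with its weight $k$ under $\mathcal{L}_X$, which the paper uses later.
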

\begin{proof}
	To prove the theorem it suffices to show that the quotient K\"ahler structure for $P^6 \to M^4$ indeed satisfies the hypothesis of the theorem. Since $\Theta$ is invariant under $X$, we can write $d\Theta=d_M \Theta -\partial_H(\Theta) \w dH $ (since $X\ip d\Theta=0$ there are no $\Theta$-terms); here $\partial_H(\Theta)$ denotes an $H$-dependent $1$-form on $M^4$. Since $d\om=0$ it follows that
	\begin{gather}
		d_M \Theta=-\partial_H(\tilde{\om}_1),\label{dmtheta}\\
		d_M\tilde{\om}_1=0.\label{dmom}
	\end{gather}
	The latter equation corresponds to the fact that $\tilde{\om}_1$ descends to a K\"ahler form on $M^4$ cf. \cite{HKLR}. Let us now define a $(2,0)$ form by $\sigma_2+i\sigma_3:=X\ip \Om$ then we have that
	\begin{align*}
		d(\sigma_2+i\sigma_3)&=d( X\ip \Om)\\ 
		&= \mathcal{L}_X\Om\\
		&=-ki\Om.
	\end{align*}
	It follows that
	\begin{align*}
		0 =d \Om 
		%&= d(\theta+iu^2dH )\w (u^{-1}(\tilde{\om}_2+i\tilde{\om}_3))-(\theta+iu^2dH )\w d(u^{-1}(\tilde{\om}_2+i\tilde{\om}_3))\\
		&= d(\Theta+iu^2dH )\w ( X\ip \Om)-(\Theta+iu^2dH )\w d( X\ip \Om)\\
		&=d(\Theta+iu^2dH )\w (\sigma_2+i\sigma_3)
	\end{align*}
	and from this we deduce that $d_M\Theta \in \Lm^{1,1}(M^4)$ and
	\begin{equation}
		\partial_H(\Theta)=-d^c_M(u^2). \label{dhtheta}
	\end{equation}
	Combining the latter and (\ref{dmtheta}) gives (\ref{kahlerPDE}) and this concludes the proof of the first part of the theorem. If we now set $k=0$ then $\sigma_2+i\sigma_3$ descends to a closed and hence \textit{holomorphic} $(2,0)$ form on $M^4$ and the second part of the theorem follows immediately from this and using (\ref{normalisationsu3}).
\end{proof}
\begin{Rem}\
\begin{enumerate}
\item Note that all $S^1$ invariant K\"ahler metrics on $6$-manifolds arise from the first part of Theorem \ref{kahlerreductiontheorem} i.e. we did not require $P^6$ to be a Calabi-Yau manifold in the proof of the theorem. The conditions (\ref{dmtheta}) and (\ref{dmom}) arise from the requirement that $d\om=0$ and condition (\ref{dhtheta}) arises from the requirement 
%that $\Theta +iu^2dH$ is a vertical $(1,0)$ form and 
that the almost complex structure on the fibre is indeed integrable i.e. $d(\Theta +iu^2dH) \in \Lm^{2,0}\oplus\Lm^{1,1}.$ %We only used the Calabi-Yau assumption in the proof because this is the primarily focus of this paper.
\item It is worth emphasising that Theorem \ref{kahlerreductiontheorem} is \textit{not} asserting that $g_\om$ is a Calabi-Yau metric if and only if $M^4$ admits a holomorphic $(2,0)$ form. It is merely saying that that $g_\om$ is a Calabi-Yau metric AND $P^6$ admits an $S^1$-invariant holomorphic volume form if and only if $M^4$ admits a holomorphic $(2,0)$ form $\sigma_2+i\sigma_3$, and as we saw above this only occurs if $k=0$. We shall see examples below when $g_\om$ is indeed a Calabi-Yau metric but $k\neq 0$. Irrespectively however, if $P^6$ is a Calabi-Yau manifold (not just K\"ahler) then we always have that $\sigma_2+i\sigma_3$ defines a transverse holomorphic $(2,0)$ form on $P^6\to M^4$ such that $\mathcal{L}_X(\sigma_2+i\sigma_3)=-ki(\sigma_2+i\sigma_3)$.
\end{enumerate}
\end{Rem}
Theorem \ref{kahlerreductiontheorem} only produces \textit{local} examples of K\"ahler metrics; the issue of completeness of $g_\om$ is more subtle and it is related to the zero locus of the vector field $X$ on $P^6$. Since in general we cannot characterise the Calabi-Yau condition on general $P^6$ from the above theorem, we shall next investigate the condition when the induced K\"ahler structure is Einstein.
\subsection{The Ricci form and the Einstein condition}
%Note that the first part of Theorem \ref{kahlerreductiontheorem} only yields an $S^1$-invariant K\"ahler metric on $P^6$, which need not be Calabi-Yau. 
We now compute the Ricci form of the metric $g_\om$ defined by (\ref{kahlermetric}) in terms of the data on $M^4$. Using this we shall then characterise all $S^1$-invariant K\"ahler Einstein metrics on $6$-manifolds. 
\begin{Prop}\label{ricciformproposition}
	The Ricci form of $g_\om$, as defined by (\ref{kahlermetric}), is given in terms of the data on $M^4$ by 
	\begin{align*}
		-2 \Ric(g_\om) =\ &d_M d_M^c (\log (u^2 \tilde{\rho})) - u^{-4}\tilde{\rho}^{-1}\partial_H(u^2\tilde{\rho} )\cdot \partial_H(\tilde{\om}_1)  \\
		&+ d_M(u^{-4}\tilde{\rho}^{-1}\partial_H(u^2\tilde{\rho})) \w \Theta\\
		&+ u^2 dH \w  d_M^c(u^{-4}\tilde{\rho}^{-1}\partial_H(u^2\tilde{\rho}))\\
		&+ \partial_H(u^{-4}\tilde{\rho}^{-1}\partial_H(u^2\tilde{\rho}))\cdot dH \w \Theta
	\end{align*}
	where $\tilde{\rho}=\tilde{\rho}(H)$ denotes the exponential of the Ricci potential of $g_{\tilde{\om}_1}$ i.e. $$\Ric(g_{\tilde{\om}_1})=-\frac{1}{2}d_Md_M^c(\log \tilde{\rho}).$$ In particular, the scalar curvature of $g_{\om}$ is given by
	\[\partial_H(u^{-4}\tilde{\rho}^{-1}\partial_H(u^2\tilde{\rho}))-2\Big(d_M d_M^c (\log (u^2 \tilde{\rho})) - u^{-4}\tilde{\rho}^{-1}\partial_H(u^2\tilde{\rho} ) \partial_H(\tilde{\om}_1)\Big)_{\tilde{\om}_1},\]
	where the notation $(\al)_{\tilde{\om}_1}$ denotes the $\tilde{\om}_1$-component of an arbitrary $2$-form $\al$ on $M^4$ i.e. $(\al)_{\tilde{\om}_1}:=\frac{1}{2}g_{\tilde{\om}_1}(\alpha, \tilde{\om}_1)$. 
\end{Prop}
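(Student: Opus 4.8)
The plan is to compute $\Ric(g_\om)$ as the curvature of the canonical bundle of $P^6$, by exhibiting a local holomorphic volume form, and then to rewrite the resulting expression --- which a priori involves the full exterior derivative on $P^6$ --- in terms of the base operators $d_M,d_M^c$ together with $\partial_H$-derivatives.

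First I would produce a local holomorphic $(3,0)$-form. Since the complex structure $J_1$ on $M^4$ does not depend on $H$, I may choose local holomorphic coordinates $w_1,w_2$ on $(M^4,J_1)$ and set $\beta:=dw_1\w dw_2$, so that $d\beta=0$ and $\beta$ is $H$-independent. Writing $\eta:=\Theta+iu^2dH$ for the fibre $(1,0)$-form, I claim $\Psi:=\eta\w\beta$ is closed, hence a local holomorphic volume form. Indeed $d\Psi=d\eta\w\beta$, and combining (\ref{curvaturekahlerreduction}) with $d(u^2dH)=d_Mu^2\w dH$ gives $d\eta=d_M\Theta+(d_M^cu^2+i\,d_Mu^2)\w dH$; here $d_M\Theta$ is of type $(1,1)$ on $M^4$ while $d_M^cu^2+i\,d_Mu^2=i(d_Mu^2-i\,d_M^cu^2)$ is of type $(1,0)$ on $M^4$ by the convention $d_Mf-i\,d_M^cf\in\Lm^{1,0}$. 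Wedging either with the $(2,0)$-form $\beta$ produces a term whose $M^4$-component has holomorphic degree three, which vanishes on the complex surface $M^4$. This is precisely the integrability computation already carried out in the proof of Theorem \ref{kahlerreductiontheorem}.

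Next I would invoke the identity expressing the Ricci form of a K\"ahler metric through its canonical bundle: in the paper's conventions, $-2\Ric(g_\om)=dd^c\log\!\big(\Psi\w\bar\Psi/\vol_\om\big)$ for any local holomorphic volume form $\Psi$. Applied to the base metric $g_{\tilde\om_1}$ with the holomorphic $(2,0)$-form $\beta$, the same identity reads $-2\Ric(g_{\tilde\om_1})=d_Md_M^c\log(\beta\w\bar\beta/\vol_{\tilde\om_1})$, so comparison with the defining relation of $\tilde\rho$ yields $\beta\w\bar\beta\propto\tilde\rho\,\vol_{\tilde\om_1}$ with an $H$-independent constant. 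Using this together with $\vol_\om=\Theta\w dH\w\vol_{\tilde\om_1}$ (immediate from (\ref{kahlerform}), since $\tilde\om_1^3=0$) and the elementary identity $\eta\w\bar\eta=-2iu^2\,\Theta\w dH$, I obtain $\Psi\w\bar\Psi\propto u^2\tilde\rho\,\vol_\om$, whence $-2\Ric(g_\om)=dd^c\log(u^2\tilde\rho)$. The local ambiguity in $\beta$ is pluriharmonic and is annihilated by $dd^c$, so the answer depends only on the globally defined $\tilde\rho$.

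The remaining, and main, task is to expand the operator $dd^c$ on $P^6$ acting on a function $f=f(H)$ of the base and $H$. From $\eta$ being of type $(1,0)$ one reads off $J\,dH=u^{-2}\Theta$ and $J\Theta=-u^2dH$, so that $d^cf=d_M^cf+u^{-2}(\partial_Hf)\,\Theta$. Applying $d$, writing $d\alpha=d_M\alpha-\partial_H\alpha\w dH$ for $H$-dependent base forms, and substituting $d\Theta=-\partial_H\tilde\om_1+d_M^c(u^2)\w dH$ from (\ref{curvaturekahlerreduction}), the result splits into four term-types: a pure base $2$-form $d_Md_M^cf-u^{-2}(\partial_Hf)\,\partial_H\tilde\om_1$, a term $d_M(u^{-2}\partial_Hf)\w\Theta$, a $dH$-term which (using that $d_M^c$ commutes with $\partial_H$ and the identity $u^2d_M^c(u^{-2}\partial_Hf)=d_M^c(\partial_Hf)-u^{-2}\partial_Hf\,d_M^cu^2$) collects into $u^2\,dH\w d_M^c(u^{-2}\partial_Hf)$, and a coefficient of $dH\w\Theta$ equal to $\pm\partial_H(u^{-2}\partial_Hf)$. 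Setting $f=\log(u^2\tilde\rho)$ and noting the recurring combination $u^{-2}\partial_Hf=u^{-4}\tilde\rho^{-1}\partial_H(u^2\tilde\rho)$ reproduces each line of the stated formula. Finally the scalar curvature is the trace $\tr_{\om}$ of the Ricci form: pairing $-2\Ric(g_\om)$ with $\om=\Theta\w dH+\tilde\om_1$ extracts the $\tilde\om_1$-component of the base part and the $dH\w\Theta$-coefficient, giving the displayed expression. The delicate point throughout is the clean separation of $d$ into its $M^4$- and $H$-differentiation parts and the correct regrouping of the mixed $dH$- and $\Theta$-terms; the sign conventions (the paper's nonstandard $d^c$ and the normalisation of $\tilde\rho$) must be tracked consistently to land on the stated signs, in particular the sign of the final $dH\w\Theta$ coefficient.
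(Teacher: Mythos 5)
Your proposal is correct and follows essentially the same route as the paper: both produce the local holomorphic volume form $(\Theta+iu^2dH)\wedge dz_1\wedge dz_2$, apply the Ricci-potential identity to get $-2\Ric(g_\om)=dd^c\log(u^2\tilde\rho)$, and then expand $dd^c$ into base and $H$-derivative parts (a step the paper compresses into ``the result follows by simplifying this expression,'' which you carry out explicitly and correctly).
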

\begin{proof}
For a general K\"ahler $2n$-manifold the Ricci form is given by
\[\Ric(g_\om)=-\frac{1}{2} dd^c(\log \|\Upsilon\|^2_{g_{\om}}),\]
where $\Upsilon$ is a holomorphic $(n,0)$ form cf. \cite{KobayashiNomizu2}. 
On $P^6$, as given by Theorem \ref{kahlerreductiontheorem}, we can take
\[\Upsilon = (\Theta+iu^2dH)\w (dz_1 \w dz_2),\]
where $z_1,z_2$ denote local complex coordinates on $(M^4,J_1)$ so that % (these are independent of $\tilde{\om}_1$!). 
$\tilde{\rho}:=\|dz_1 \w dz_2\|^2_{g_{\tilde{\om}_1}}$. Then we have that 
\[\Ric(g_\om)=-\frac{1}{2} dd^c(\log (u^2 \tilde{\rho}))\]
and the result follows by simplifying this expression. 
\end{proof}
%Note that the scalar curvature of $g_\om$ can be derived from the above expression by contracting with $\om$. 
We can now easily deduce the Einstein condition from Proposition \ref{ricciformproposition}:
\begin{Cor}\label{EinsteinConditionCorollary}
	The metric $g_\om$ as defined by (\ref{kahlermetric}) is Einstein i.e. $-2 \Ric(g_{\om})= \lambda \om$ if and only if
	\begin{gather}
		u^{-4}\tilde{\rho}^{-1}\partial_H(u^2 \tilde{\rho})= -C -\lambda H,\label{EinsteinCondition1}\\
		d_M d_M^c(\log (u^2 \tilde{\rho}))+(\lambda H +C) \partial_H(\tilde{\om}_1)=\lambda \tilde{\om}_1,\label{EinsteinCondition2}
	\end{gather}
	where $C$ is a constant.
\end{Cor}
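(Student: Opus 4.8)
The plan is to read off the two conditions directly from the expression for $-2\Ric(g_\om)$ in Proposition \ref{ricciformproposition}, by comparing it component-by-component with $\lambda\om$. Since Proposition \ref{ricciformproposition} already performs all of the analytic work, the Corollary is essentially a bookkeeping exercise in the splitting of $2$-forms on $P^6$. First I would abbreviate $W:=u^{-4}\tilde{\rho}^{-1}\partial_H(u^2\tilde{\rho})$, so that the Ricci formula reads
\[
-2\Ric(g_\om)=d_Md_M^c(\log(u^2\tilde{\rho}))-W\,\partial_H(\tilde{\om}_1)+d_M W\w\Theta+u^2\,dH\w d_M^c W+\partial_H(W)\,dH\w\Theta,
\]
while from (\ref{kahlerform}) we have $\lambda\om=\lambda\tilde{\om}_1+\lambda\,\Theta\w dH$.

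The key observation is that on $P^6=\R_H\times Q^5$ the $1$-forms $dH$, $\Theta$, and the pullbacks of $1$-forms on $M^4$ are pointwise linearly independent (for regular values of $H$, away from the zero locus of $X$). Hence the space of $2$-forms splits as $\Lm^2 M^4\op(dH\w\Lm^1 M^4)\op(\Theta\w\Lm^1 M^4)\op\R(dH\w\Theta)$, and the Einstein equation $-2\Ric(g_\om)=\lambda\om$ holds if and only if it holds in each of these four summands separately.

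I would then match coefficients summand by summand. The $dH\w\Theta$ component yields $\partial_H(W)=-\lambda$. The $\Theta\w\Lm^1 M^4$ component gives $d_M W=0$ and the $dH\w\Lm^1 M^4$ component gives $d_M^c W=0$; both simply say that $W$ is constant along $M^4$, so they are equivalent. Since $W$ is $S^1$-invariant it descends to a function on $\R_H\times M^4$, and $d_M W=0$ forces $W=W(H)$; then $\partial_H W=-\lambda$ integrates to $W=-C-\lambda H$ for a genuine constant $C$, which is exactly (\ref{EinsteinCondition1}). Finally the pure $\Lm^2 M^4$ component reads $d_Md_M^c(\log(u^2\tilde{\rho}))-W\,\partial_H(\tilde{\om}_1)=\lambda\tilde{\om}_1$, and substituting $W=-C-\lambda H$ rearranges precisely into (\ref{EinsteinCondition2}). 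For the converse I would note that each of these steps is reversible: feeding (\ref{EinsteinCondition1}) and (\ref{EinsteinCondition2}) back into the displayed Ricci formula collapses the off-diagonal terms and recovers $-2\Ric(g_\om)=\lambda\om$.

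The main point requiring care---and the one I would flag as the principal, if modest, obstacle---is \emph{justifying the component decomposition}. One must confirm that the four form-types above are genuinely independent, so that equating coefficients is legitimate, and, crucially, that $d_M W=0$ upgrades the naive ``$H$-independent integration constant'' to a \emph{true} constant $C$ rather than an arbitrary function on $M^4$. This is where the vanishing of the two mixed ($\Theta$- and $dH$-)components does the essential work, and it is the only place where the interplay between the $M^4$-dependence and the $H$-dependence of $W$ must be handled with attention.
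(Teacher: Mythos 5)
Your proposal is correct and is exactly the argument the paper leaves implicit: the Corollary is obtained by equating the expression for $-2\Ric(g_\om)$ from Proposition \ref{ricciformproposition} with $\lambda\om=\lambda\tilde{\om}_1+\lambda\,\Theta\w dH$ in each summand of the decomposition $\Lm^2M^4\op(dH\w\Lm^1M^4)\op(\Theta\w\Lm^1M^4)\op\R(dH\w\Theta)$, with the mixed components forcing $W$ to depend only on $H$ and the $dH\w\Theta$ component integrating to (\ref{EinsteinCondition1}). Your sign bookkeeping ($\partial_HW=-\lambda$ from $\Theta\w dH=-dH\w\Theta$) and the upgrade of the integration constant to a true constant via $d_MW=0$ are both handled correctly.
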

We should emphasise that despite being able to describe all $S^1$-invariant Calabi-Yau metrics (at least locally), there are in general no known method for constructing the associated holomorphic volume form $\Om$ which is especially important in the study of special Lagrangians. This is likely the main reason why the construction of special Lagrangians has been focussed mainly to just $\C^n$ cf. \cite{Bryant2004SL, Haskins2004SL, Joyce2000SL}. Of particular interest to us are the examples arising from the so-called Calabi ansatz: 

\textit{Calabi's examples.}
The most well-known examples arising from Corollary \ref{EinsteinConditionCorollary} are the complete Calabi-Yau metrics on line bundles over Fano $4$-manifolds \cite{Calabi1957}: in this case $\lambda=0$, $\tilde{\om}_1=2H\om_1$ with $\om_1$ denoting the K\"ahler form on $M^4$ with $\Ric(\om_1)=6\!\ \om_1$ (here we have set $C=6$), $\tilde{\rho}=H^{-2}\rho_1$ with $\rho_1$ denoting the Ricci potential of $\om_1$ and 
\begin{equation}
	u=\frac{H}{\sqrt{2H^3-\check{C}}}
\end{equation} 
for some constant $\check{C}\geq 0$ (determining the size of the zero section $M^4\hookrightarrow P^6$). When $\check{C}=0$, $g_\om$ corresponds to a cone metric on a Sasaki-Einstein $5$-manifold. 
%We should point out that Calabi's argument in \cite{Calabi1957} only gives the metric implicitly and to best of our knowledge the above explicit expression for $u$ was first found by Page and Pope in \cite[(6.6)]{PagePope1985}. 
In this paper we are primarily interested in the case when $M^4$ is the homogeneous space $\C \mathbb{P}^2$ and $\C \mathbb{P}^1 \times \overline{\C\mathbb{P}^1}$ (and hence $P^6$ is a cohomogeneity one manifold). By means of a \textit{good} coordinate system we shall find below explicit expressions for the holomorphic volume form in these cases. 
\subsection{Constant solutions}\label{constantsolutionsection}
We now consider the simplest examples arising from Theorem \ref{kahlerreductiontheorem} when $u$ is solely a function of $H$ i.e. $d_Mu=0.$ Solving (\ref{kahlerPDE}) we get 
\begin{equation}
	\tilde{\om}_1=H\sigma_1 +\sigma_0,\label{generalomtildeconstantsolution}
\end{equation}
for some closed $(1,1)$ forms $\sigma_0$ and $\sigma_1$ on $M^4$, independent of $H$. 
We shall also assume that $(M^4,J_1)$ admits a standard K\"ahler form $\om_1$ and hence we can define a non-degenerate symmetric bilinear form $B$ on $2$-forms by wedge product:
\[\al \w \beta = B(\al,\beta)\!\ \om_1 \w \om_1 .\]
Restricting $B$ to the $2$-plane spanned by $\langle\sigma_0,\sigma_1 \rangle$ we may diagonalise so that the general solution to (\ref{generalomtildeconstantsolution}) can be expressed as
\begin{equation}
	\tilde{\om}_1= (aH+b)\om_1+(pH+q)\om_0,\label{constanttildeom}
\end{equation}
where $\om_0$ is a closed anti-self-dual form on $(M^4,J_1,\om_1,g_{\om_1})$ %, i.e. $\om_0\in \Lm^{1,1}(M^4)$ and $\om_1 \w \om_0=0$, 
such that $\om_0 \w \om_0=-\om_1 \w \om_1$,
and $a,b,p,q$ are constants cf. \cite[Sect. 3]{Apostolov2003}. %[this follows by repeating the argument of Apostolov-Salamon \cite{Apostolov2003}].
To ensure that $g_{\tilde{\om}_1}$ is indeed \textit{positive} definite we need to impose that
\[aH+b > |pH+q|,\]
otherwise we would be led to metrics with mixed signature i.e. the holonomy group will be contained in $U(1,2)$ rather than $U(3)$. We now have that
\[d\Theta = -a \om_1 -p \om_0  \in H^2(M^4,\mathbb{Z})\]
and (\ref{EinsteinCondition2}) becomes
\begin{equation}
	d_Md_M^c(\log(\tilde{\rho}))=(\lambda b -aC)\om_1+(\lambda q -p C)\om_0.\label{simplifyEinsteinCondition2}
\end{equation}
This shows that the Ricci form of $(M^4,g_{\tilde{\om}_1},J_1,\tilde{\om}_1)$ lies in the linear span of $\om_{0}$ and $\om_{1}$.
In particular, we see that if $p=q=0$ then $(M^4,J_1,\om_1,g_{\om_1})$ is in fact a K\"ahler Einstein manifold i.e.
\begin{equation}
	\Ric(\om_1)=-\frac{1}{2}(\lambda b -aC)\om_1\label{einsteinM4}
\end{equation}
with $\tilde{\rho}=(aH+b)^{-2}\rho_1$; $\rho_1$ being the exponential of the K\"ahler potential of $\om_1$. Hence solving (\ref{EinsteinCondition1}) we find that
\begin{equation}
	u=\frac{6(aH+b)}{\sqrt{9a^2\lambda H^4+12a(2b\lambda+aC)H^3+18b(b\lambda+2aC)H^2-36b^2 C H-\check{C}}},\label{solutionforuEinstein}
\end{equation}
where $\check{C}$ is a constant.
\begin{Rem}
	Observe that if $a=p=0$ so that $d\Theta=0$, then we get a product Einstein metric on $P^6=M^4 \times \Sigma^2$, where the universal cover of $\Sigma^2$ is isometric to either the round $2$-sphere, Euclidean $\R^2$ or the hyperbolic plane depending on the Ricci curvature of $(M^4,g_{\om_1})$. 
\end{Rem}
The family of, a priori only local, K\"ahler Einstein metrics determined by (\ref{einsteinM4}) and (\ref{solutionforuEinstein}) include several well-known complete 
Einstein metrics: %with $\lambda>0,=0,<0$:
\begin{enumerate}
	\item When $a=2,b=0,C=6, \check{C}>0, \lambda=0$ we recover the aforementioned complete Calabi-Yau metrics found by Calabi \cite{Calabi1957}: here $M^4$ is a Fano manifold.\label{calabiansatzgenral}
	\item When $a=1,b=0,C=12, \check{C}=0, \lambda=-16$ we obtain a positive K\"ahler Einstein metric on the $1$-point compactification of the hyperplane bundle of $M^4$: here $M^4$ is again a Fano manifold. e.g. if $M^4=\C\mathbb{P}^2$ then $P^6=\C\mathbb{P}^3=\overline{\mathcal{O}_{\C\mathbb{P}^2}(-1)}$ (with scalar curvature $48$). If $M^4$ is not $\C\mathbb{P}^2$ then we get an isolated conical singularity at infinity; in the $\C\mathbb{P}^2$ case there is no singularity since we have an $S^5$ collapsing smoothly to the point at infinity.\label{CP3asS1invariant}
	\item When $a=1, b=0,C=-12, \check{C}=0, \lambda=16$ we obtain complete negative K\"ahler Einstein \label{MoreEinstein}
	%(complete?? need to check but i think yes they are complete from a power series expansion argument
	metrics which are again defined on complex line bundles since we need $H\geq 1$. Note that in this case $M^4$ is also a negative K\"ahler Einstein manifold. These metrics are the non-compact duals of case (\ref{CP3asS1invariant}).
	\item When $a>0, b=0,C=0, \check{C}=0, \lambda>0$ we again get complete negative K\"ahler Einstein metrics cf. \cite[Theorem 1.1 (1.5)]{UdhavFowdar4}, but now these are defined for all $H\in \R$. Note that in this case $M^4$ is a hyperK\"ahler manifold. \label{EinsteinonHK}
\end{enumerate}
A detailed analysis of when one gets complete non-compact K\"ahler Einstein metrics on $P^6$ can be found in \cite{HwangSinger1998}: the ODE analysis therein includes the case when $p,q$ are non-zero as well but we should however point out the authors do not rely on the explicit expression (\ref{solutionforuEinstein}) we gave here and instead study the behaviour of $u$ implicitly by dynamical methods. Aside from the above complete examples, there are also several incomplete but nonetheless interesting examples, which we describe next.

\subsubsection{Examples on $M^4=\mathbb{T}^4$.}\label{pqnonzeroexamples} 
When the constants $p,q$ are non-zero it is harder to identify the K\"ahler metric $g_{\tilde{\om}_1}$ explicitly in general. However if $M^4=\mathbb{T}^4$ then, without loss of generality by choosing suitable coordinates $x_i$, we can always take
\begin{gather*}
	\om_1=dx_{12}+dx_{34},\\
	\om_0=dx_{12}-dx_{34},
\end{gather*}
and hence
\begin{gather}
	\tilde{\om}_1=((a+p)H+(b+q)) dx_{12}+((a-p)H+(b-q)) dx_{34},\\
	g_{\tilde{\om}_1}=((a+p)H+(b+q))(dx_1^2+dx_2^2)+((a-p)H+(b-q))(dx_3^2+dx_4^2).
\end{gather}
The K\"ahler potential is now given by 
$$\tilde{\rho}=(((a+p)H+(b+q))((a-p)H+(b-q)))^{-1}\rho_1,$$ 
where in this case $\rho_1$ is just a constant. The total space of the $S^1$ bundle $Q^5$ is now a nilmanifold determined by
\[d\Theta = -(a+p) dx_{12} -(a-p) dx_{34}  \in H^2(\mathbb{T}^4,\mathbb{Z}),\]
which correspond, up to diffeomorphism, to either of the nilpotent Lie algebras
\[(0,0,0,0,12) \text{\ \ \  or \ \ \ } (0,0,0,0,12+34),\]
where we are using Salamon's notation to denote the Lie algebras cf. \cite{Salamoncomplexstructures}. The fact that these nilpotent Lie algebras can be used to construct Calabi-Yau metrics was also found by Conti-Salamon in \cite{ContiSalamon2007} by different methods.

From (\ref{simplifyEinsteinCondition2}) it now follows that we need
\[\lambda b -aC=\lambda q -p C =0\]
in order to get an Einstein metric.
If $C \neq 0$ then we can set $a=\lambda b/C$ and $p=\lambda q /C$. By rescaling the coordinates $x_1,x_2$ and $x_3,x_4$ it is not hard to see that we are led back to the case when $p=q=0$.

If instead $C=0$ then either $p=q=0$ and we are back to the above setting, or $\lambda=0$. In the latter case solving (\ref{EinsteinCondition2}) we get
\begin{equation}
	u=\sqrt{((a+p)H+(b+q))((a-p)H+(b-q))}.\label{uhalfcomplete}
\end{equation}
In this case, the Calabi-Yau metrics on $P^6$ are half-complete i.e. they are complete when $H\to +\infty$ but are incomplete at the other end of the interval on which $H$ is defined, see \cite[Sect. 6]{ContiSalamon2007} for similar nilmanifold examples but which are not bundles on $\mathbb{T}^4$. %It is worth noting the different solutions $u$ we get when $p=q=0$ (\ref{solutionforuEinstein}) compared to (\ref{uhalfcomplete}).

Similar half-complete metrics on $4$-manifolds were recently used in \cite{HeinSunViaclovskyZhang2022} to construct collapsing limits of $K3$ surfaces endowed with their \textit{complete} Calabi-Yau metrics. In the latter case however there is only one choice of a nilpotent Lie algebra, namely
\[(0,0,12)\]
and the K\"ahler quotient is of course just an elliptic curve $\mathbb{T}^2$. These half-complete metrics were shown to provide a good approximation for the asymptotic behaviour of the complete Tian-Yau metrics cf. \cite{TianYau}, so the metrics determined by (\ref{uhalfcomplete}) might find similar applications in future higher dimensional problems.

While this paper is mainly concerned with the case when $d_M u=0$, we shall now make a slight digression and discuss the case $d_M u \neq 0$ since to the best our knowledge this does not appear to have been studied in the literature.
\iffalse
Suppose now that $C \neq 0$, then we can set $a=\lambda b/C$ and $p=\lambda q /C$ and solving (\ref{EinsteinCondition2}) we get
\[u=\frac{2 (\lambda H+C)}{\sqrt{\lambda^3 H^4+4 C \lambda^2 H^3+6 C^2 \lambda H^2+4 C^3 H + \check{C}}}.\]
We are unaware of any complete solution to the above (these solutions can probably be though of as deformation of the previous solutions when $p=q=0$). [probably there are some complete  negative K\"ahler Einstein metrics on certain solvmanifold in this case]

The simplest example I could find in this family is
\[g=dt^2+\frac{1}{16} e^{2t}\Theta^2+\frac{e^{t}(b+q)}{C} (dx_1^2+dx_2^2)+\frac{e^{t}(b-q)}{C} (dx_3^2+dx_4^2)\]
[it seems that if $C\neq 0$ then we are back to the case when $p=q=0$ since we can essentially rescale the coordinates $x_1,x_2$ and $x_3,x_4$, and then we are back to the previous situation with $p=q=0$]
\fi
\subsection{Non-constant solutions}\label{nonconstantsolutions}
In this section we want to show the simplest instance how one can construct deformations of some of the constant solutions Calabi-Yau metrics by allowing $u$ to vary on $M^4$. We follow the approach described in  \cite[Sect. 9]{UdhavFowdar2}.

We consider the case when $(M^4,g_M,\om_1,\om_2+i\om_3:=\sigma_2+i\sigma_3)$ is a hyperK\"ahler manifold together with a, possibly zero, closed anti-self-dual $\om_0$ as above. %; such examples with non-zero $\om_0$ are known cf. \cite{ApostolovCalderbankGauduchon2003, ArmstrongTodansatz}. 
We look for solutions to (\ref{kahlerPDE}) and (\ref{uandomtilde}) of the form
\begin{equation}
	\tilde{\om}_1 = (aH+b)\om_1+(pH+q)\om_0 + d_M d_M^c G,
\end{equation}
where $G:\R_H \times M^4 \to \R$. When $G=0$ (so that $d_Mu=0$), from (\ref{uandomtilde}) we find that $u$ is given by (\ref{uhalfcomplete}) and we get again the aforementioned half-complete Calabi-Yau metrics. For a general function $G$ it is clear that $\tilde{\om}_1$ is still a K\"ahler form and that  the cohomology class $[\tilde{\om}_1]$ remains unchanged. 

Equation (\ref{kahlerPDE}) now becomes
\[d_Md_M^c(u^2-G'')=0,\]
where $'$ denotes partial derivative with respect to $H$, so without loss of generality we may take $u^2=G''$. Together with equation (\ref{uandomtilde}) we now have
\begin{align}
	G'' \om_1^2 = &((aH+b)^2-(pH+q)^2+(aH+b)\Delta_MG) \om_1^2\label{equationforG}\\
	&+2(pH+q)(d_Md_M^cG)\w \om_0 + (d_Md_M^cG)^2,\nonumber
\end{align}
where $\Delta_M$ denotes the Hodge Laplacian on $(M^4,g_M,\om_1,J_1)$.
The last term can more explicitly be written as
\[(d_Md_M^cG)^2=(\frac{1}{4}(\Delta_MG)^2-\frac{1}{2}\|(d_Md_M^cG)_0\|^2_{g_{\om_1}}) \om_{1}^2, \]
where $(d_Md_M^cG)_0$ denotes the traceless component of $d_Md_M^cG$ or equivalently its projection in $\Lm^2_-(M^4)$. Thus, this reduces the problem of finding a Calabi-Yau metric to solving a single second order PDE for the function $G$ only. 

Since $M^4$ is a hyperK\"ahler manifold we know that it admits a real analytic structure cf. \cite{DeTurckKazdan1981} and hence given real analytic initial data to (\ref{equationforG}) we  can appeal to the Cauchy-Kovalevskaya theorem to get a general existence result:
\begin{Prop}
	Given real analytic functions $G_0$ and $G_1$ on the hyperK\"ahler manifold $M^4$ such that $\tilde{\om}_1 = (aH+b)\om_1+(pH+q)\om_0 + d_M d_M^c G_0$ defines a positive definite metric. Then there exists a unique solution  $G(H)$ to (\ref{equationforG}) with $G(0)=G_0$ and $G'(0)=G_1$ defined for $H\in(0,T)$. Theorem \ref{kahlerreductiontheorem} then asserts that this defines (locally) a Calabi-Yau metric on $(0,T) \times S^1 \times M^4$.
\end{Prop}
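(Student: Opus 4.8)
The plan is to recast (\ref{equationforG}) as a single scalar equation in Cauchy--Kovalevskaya normal form and then invoke the analytic Cauchy--Kovalevskaya theorem. First I would divide (\ref{equationforG}) through by $\om_1^2$. Using the pairing conventions fixed earlier (so that $d_Md_M^cG\w\om_1=\tfrac12(\Delta_MG)\,\om_1^2$, together with the stated identity $(d_Md_M^cG)^2=(\tfrac14(\Delta_MG)^2-\tfrac12\|(d_Md_M^cG)_0\|^2_{g_{\om_1}})\om_1^2$), this turns (\ref{equationforG}) into a genuine scalar PDE of the shape
\[ G'' \;=\; F\bigl(H,x,G,\partial_M G,\partial_M^2 G\bigr), \]
where $x$ is a point of $M^4$, $\partial_M$ denotes differentiation in the $M^4$-directions only, and $F$ is a polynomial (quadratic) expression in the second $M^4$-derivatives of $G$ whose coefficients are built from $(aH+b)$, $(pH+q)$ and the hyperK\"ahler data $\om_1,\om_0,g_M$. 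The crucial structural observation is that no $H$-derivative of $G$ occurs on the right-hand side: the top $H$-order is the $2$ carried by $G''$, while every derivative appearing in $F$ has $H$-order $0<2$ and total order $\le 2$. This is exactly the order pattern for which Cauchy--Kovalevskaya applies (the ``wave-like'' regime $m=2$ with total order $\le m$, in contrast to the excluded parabolic pattern of the heat equation).

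Next I would secure the analyticity hypotheses. Since $(M^4,g_M)$ is hyperK\"ahler it is in particular Einstein, hence real analytic in suitable coordinates by \cite{DeTurckKazdan1981}; fixing such an atlas, the forms $\om_1,\om_0$, the operators $\Delta_M$ and $d_Md_M^c$, and the pointwise norm all have real-analytic expressions, so $F$ is jointly real analytic in $(H,x,G,\partial_MG,\partial_M^2G)$ (polynomial in the derivative slots, analytic in the coefficients). With the prescribed analytic Cauchy data $G(0,\cdot)=G_0$, $G'(0,\cdot)=G_1$, and the surface $\{H=0\}$ automatically non-characteristic (the coefficient of $G''$ is $1$), the scalar higher-order Cauchy--Kovalevskaya theorem yields, near each point of $\{0\}\times M^4$, a unique real-analytic solution. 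Its uniqueness lets the local solutions be glued across charts into a solution $G$ defined on a neighbourhood of $\{0\}\times M^4$, giving $G$ on $(0,T)\times M^4$ for a suitable $T>0$.

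Finally I would feed the output back into Theorem \ref{kahlerreductiontheorem}. Setting $u^2:=G''$ solves (\ref{kahlerPDE}) by construction, while (\ref{equationforG}) is precisely (\ref{uandomtilde}) after this substitution; moreover $u^2=\tilde\om_1\w\tilde\om_1/\om_1^2>0$ because the chosen data makes $\tilde\om_1$ positive at $H=0$, and positivity is an open condition preserved for small $H$ (after shrinking $T$ if necessary). Hence $u=\sqrt{G''}$ is a genuine positive function and the pair $(u,\tilde\om_1)$ meets the hypotheses of Theorem \ref{kahlerreductiontheorem}, producing the asserted Calabi--Yau metric on $(0,T)\times S^1\times M^4$.

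I expect the main obstacle to be the globalisation rather than the local solvability. Cauchy--Kovalevskaya is intrinsically local in space and time, and the radius of analyticity---hence the existence time furnished at each point---may shrink as one moves across $M^4$; asserting a single uniform $T>0$ valid over all of $M^4$ is immediate when $M^4$ is compact (for instance $\mathbb{T}^4$ or $K3$), but for non-compact hyperK\"ahler $M^4$ it requires uniform analyticity estimates, and absent these the conclusion should be read as the germ of a solution along $\{0\}\times M^4$. A secondary point worth checking carefully is that the quadratic dependence of $F$ on the top-order spatial derivatives $\partial_M^2G$ is admissible: it is, because Cauchy--Kovalevskaya permits fully nonlinear analytic dependence on all derivatives of admissible order, not merely linear or quasilinear dependence.
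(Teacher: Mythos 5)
Your proposal follows essentially the same route as the paper, which proves the Proposition by exactly this combination: real analyticity of the hyperK\"ahler structure via DeTurck--Kazdan, the observation that (\ref{equationforG}) is a single scalar equation in Cauchy--Kovalevskaya normal form $G''=F(H,x,G,\partial_MG,\partial_M^2G)$ with no $H$-derivatives on the right-hand side, and then feeding $u^2=G''$ back into Theorem \ref{kahlerreductiontheorem}. Your write-up is in fact more careful than the paper's (which leaves the argument at the level of the preceding paragraph), and your caveat about the uniformity of $T$ over a non-compact $M^4$ is a legitimate point the paper does not address.
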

\begin{Rem}
	In the language of exterior differential systems, the above proposition is saying that the $S^1$-invariant Calabi-Yau metrics are determined by $2$ functions of $4$ variables namely $G_0$ and $G_1$. Owing to the fact that we could reduce the problem of constructing Calabi-Yau metric to the single PDE (\ref{equationforG}) we were able to appeal to the Cauchy-Kovalevskaya theorem instead of the more general Cartan-K\"ahler theory; we refer the reader to \cite{BryantEmbedding} for a comparison with the Calabi-Yau $2$-fold case.
\end{Rem}
Since (\ref{equationforG}) is quite hard to solve in general, we shall make some further simplifying assumptions so that we can find certain explicit solutions.

From \cite[Theorem 2.4, 3.2]{Bedford77} we know that a smooth real function $F$ on $M^4$ satisfies 
\[(d_Md^c_MF)^2=0 \] 
if and only if $M^4$ admits a foliation by complex submanifolds, with the leaves corresponding to the integral complex curves of the ideal generated by $d_Md^c_MF$. In this case we may assume there exists locally a fibration $\pi:M^4 \to \Sigma^2 $, where $\Sigma^2$ is a complex curve and that $F$ descends to a function on $\Sigma^2$. Under this hypothesis on $G$ we can eliminate the quadratic term in (\ref{equationforG}) .

\textit{Examples on $\mathbb{T}^4$ again.} With $M^4=\mathbb{T}^4$ %(or $\mathbb{R}^4$) 
as above, we take $(a,b,p,q)=(1,0,0,0)$ and consider $G$ of the form
\[G(H,x_1,x_2)=v(H) F(x_1,x_2)+\frac{1}{12}H^4.\]
Here $\Sigma^2$ is the elliptic curve $\mathbb{T}^2$ with coordinates $(x_1,x_2)$. Equation (\ref{equationforG}) now reduces to the pair
\begin{gather}
	\Delta_M F = \mu F,\\
	v''=\mu H v,	
\end{gather}
where $\mu$ is a constant. For instance a simple solution is given by $\mu=1$, $F=\sin(x_1)$ and $v=\mathrm{Ai}(H)$ (the Airy function). Another solution is given by $\mu=-1$, $F=\exp(x_1)$ and $v=\mathrm{Ai}(-H)$. One can verify directly by computing the rank of the curvature operator, say using \textsc{Maple}, that indeed the resulting metrics have holonomy \textit{equal} to $SU(3)$. Similar deformations of $G_2$ and $Spin(7)$ holonomy metrics were found in \cite{Apostolov2003, UdhavFowdar2}.
\begin{Rem}
	Constructing Einstein deformations for the general case when $d_Mu=0$ is much harder since instead of (\ref{uandomtilde}) we are then required to solve (\ref{EinsteinCondition1}) and (\ref{EinsteinCondition2}) which also involves $\tilde{\rho}$ (itself depending on $\tilde{\om}_1$).
\end{Rem}
%We now want to reinterpret the Hermitian Yang-Mills condition on $P^6$ in terms of the data on $M^4$.
\section{$S^1$-invariant Hermitian Yang-Mills connections}\label{kahlerreductionofHYMsection}
Having fully encoded the K\"ahler Einstein condition on $P^6$ in terms of data on $M^4$ (by means of Theorem \ref{kahlerreductiontheorem} and Corollary \ref{EinsteinConditionCorollary}), we next proceed to do the same for the (deformed) Hermitian Yang-Mills conditions on $P^6$.
\subsection{K\"ahler reduction of the (deformed) Hermitian Yang-Mills condition}
Suppose now that 
\begin{equation}
	\underline{A}:=\psi \Theta+A\label{s1invtA}
\end{equation} 
is an $S^1$-invariant connection on $P^6$, where $A=A(H)$ denotes a family of connections on $E\to M^4$ and $\psi=\psi(H)\in \Gamma(\mathrm{End}(E))$ denotes a family of Higgs fields;
the $S^1$-invariant hypothesis means that $\mathcal{L}_X\underline{A}=0$ i.e.
\begin{equation}
	\mathcal{L}_XA =0\text{\ \ \ and\ \ \ } \mathcal{L}_X\psi=0.
\end{equation}
Note that in (\ref{s1invtA}) we assumed that $\underline{A}$ has no $dH$-term since we can always set it to zero by means of a suitable $S^1$-invariant gauge transformation, so indeed any $S^1$-invariant connection $\underline{A}$ can be expressed in the form (\ref{s1invtA}). Next we compute the curvature of $\underline{A}$.
\begin{Prop}
The curvature form $F_{\underline{A}}$ of (\ref{s1invtA}) is given by
\begin{equation} 
	F_{\underline{A}}=(d^A_M\psi+\partial_H \psi dH)\w \Theta +(\psi d_M^c(u^2)-\partial_H(A))\w dH+F_A-\psi \partial_H(\tilde{\om}_1),\label{s1invariantcurvatureform}
\end{equation} 
where $$d^A_M\psi:=d_M\psi + A \psi -\psi A$$ is the exterior covariant derivative of the Higgs field $\psi$ and $F_A:=d_MA +A\w A$ is the curvature of $A$ on $M^4$, depending on $H$. 
\end{Prop}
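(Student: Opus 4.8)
The plan is to compute $F_{\underline A}=d\underline A+\underline A\w\underline A$ directly and then to sort the resulting terms by type, according to whether they contain a factor of $\Theta$, a factor of $dH$, or neither. The two facts I would use throughout are that the $S^1$-invariant data (the endomorphism $\psi$, the base connection $A$, and the form $\tilde\om_1$) depend only on $H$ and on the point of $M^4$, so that on $P^6=\R_H\times Q^5$ the exterior derivative splits as $d=d_M+dH\w\partial_H$ when applied to such objects; and the curvature identity (\ref{curvaturekahlerreduction}), $d\Theta=-\partial_H(\tilde\om_1)+d_M^c(u^2)\w dH$, which is precisely the source of the terms $-\psi\,\partial_H(\tilde\om_1)$ and $\psi\,d_M^c(u^2)\w dH$ in the claimed formula. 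The hypothesis that $\underline A$ carries no $dH$-component (the invariant gauge choice recorded just before the statement) is what keeps the computation free of extra contributions.

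First I would expand the derivative part. By the Leibniz rule $d(\psi\Theta)=d\psi\w\Theta+\psi\,d\Theta$; substituting $d\psi=d_M\psi+\partial_H\psi\,dH$ and the expression for $d\Theta$ yields $d_M\psi\w\Theta+\partial_H\psi\,dH\w\Theta-\psi\,\partial_H(\tilde\om_1)+\psi\,d_M^c(u^2)\w dH$. Likewise $dA=d_MA-\partial_H(A)\w dH$, the sign arising from commuting $dH$ past the matrix of $1$-forms $\partial_H A$.

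Next I would treat the quadratic term. Since $\Theta$ is a scalar $1$-form, $(\psi\Theta)\w(\psi\Theta)=\psi^2\,\Theta\w\Theta=0$, while a careful treatment of the matrix multiplication — remembering that moving $\Theta$ past a matrix of $1$-forms costs a sign — gives $\psi\Theta\w A+A\w\psi\Theta=(A\psi-\psi A)\w\Theta$, so that $\underline A\w\underline A=(A\psi-\psi A)\w\Theta+A\w A$.

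Finally I would collect terms. The coefficients of $\Theta$ combine as $d_M\psi+A\psi-\psi A=d^A_M\psi$ together with $\partial_H\psi\,dH$, giving $(d^A_M\psi+\partial_H\psi\,dH)\w\Theta$; the remaining $dH$-terms give $(\psi\,d_M^c(u^2)-\partial_H(A))\w dH$; and the pure $M^4$-terms give $d_MA+A\w A-\psi\,\partial_H(\tilde\om_1)=F_A-\psi\,\partial_H(\tilde\om_1)$. This is exactly (\ref{s1invariantcurvatureform}). The computation is otherwise routine; the only genuine care needed is the sign bookkeeping when commuting $dH$ and $\Theta$ past $1$-forms and respecting the noncommutativity of $\psi$ and $A$ in the quadratic term, which is what reorganises $d_M\psi$ and the commutator into the single covariant derivative $d^A_M\psi$ rather than a bare $d_M\psi$.
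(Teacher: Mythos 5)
Your computation is correct and follows exactly the paper's route: expand $F_{\underline A}=d\underline A+\underline A\wedge\underline A$ term by term, split $d=d_M+dH\wedge\partial_H$ on the $S^1$-invariant data, substitute (\ref{curvaturekahlerreduction}) for $d\Theta$, and collect the commutator $(A\psi-\psi A)\wedge\Theta$ with $d_M\psi\wedge\Theta$ into $d^A_M\psi\wedge\Theta$. The sign bookkeeping you describe for the quadratic term is exactly the step the paper leaves implicit.
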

\begin{proof}
A simple computation shows that
\begin{align*}
	F_{\underline{A}} =\ &\partial_H\psi dH \w \Theta + d_M\psi \w \Theta+\psi d\Theta - \partial_H A \w dH +d_M A\\ &+ \psi \Theta \w A+\psi A \w \Theta + A \w A.
\end{align*}
It now suffices to use (\ref{curvaturekahlerreduction}) and this completes the proof.
\end{proof}
We are now in position to characterise the Hermitian Yang-Mills condition on $\underline{A}$ in terms of the data on $M^4$.
\begin{Th}\label{instantonkahlerreductiontheorem}
	The $S^1$-invariant connection form $\underline{A}:=\psi \Theta+A$ on $P^6$, as determined by Theorem \ref{kahlerreductiontheorem}, is a Hermitian Yang-Mills connection, say with gauge group $G$, if and only if on $M^4$ we have that $F_A \in \Lm^{1,1}(M^4)$ i.e. $A$ is Hermitian connection for each $H$ and the following holds:
	\begin{gather}
		u^2J_1(d^A_M\psi)=\partial_H(A)-\psi d^c_M(u^2),\label{instantonequ11}\\
		\partial_H(\psi)-2(F_A - \psi \partial_H(\tilde{\om}_1))_{\tilde{\om}_1} = C_0,\label{instantonequ12}
	\end{gather}
	where $C_0$ is a constant and $(F_A - \psi \partial_H(\tilde{\om}_1))_{\tilde{\om}_1}$ denotes the $\tilde{\om}_1$-component. 
\end{Th}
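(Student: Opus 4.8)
The plan is to decompose the Hermitian Yang-Mills conditions \eqref{hym11} and \eqref{hym2} according to the splitting of $2$-forms on $P^6$ induced by the frame $\{\Theta, dH, \text{horizontal forms on } M^4\}$, and match each component against the explicit curvature formula \eqref{s1invariantcurvatureform}. First I would observe that $P^6$ has a co-frame built from $\Theta$, $dH$, and a co-frame for $T^*M^4$, so that every $2$-form splits into: a purely horizontal (pullback-from-$M^4$) piece, pieces of the form $(\text{horizontal }1\text{-form})\w\Theta$ and $(\text{horizontal }1\text{-form})\w dH$, and the $\Theta\w dH$ piece. Reading \eqref{s1invariantcurvatureform}, the purely horizontal part of $F_{\underline A}$ is $F_A - \psi\,\partial_H(\tilde\om_1)$, the $\Theta$-wedge part collects $d^A_M\psi\w\Theta$, and the $dH$-wedge part collects $(\psi d_M^c(u^2)-\partial_H A)\w dH$, while the $\Theta\w dH$ term is $\partial_H\psi\, dH\w\Theta$.

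The key step is to impose the $(1,1)$-type condition \eqref{hym11}. Using the complex structure \eqref{kahlermetric}–\eqref{kahlerform}, the combination $u^{-1}\Theta + iu\,dH$ spans $\Lm^{1,0}$ in the fibre direction, so $\Theta\w dH$ is already of type $(1,1)$ and imposes no constraint; similarly $J_1$ preserves the horizontal $\Lm^{1,1}(M^4)$, so the horizontal part being $(1,1)$ forces $F_A\in\Lm^{1,1}(M^4)$ (the term $\partial_H(\tilde\om_1)$ is automatically $(1,1)$ since $\tilde\om_1$ is a K\"ahler form). The genuinely new information comes from the mixed part involving $\Theta$ and $dH$: requiring the sum $d^A_M\psi\w\Theta + (\psi d_M^c(u^2)-\partial_H A)\w dH$ to lie in $\Lm^{1,1}(P^6)$ means its $(2,0)+(0,2)$ component must vanish. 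Writing $\Theta = \tfrac{u}{2}\big((u^{-1}\Theta+iu\,dH)+(u^{-1}\Theta-iu\,dH)\big)$ and likewise for $dH$, and using that a horizontal $1$-form $\beta$ pairs with the fibre $(1,0)$-covector, the vanishing of the $(2,0)$-part produces exactly one complex (hence two real) equation relating $d^A_M\psi$ and $\partial_H A - \psi d_M^c(u^2)$; after applying $J_1$ and collecting the factor $u^2$ one recovers \eqref{instantonequ11}.

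Next I would impose the trace condition \eqref{hym2}, namely $F_{\underline A}\w\om^2 = -2C_0\vol_\om$. Here I would use $\om = \Theta\w dH + \tilde\om_1$ and $\vol_\om = \tfrac16\om^3$, and the fact that wedging with $\om^2$ extracts the $\om$-trace. Only the two diagonal pieces of $F_{\underline A}$ that are genuinely $(1,1)$ contribute: the $\Theta\w dH$ coefficient $\partial_H\psi$ pairs against the $\tilde\om_1\w\tilde\om_1$ part of $\om^2$, and the horizontal coefficient $F_A-\psi\,\partial_H(\tilde\om_1)$ pairs against the $2\,\Theta\w dH\w\tilde\om_1$ part. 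Carefully tracking the combinatorial factors from $\om^2 = 2\,\Theta\w dH\w\tilde\om_1 + \tilde\om_1^2$ and the normalization $\vol_\om=\tfrac16\om^3$, these two contributions combine into $\partial_H\psi - 2(F_A-\psi\,\partial_H\tilde\om_1)_{\tilde\om_1} = C_0$, which is \eqref{instantonequ12}; the notation $(\cdot)_{\tilde\om_1}=\tfrac12 g_{\tilde\om_1}(\cdot,\tilde\om_1)$ from Proposition \ref{ricciformproposition} is precisely the $\tilde\om_1$-trace that appears.

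The main obstacle I anticipate is bookkeeping the numerical constants: the Hodge star, the $k!$-free inner product convention fixed in Section \ref{preliminaries}, the normalization $\vol_\om=\tfrac16\om^3=\tfrac14\Om^+\w\Om^-$, and the factors of $u$ relating $\Theta, dH$ to the unit $\Lm^{1,0}$ covector must all be threaded consistently so that the coefficients $u^2$ in \eqref{instantonequ11} and the factor $2$ and sign in \eqref{instantonequ12} come out correctly. The conceptual content—type decomposition for \eqref{hym11} and trace for \eqref{hym2}—is routine once the frame is fixed; the risk is purely that an errant factor slips through, so I would double-check by specializing to the abelian case where $d^A_M\psi=d_M\psi$ and by verifying the equations are invariant under the residual $S^1$-gauge freedom.
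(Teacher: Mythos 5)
Your proposal is correct and follows essentially the same route as the paper: decompose $F_{\underline{A}}$ via \eqref{s1invariantcurvatureform} into horizontal, mixed, and $\Theta\w dH$ pieces, extract \eqref{instantonequ11} and $F_A\in\Lm^{1,1}(M^4)$ from the type condition, and obtain \eqref{instantonequ12} by wedging with $\om^2$. The only cosmetic difference is that the paper enforces the $(1,1)$ condition by computing $F_{\underline{A}}\w\Om^+$ and using the identity $\beta\w\tilde{\om}_3=J_1(\beta)\w\tilde{\om}_2$, whereas you extract the $(2,0)+(0,2)$ component directly from the fibre $(1,0)$-covector $u^{-1}\Theta+iu\,dH$ — an equivalent computation.
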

\begin{proof}
	First we have that (\ref{hym1}) becomes
	\begin{align*}
		0= 	F_{\underline{A}}\w \Om^+ = &-ud_M^A\psi\w \Theta \w dH \w \tilde{\om}_3+u^{-1}(\partial_H A-\psi d_M^c(u^2))\w \Theta \w dH \w \tilde{\om}_2\\
		&+(F_A-\psi \partial_H(\tilde{\om}_1)) \w (u^{-1}\Theta \w \tilde{\om}_2-udH \w \tilde{\om}_3)
	\end{align*}
	Equation (\ref{instantonequ11}) now follows from the fact that $\beta \w \tilde{\om}_3=J_1(\beta)\w \tilde{\om}_2$ for a general $1$-form $\beta$ on $M^4$. From Theorem \ref{kahlerreductiontheorem} we know that $\partial_H(\tilde{\om}_1) \in \Lm^{1,1}(M^4)$ and hence the same applies to $F_A$.
	A simple calculation now shows that (\ref{hym2}) holds if and only if
	\begin{equation}
		\partial_H(\psi)-2(F_A - \psi \partial_H(\tilde{\om}_1))_{\tilde{\om}_1} = C_0,
	\end{equation}
	and this concludes the proof.
\end{proof}
The Hermitian Yang-Mills condition on $\underline{A}$ on $P^6$ consists of $7 \dim(\mathfrak{g})$ equations. The constraint that $F_A \in \Lm^{1,1}(M^4)$ consists of $2 \dim(\mathfrak{g})$ equations, (\ref{instantonequ11}) of $4 \dim(\mathfrak{g})$ and (\ref{instantonequ12}) of $\dim(\mathfrak{g})$, so the dimensional count $7=2+4+1$ confirms that the above system is consistent. Observe also that if we set $\psi=0$ then the above system becomes equivalent to asking that $\underline{A}$ is (the pullback of) a Hermitian Yang-Mills connection on $M^4$. This is essentially due to the inclusion $F_{\underline{A}}=F_A \in \Lm^{1,1}(M^4)\cong \mathfrak{u}(2)\subset \mathfrak{u}(3)\cong \Lm^{1,1}(P^6)$.
If $d\Theta=0$ so that $P^6=\Sigma^2 \times M^4$ is a Riemannian product then the pair (\ref{instantonequ11}) and (\ref{instantonequ12}) reduce to a Bogomolny type equation. So Theorem \ref{instantonkahlerreductiontheorem} can be interpreted as a Bogomolny type equation twisted with curvature form $d\Theta$.
In the abelian case i.e. when $G=U(1)$, we always have that:
\begin{Cor}\label{killingvectorfieldcorollary}
	The connection form $\underline{A}:=X^{\flat}=u^{-2}\Theta$ on $P^6$ is a Hermitian connection i.e. $dX^\flat \in \mathfrak{u}(3) \cong \Lm^{1,1}=\langle \om \rangle \oplus \Lm^{1,1}_0\cong \mathfrak{u}(1)\oplus\mathfrak{su}(3)$. Moreover if $P^6$ is a Calabi-Yau $3$-fold, then $X^{\flat}$ is Hermitian Yang-Mills and the $\om$-component of $F_{\underline{A}}$ vanishes if and only if $k=0$ i.e. $\mathcal{L}_X\Om=0$.
\end{Cor}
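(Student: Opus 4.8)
The plan is to reduce the statement to the single identity $X^\flat = d^c H$ and to the computation of one trace.

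\emph{The $(1,1)$-statement.} First I would note that the moment map satisfies $dH = X\ip\om = \om(X,\cdot) = g_\om(JX,\cdot)$, so applying the definition $d^c H = dH(J\,\cdot)$ and using that $J$ is a $g_\om$-isometry,
\[
d^c H = dH(J\,\cdot) = g_\om(JX, J\,\cdot) = g_\om(X,\cdot) = X^\flat .
\]
Hence $\underline A = X^\flat = u^{-2}\Theta = d^c H$, and its (abelian) curvature is $F_{\underline A} = dX^\flat = d\,d^c H$. On any complex manifold $d\,d^c$ maps functions to $(1,1)$-forms, so $F_{\underline A}\in\Lm^{1,1}(P^6)$ with no use of the Calabi-Yau hypothesis; this is exactly the claim $dX^\flat\in\mathfrak u(3)\cong\Lm^{1,1}$. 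As a cross-check via Theorem \ref{instantonkahlerreductiontheorem}, writing $\underline A=\psi\Theta+A$ as in (\ref{s1invtA}) with $\psi=u^{-2}$ and $A=0$, one verifies (\ref{instantonequ11}) immediately: both sides equal $-u^{-2}d^c_M(u^2)$, since $d_M(u^{-2})=-u^{-4}d_M(u^2)$ and $J_1\circ d_M=d^c_M$.

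\emph{The trace when $P^6$ is Calabi-Yau.} It remains to verify (\ref{hym2}), i.e.\ that the $\om$-trace $\Lambda_\om F_{\underline A}$ is constant. I would compute this trace intrinsically. Splitting $X=Z+\bar Z$ into its $(1,0)$ and $(0,1)$ parts, one has $Z^\flat\in\Lm^{0,1}$, $\bar Z^\flat\in\Lm^{1,0}$ and $X^\flat=Z^\flat+\bar Z^\flat$, so the $(1,1)$-form $F_{\underline A}$ equals $\partial Z^\flat+\bar\partial\bar Z^\flat$. Since $\Lambda_\om$ annihilates $1$-forms, the Kähler identities $[\Lambda_\om,\partial]=i\bar\partial^*$ and $[\Lambda_\om,\bar\partial]=-i\partial^*$ give
\[
\Lambda_\om F_{\underline A}=i\,\bar\partial^* Z^\flat-i\,\partial^*\bar Z^\flat .
\]
Because $\Om$ is parallel on a Calabi-Yau manifold, these codifferentials are minus the holomorphic divergence, $\bar\partial^* Z^\flat=-\operatorname{div}_\Om Z$ and $\partial^*\bar Z^\flat=-\overline{\operatorname{div}_\Om Z}$. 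Finally $\mathcal L_{\bar Z}\Om=0$ (as $\bar Z\ip\Om=0$ and $d\Om=0$), whence $\mathcal L_X\Om=\mathcal L_Z\Om=(\operatorname{div}_\Om Z)\,\Om$; comparing with Lemma \ref{permutingompluslemma}, $\mathcal L_X\Om=-ki\Om$, identifies $\operatorname{div}_\Om Z=-ki$. Substituting yields $\Lambda_\om F_{\underline A}=-2k$, a real constant.

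\emph{Conclusion.} Constancy of the trace is (\ref{hym2}): wedging with $\om^2$ and using $\vol_\om=\tfrac16\om^3$ gives $F_{\underline A}\w\om^2=2(\Lambda_\om F_{\underline A})\vol_\om=-4k\,\vol_\om$, so $\underline A$ is Hermitian Yang-Mills with $C_0=2k$. The $\om$-component of $F_{\underline A}$ is $\tfrac13(\Lambda_\om F_{\underline A})\,\om=-\tfrac{2k}{3}\om$, which vanishes if and only if $k=0$, i.e.\ by Lemma \ref{permutingompluslemma} if and only if $\mathcal L_X\Om=0$.

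The main obstacle is the trace computation: one must handle the Kähler identities, the type decomposition of $X^\flat$, and the identification of $\bar\partial^*,\partial^*$ with the holomorphic divergence consistently with the paper's sign conventions for $d^c$ and for the $\om$-component. An alternative proof of constancy (without the holomorphic splitting) uses that Ricci-flatness forces $X^\flat$ to be harmonic, since $\Delta X^\flat=2\Ric(X)^\flat=0$ and $d^*X^\flat=-\operatorname{div}X=0$; then $F_{\underline A}$ is closed and coclosed, so $\partial^*F_{\underline A}=\bar\partial^* F_{\underline A}=0$, giving $(d^c)^*F_{\underline A}=0$, and the identity $[\Lambda_\om,d]=-(d^c)^*$ forces $d(\Lambda_\om F_{\underline A})=0$. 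The divergence computation is still needed to match this constant with $k$.
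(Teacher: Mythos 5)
Your proof is correct, but it takes a genuinely different route from the paper's on both halves. For the $(1,1)$ claim the paper simply invokes Theorem \ref{instantonkahlerreductiontheorem} with $\psi=u^{-2}$, $A=0$ (which is your ``cross-check''), whereas your primary argument --- that $X^\flat=d^cH$ because $dH=X\ip\om=g_\om(JX,\cdot)$ and $J$ is a $g_\om$-isometry, so $F_{\underline A}=dd^cH\in\Lm^{1,1}$ --- is a clean global observation that bypasses the $S^1$-reduction machinery entirely and makes transparent why no Calabi--Yau hypothesis is needed; it is consistent with the paper's convention $d^cf=df\circ J$, under which $dd^cf=-2i\partial\bar\partial f$. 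For the second half the paper does a one-line wedge computation, $dX^\flat\w\om\w\om=-\tfrac12 d\bigl(JX\ip(\Om^+\w\Om^-)\bigr)=-4k\vol_\om$, using only (\ref{normalisationsu3}) and Lemma \ref{permutingompluslemma}; you instead compute the trace $\Lambda_\om F_{\underline A}$ via the type splitting $X=Z+\bar Z$, the K\"ahler identities, and the identification of $\bar\partial^*Z^\flat$ with the holomorphic divergence, landing on $\Lambda_\om F_{\underline A}=-2k$ and hence the same $F_{\underline A}\w\om^2=-4k\vol_\om$ and $C_0=2k$. Your route is heavier (and, as you note, sensitive to sign conventions for $\Lambda_\om$, $d^c$ and the K\"ahler identities, though your final constant agrees with the paper's), but it buys a conceptual explanation --- the trace is literally the real part of the $\Om$-divergence of $X^{1,0}$ --- and your harmonicity remark via the Bochner formula generalises the constancy statement beyond the specific $SU(3)$ setting. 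The paper's computation is shorter and stays entirely within the exterior algebra of the $SU(3)$-structure.
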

\begin{proof}
	The first part is immediate from Theorem \ref{instantonkahlerreductiontheorem} by setting  $\psi=u^{-2}$ and $A=0$. The second part can be deduced from
	\[dX^\flat \w \om \w \om = -\frac{1}{2} d(JX \ip (\Om^+\w \Om^-))=-k (\Om^+ \w \Om^-)=-4k\vol_\om,\]
	where we used (\ref{normalisationsu3}) and Lemma \ref{permutingompluslemma}.
	%[there should be a way of deducing this directly from the theorem? NO! the proof actually uses the fact that $P^6$ is Calabi-Yau so I also need to use the Einstein condition to reach this conclusion. i cannot merely use the previous theorem]
	% i should probably check again the constants terms in the latter equation 
\end{proof}
\begin{Rem}
	We should emphasise that the second part of the corollary crucially relies on the fact that $P^6$ is a Calabi-Yau $3$-fold (even Einstein is not sufficient). For instance there exist examples of holomorphic Killing vector fields on $\C \mathbb{P}^3$ for which the $\om_{\C\mathbb{P}^3}$-component of $F_A$ is not constant and hence not Hermitian Yang-Mills.
\end{Rem}
Let us now consider the constant solution case i.e. when $d_Mu=0$ and $P^6$ is K\"ahler Einstein. The $S^1$-invariant Hermitian Yang-Mills condition of Theorem \ref{instantonkahlerreductiontheorem} then becomes:
\begin{Cor}\label{S1invariantInstantonconstantU}
	If $P^6$ is a K\"ahler Einstein manifold, as determined by Theorem \ref{kahlerreductiontheorem} with $d_Mu=0$, then $\underline{A}:=\psi \Theta+A$ defines an $S^1$-invariant Hermitian Yang-Mills connection on $P^6$ if and only if  $A$ defines a $1$-parameter family of Hermitian connections on $M^4$ satisfying
	\begin{align}
		  u^2J_1(d^A_M\psi) &= \partial_H(A),\label{eve1}\\
		(\partial_H(\psi)-C_0) ((aH+b)^2-(pH+q)^2)\!\ \om_1^2 &= +2(aH+b) F_A \w \om_1\nonumber\\ &\ \ \ +2(pH+q) F_A \w\om_0\label{eve2}\\
		&\ \ \  - 2\psi ((a^2-p^2)H+(ab-pq))\!\ \om_1^2.\nonumber
	\end{align}
	In particular, the curvature of $A$ on $M^4$ `evolves' by
	\begin{equation}
		\partial_H(F_A) = u^2 (d^A_M \circ J_1 \circ d^A_M) \psi.\label{evecurvature}
	\end{equation}
	Moreover in the case when $p=q=0$ we get
	\begin{equation}
		(aH+b) \partial^2_{H,H}(\psi)+3a  \partial_H(\psi)-aC_0= u^2 \Delta_A(\psi),\label{parabolicequforpsi}
	\end{equation}
	where %$u$ is given by (\ref{solutionforuEinstein}) and 
	the connection Laplacian $\Delta_A$ is defined by $ \Delta_A(\psi) \!\ \om_{1}^2:=2( d^A_M \circ J_1 \circ d^A_M  \psi) \w \om_1$.
\end{Cor}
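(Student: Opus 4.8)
The plan is to specialise Theorem \ref{instantonkahlerreductiontheorem} to the constant-solution data $\tilde{\om}_1 = (aH+b)\om_1 + (pH+q)\om_0$ of (\ref{constanttildeom}). The easy half is (\ref{eve1}): when $d_M u = 0$ the positive function $u$ depends only on $H$, so $d^c_M(u^2) = J_1 d_M(u^2) = 0$, and (\ref{instantonequ11}) collapses immediately to $u^2 J_1(d^A_M\psi) = \partial_H(A)$.

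The bulk of the work is rewriting the trace equation (\ref{instantonequ12}) as the wedge identity (\ref{eve2}). The key device is that, since $\tilde{\om}_1$ is the K\"ahler form of $g_{\tilde{\om}_1}$, it is self-dual with $*_{\tilde{\om}_1}\tilde{\om}_1 = \tilde{\om}_1$, so that for any $2$-form $\al$ on $M^4$
\[ \al \wedge \tilde{\om}_1 = g_{\tilde{\om}_1}(\al,\tilde{\om}_1)\,\vol_{\tilde{\om}_1} = (\al)_{\tilde{\om}_1}\,\tilde{\om}_1^2, \]
using $\vol_{\tilde{\om}_1} = \tfrac{1}{2}\tilde{\om}_1^2$ and the definition $(\al)_{\tilde{\om}_1} = \tfrac{1}{2}g_{\tilde{\om}_1}(\al,\tilde{\om}_1)$ from Proposition \ref{ricciformproposition}. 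This converts the metric-dependent $\tilde{\om}_1$-component into a purely algebraic wedge product, which I would evaluate using the exterior-algebra identities $\om_0\wedge\om_0 = -\om_1\wedge\om_1$ and $\om_0\wedge\om_1 = 0$. In particular these give $\tilde{\om}_1^2 = ((aH+b)^2 - (pH+q)^2)\om_1^2$ and $\partial_H\tilde{\om}_1 = a\om_1 + p\om_0$. Multiplying (\ref{instantonequ12}) through by $\tilde{\om}_1^2$ and expanding $(F_A - \psi\,\partial_H\tilde{\om}_1)\wedge\tilde{\om}_1$, the cross terms drop out by $\om_0\wedge\om_1 = 0$ and the remaining terms assemble into exactly (\ref{eve2}). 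I expect this bilinear bookkeeping to be the \emph{main obstacle}: it is entirely mechanical once the wedge identity is in hand, but it is the step most prone to sign and coefficient errors, and it is crucial to keep the components taken with respect to $g_{\tilde{\om}_1}$ rather than $g_{\om_1}$.

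For the evolution equation (\ref{evecurvature}) I would differentiate (\ref{eve1}) in $H$. Differentiating $F_A = d_M A + A\wedge A$ gives the Bianchi-type identity $\partial_H(F_A) = d^A_M(\partial_H A)$, and since $u = u(H)$ commutes with $d^A_M$ and $J_1$, substituting $\partial_H A = u^2 J_1(d^A_M\psi)$ yields $\partial_H(F_A) = u^2 (d^A_M\circ J_1\circ d^A_M)\psi$.

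Finally, in the case $p = q = 0$ equation (\ref{eve2}) simplifies, and after dividing by the factor $(aH+b)$ I obtain
\[ (\partial_H\psi - C_0)(aH+b)\,\om_1^2 = 2 F_A\wedge\om_1 - 2a\psi\,\om_1^2. \]
Differentiating once more in $H$, rewriting $2\,\partial_H(F_A)\wedge\om_1 = u^2\Delta_A(\psi)\,\om_1^2$ via (\ref{evecurvature}) and the definition $\Delta_A(\psi)\,\om_1^2 = 2(d^A_M\circ J_1\circ d^A_M\psi)\wedge\om_1$, and cancelling the nowhere-vanishing $\om_1^2$, produces (\ref{parabolicequforpsi}). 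The only care needed here is tracking the product-rule terms so that the coefficient of $\partial_H\psi$ collects correctly to $3a$.
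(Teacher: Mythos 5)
Your proposal is correct and follows exactly the route of the paper's (much terser) proof: substitute $d_Mu=0$ and (\ref{constanttildeom}) into Theorem \ref{instantonkahlerreductiontheorem} to get (\ref{eve1}) and (\ref{eve2}), then differentiate in $H$ to obtain (\ref{evecurvature}) and (\ref{parabolicequforpsi}). The wedge-product bookkeeping you spell out (using $\al\w\tilde{\om}_1=(\al)_{\tilde{\om}_1}\tilde{\om}_1^2$, $\om_0\w\om_1=0$ and $\om_0\w\om_0=-\om_1\w\om_1$) and the coefficient $3a$ all check out.
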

\begin{proof}
	Substituting $d_Mu=0$ and (\ref{constanttildeom}) in Theorem \ref{instantonkahlerreductiontheorem} gives (\ref{eve1}) and (\ref{eve2}). (\ref{evecurvature}) follows by differentiating $F_A$ with respect to $H$ and using (\ref{eve1}). (\ref{parabolicequforpsi}) follows by differentiating (\ref{eve2}) with respect to $H$ and using (\ref{evecurvature}).
\end{proof}
The above system consists of a coupled non-linear set of equations involving $A$ and $\psi$. %It is worth pointing out that if we set $A(0)=A_0$ then one can express $\Delta_A=\Delta_{A_0}+Q$ where $Q$ is a quadratic operator involving the curvature of $A$. 
In the simplest situation however when $G=U(1)$ we can decouple the equations and thus, one can hope to find explicit solutions which is what we investigate in the next section. 

We conclude this section with an analogous result for $S^1$-invariant deformed Hermitian Yang-Mills connections:
\begin{Th}\label{S1invariantdhymtheorem}
	If $P^6$ is a K\"ahler Einstein manifold, as determined by Theorem \ref{kahlerreductiontheorem} with $d_Mu=0$ and $b=p=q=0$, then $\underline{A}:=\psi \Theta+A$ defines an $S^1$-invariant deformed Hermitian Yang-Mills connection on $P^6$ if and only if  $A$ defines a $1$-parameter family of Hermitian connections on $M^4$ satisfying (\ref{eve1}) and
	\begin{align}
		0=\ &\frac{1}{2}a^2(\partial_H\psi)(\psi^2-H^2)-a^2H\psi+\frac{1}{2}(\partial_H \psi)(F_A^2)_{\om_{1}^2}\label{dhymreduced}\\
		&-u^2 (d_M\psi \w d_M^c\psi\w (F_A-a\psi\om_1))_{\om_1^2}\nonumber\\
		&+{a(H-\psi \partial_H \psi) (F_A \w \om_1)_{\om_1^2}}\nonumber
	\end{align}
	where the subscript $\om^2_1$ denotes the $\om^2_1$-component.
\end{Th}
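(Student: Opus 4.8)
\textbf{Proof plan for Theorem \ref{S1invariantdhymtheorem}.}
The plan is to specialise the deformed Hermitian Yang-Mills equation (\ref{dhym}) to the $S^1$-invariant ansatz (\ref{s1invtA}) and the K\"ahler Einstein setting with $d_Mu=0$ and $b=p=q=0$, so that (\ref{constanttildeom}) reduces to $\tilde{\om}_1=aH\om_1$ and the curvature formula (\ref{s1invariantcurvatureform}) simplifies accordingly. The key observation is that the $(1,1)$ constraint (\ref{hym11}) is shared by both the Hermitian Yang-Mills and deformed Hermitian Yang-Mills conditions; from Theorem \ref{instantonkahlerreductiontheorem} this is already equivalent to $F_A\in\Lm^{1,1}(M^4)$ together with (\ref{instantonequ11}), which under the present hypotheses is exactly (\ref{eve1}). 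So the genuinely new content lies entirely in unpacking the cubic equation (\ref{dhym}).

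First I would write out $F_{\underline{A}}$ explicitly under the simplifications $d_Mu=0$, $\partial_H(\tilde{\om}_1)=a\om_1$, obtaining
\[
F_{\underline{A}}=(d^A_M\psi+\partial_H\psi\, dH)\w\Theta+(\psi d_M^c(u^2)-\partial_HA)\w dH+F_A-a\psi\om_1,
\]
and then I would separate the terms containing $\Theta$ and $dH$ from the purely $M^4$ terms. Since $\Theta^2=0$ and $dH^2=0$, and since the volume form on $P^6$ factorises through $\Theta\w dH\w(\text{$4$-form on }M^4)$, the computation of $\frac16 F_{\underline{A}}^3$ and of $F_{\underline{A}}\w\tfrac12\om^3$ reduces to collecting the coefficient of $\Theta\w dH$ in each. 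The hard part will be organising the cubic $F_{\underline{A}}^3$: it produces a sum of wedge products of the four pieces above, and one must carefully track which products survive (only those contributing exactly one $\Theta$ and one $dH$ and a top form on $M^4$) and with what combinatorial multiplicities. Here I expect the term $u^2(d_M\psi\w d_M^c\psi\w(F_A-a\psi\om_1))$ to emerge from the cross-product of the two $\Theta$- and $dH$-linear pieces with the $M^4$-curvature piece, after using (\ref{eve1}) to convert $\partial_HA$ and the $d_M^c(u^2)$-terms into $u^2J_1(d^A_M\psi)$; this substitution is what couples $d_M\psi$ and $d_M^c\psi$ and is the main source of bookkeeping difficulty.

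Once both $\frac16 F_{\underline{A}}^3$ and $F_{\underline{A}}\w\tfrac12\om^3$ are expressed as (coefficient)$\,\cdot\,\Theta\w dH\w\om_1^2$, setting their difference to zero yields a single scalar equation on $M^4$. I would then identify the terms: the $a^2$-contributions come from the $\om$-part of $\om^3$ together with the $a\psi\om_1$ piece of the curvature, giving $\tfrac12 a^2(\partial_H\psi)(\psi^2-H^2)-a^2H\psi$ after using the normalisation (\ref{normalisationsu3}); the $(F_A^2)_{\om_1^2}$ and $(F_A\w\om_1)_{\om_1^2}$ terms arise from pairing $F_A$ against the appropriate components of $\om^3$; and the remaining coupling term is the $u^2(\cdots)$ expression above. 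Matching all of these against the stated right-hand side of (\ref{dhymreduced}) completes the proof. The main obstacle throughout is purely the multilinear algebra of extracting $\om_1^2$-components and keeping the $J_1$-twisted substitution (\ref{eve1}) consistent; I anticipate verifying the coefficient of each monomial (for instance confirming the factor $\tfrac12$ on $(\partial_H\psi)(F_A^2)_{\om_1^2}$ and the sign on the last line) will require the compatibility relation $\om\w\Om^\pm=0$ from (\ref{compatibilitysu3}) and the $SU(2)$ relations satisfied by the $\tilde\om_i$.
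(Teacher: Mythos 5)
Your proposal follows essentially the same route as the paper: the paper's proof likewise notes that the $(1,1)$-condition reduces to $F_A\in\Lm^{1,1}(M^4)$ together with (\ref{instantonequ11}) (which is (\ref{eve1}) here) by appealing to Theorem \ref{instantonkahlerreductiontheorem}, and then obtains (\ref{dhymreduced}) by substituting (\ref{s1invariantcurvatureform}) and (\ref{constanttildeom}) into (\ref{dhym}) and extracting the coefficient of $\Theta\w dH\w\om_1^2$. The only slip is notational: the second term of (\ref{dhym}) is $F_{\underline{A}}\w\tfrac{1}{2}\om^2$, not $F_{\underline{A}}\w\tfrac{1}{2}\om^3$ (the latter would be an $8$-form on a $6$-manifold), but this does not affect the substance of your plan.
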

\begin{proof}
	First since the gauge group is $U(1)$, we have that $d_M^A\psi=d_M\psi$ and $F_A=d_MA$. From the proof of Theorem \ref{instantonkahlerreductiontheorem} we also know that the condition that $\underline{A}$ is a Hermitian connection is equivalent to (\ref{instantonequ11}) and $F_A \in \Lm^{1,1}(M^4)$. It now suffices to rewrite (\ref{dhym}) using (\ref{s1invariantcurvatureform}) and (\ref{constanttildeom}) together with hypothesis that $d_Mu=0$ and $b=p=q=0$, which leads to (\ref{dhymreduced}).
\end{proof}
We shall consider the simplest solutions arising from Theorem \ref{S1invariantdhymtheorem} in Section \ref{dhymsection}.
\subsection{The abelian case}
We specialise Corollary \ref{S1invariantInstantonconstantU} to the case when the gauge group $G$ is $U(1)$ i.e. we look for $S^1$ invariant abelian Hermitian Yang-Mills connection on $P^6$. As we are only aware of explicit complete solutions in the case when $p=q=0$ we shall restrict to this situation. In particular, this applies to the examples (\ref{calabiansatzgenral})-(\ref{EinsteinonHK}) above.

Since $G=U(1)$ we have $[\psi,A]=0$ and it follows that $d^A_M\psi=d_M\psi$ and $\Delta_A(\psi)$ becomes the usual Hodge Laplacian $d^*d(\psi)$ on $(M^4,g_{\om_1})$. We furthermore set $C_0=0$ and look for separable solution with the Higgs field of the form 
\begin{equation}
	\psi=\kappa F,\label{psiansatz}
\end{equation}
where $\kappa$ is only a function of $H$ and $F$ is only a function on $M^4$. 
Then from (\ref{parabolicequforpsi}) we get:
\begin{equation}
	\frac{(aH+b)\!\ \partial^2_{H,H}(\kappa)+3a \!\ \partial_H(\kappa)}{u^2 \!\ \kappa}=  \frac{d^*d(F)}{F}=\mu,\label{separableabeliancase}
\end{equation}
where $\mu$ is a constant and as before $u$ is given by (\ref{solutionforuEinstein}). 
Since $d^*d(F)=\mu \!\!\ F$ it follows that $\mu$ is an eigenvalue of the Laplacian on $(M^4,g_{\om_{1}})$ with $F$ as the corresponding eigenfunction. In particular, we know that if $M^4$ is compact then $\mu$ lies in a discrete set in $[0,+\infty)$, depending only on the isometric class of $(M^4,g_{\om_{1}})$. Thus, to summarise we have shown that:
\begin{Prop}
	When the Higgs field $\psi$ is of the form (\ref{psiansatz}), $S^1$-invariant abelian $SU(3)$-instantons are determined by the spectrum of $(M^4,g_{\om_1})$.
\end{Prop}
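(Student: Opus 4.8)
The plan is to substitute the separable ansatz (\ref{psiansatz}) directly into the reduced abelian Hermitian Yang-Mills system and carry out a separation of variables. First I would use that in the abelian case $G=U(1)$ the commutator $[\psi,A]$ vanishes, so that $d^A_M\psi=d_M\psi$ and the connection Laplacian $\Delta_A$ appearing in Corollary \ref{S1invariantInstantonconstantU} reduces to the ordinary Hodge Laplacian $d^*d$ on $(M^4,g_{\om_1})$; in particular the otherwise non-linear equation (\ref{parabolicequforpsi}) becomes linear in $\psi$. With $C_0=0$ and $p=q=0$ it reads $(aH+b)\partial^2_{H,H}\psi+3a\partial_H\psi=u^2 d^*d(\psi)$.

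Next I would insert $\psi=\kappa(H)F$, with $\kappa$ a function of $H$ alone and $F$ a function on $M^4$ alone, and divide by $u^2\kappa F$. Since $u=u(H)$ is given by (\ref{solutionforuEinstein}), the left-hand side depends only on $H$ while the quotient $d^*d(F)/F$ depends only on the point of $M^4$; both must therefore equal a common separation constant $\mu$, which is exactly (\ref{separableabeliancase}). The spatial factor then satisfies $d^*d(F)=\mu F$, so $F$ is an eigenfunction and $\mu$ an eigenvalue of the Laplacian of $(M^4,g_{\om_1})$; if $M^4$ is compact these $\mu$ form the discrete spectrum. This is precisely the assertion that the instantons in question are determined by the spectrum.

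To complete the description of $\underline{A}$ I would recover the connection part $A$ from the Higgs field via (\ref{eve1}), which in the abelian case reads $u^2J_1(d_M\psi)=\partial_H(A)$ and hence determines $\partial_H A$, and so $A$ itself up to $H$-independent initial data. I would then check consistency with the requirement $F_A\in\Lm^{1,1}(M^4)$ using (\ref{evecurvature}), which shows that $\partial_H(F_A)$ remains of type $(1,1)$, so the $(1,1)$ condition is propagated in $H$ once it holds at one value.

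The step I expect to be the genuine obstacle is not the separation itself, which is routine once the abelian reduction linearises the equations, but rather the implicit claim that these separated local solutions actually assemble into globally well-defined instantons on the total space $P^6$. That requires controlling the behaviour of $\kappa$, as a solution of the singular ODE (\ref{separableabeliancase}), and of $F$ near the fixed-point locus of $X$ and at the ends of the interval on which $H$ is defined. This global analysis is deferred to the explicit cases treated in the later sections and is not needed for the present parametrisation statement, which only records that the admissible $(F,\mu)$ are governed by the eigendata of $(M^4,g_{\om_1})$.
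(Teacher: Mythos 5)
Your proposal is correct and follows essentially the same route as the paper: in the abelian case $\Delta_A$ reduces to the Hodge Laplacian, the separable ansatz $\psi=\kappa F$ turns (\ref{parabolicequforpsi}) into the separated equation (\ref{separableabeliancase}), and $F$ is forced to be an eigenfunction of $(M^4,g_{\om_1})$ with eigenvalue the separation constant $\mu$. Your closing remarks about recovering $A$ from (\ref{eve1}) and deferring the global well-definedness of $\kappa$ match the paper's own caveats following the proposition.
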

%The above implies that these abelian instantons are determined by the spectrum of $(M^4,g_{\om_1})$. 
It is easy to see, after redefining the coordinate $H$, that (\ref{parabolicequforpsi}) is a linear elliptic operator for $\psi$ (when $G$ is abelian), so with suitable boundary conditions one might be able to argue that the separable solutions (\ref{psiansatz}) in fact generate a basis for all abelian $SU(3)$-instantons. Our goal in this paper however is simply to construct explicitly such solutions so we shall not address the problem if our solutions are exhaustive.
We should point out that one might not always be able to find solutions $\kappa$ for all the values of $\mu$ in the spectrum of $M^4$, so only a distinguished subset of the spectrum will parametrise such $SU(3)$-instantons (see Remark \ref{CP3remark} below). 
The natural question of interest becomes for which values of $\mu$ can one find such $SU(3)$ instantons and given such $\mu$, what is the moduli space of solutions? 
In view of Corollary (\ref{killingvectorfieldcorollary}) we already know that when $P^6$ is a Calabi-Yau manifold there is at least one Hermitian Yang-Mills connection, which corresponds to $\mu=0$ if $\mathcal{L}_X\Om=0$.

The rest of this paper is mainly dedicated to explicitly describing the aforementioned $SU(3)$ instantons in cases when $M^4$ is a homogeneous space. Sections \ref{CP2section} and \ref{S2S2instantonsection} are concerned with the situation when $M^4=\C \mathbb{P}^2$ and $\C \mathbb{P}^1 \times \overline{\C \mathbb{P}^1}$ with $P^6$ as the corresponding canonical bundle with the Calabi ansatz metric and in Section \ref{T4section} we consider the other K\"ahler Einstein metrics arising from (\ref{CP3asS1invariant}), (\ref{MoreEinstein}) and (\ref{EinsteinonHK}). In Section \ref{dhymsection} we construct the simplest solutions arising from Theorem \ref{dhymtheorem}.

%We shall now apply the general strategy described in this section to certain cohomogeneity one Calabi-Yau $3$-folds. This will then enable us to reduce the Hermitian Yang-Mills condition to a system of PDEs on certain homogeneous K\"ahler $4$-manifolds.
\begin{Rem}\label{CP3remark}
When $P^6=\C\mathbb{P}^3$ (see (\ref{CP3asS1invariant}) above) we know that all holomorphic line bundles are given by $\mathcal{O}_{\C\mathbb{P}^3}(k)$ for $k\in\mathbb{Z}$: these are determined by integer multiples of the Fubini-Study form which themselves correspond to the curvature forms of the Hermitian Yang-Mills connections. The Donaldson-Uhlenbeck-Yau correspondence asserts that these are all the abelian Hermitian Yang-Mills connections \cite{Donaldson1987,UhlenbeckYau1986}. Since $C_0=0$ only for the trivial line bundle, we already know that $\psi=0$ is the only solution to (\ref{separableabeliancase}) which gives a globally well-defined $SU(3)$-instanton. This shows that although locally, by Picard-Lindel\"of theorem, we know that there exist solutions $\kappa$ to (\ref{separableabeliancase}) these will in general not give rise to globally well-defined instantons on $P^6$. In fact when $P^6=\C\mathbb{P}^3$ one can show that the general solution $\kappa$ is explicitly given in terms of certain hypergeometric functions which will always have singularities (see sub-sect. \ref{thegeneralmucase} and \ref{instantononCP3} below).
\end{Rem}
\section{Abelian instantons on the canonical bundle of $\C\mathbb{P}^2$}\label{CP2section}
As a way of making the construction described in the previous section concrete, we shall now specialise to the case when $P^6$ is the canonical bundle of $M^4=\C\mathbb{P}^2$ endowed with the Calabi ansatz structure. We begin by giving coordinate description of the latter which will come in handy later on.

\subsection{The Calabi-Yau metric on $\C^3$}\label{calabiyauonC3section}
The flat metric on $\R^6$ viewed as a conical metric can be written as
\[g_{\R^6}=dr^2 + r^2  g_{S^5},\]
where $g_{S^5}$ denotes the round metric on $S^5$.
In view of the Hopf fibration $S^1 \hookrightarrow S^5 \to \C \mathbb{P}^2$ we can define a connection $1$-form $\theta$ on $S^5$ such that $d\theta = 2\om_{\C\mathbb{P}^2}$ and then we have
\begin{gather}
	g_{\R^6}=dr^2 + r^2  (\theta^2 + g_{\C\mathbb{P}^2}),\\
	\om_{\R^6}= r dr \w \theta + r^2 \om_{\C\mathbb{P}^2}. 
\end{gather}
Here the subscript $\C\mathbb{P}^2$ refers to the standard Fubini-Study K\"ahler structure  (normalised so that $g_{\C\mathbb{P}^2}$ has scalar curvature $24$). Note that $\C\mathbb{P}^2$ is just the K\"ahler reduction of $\R^6\cong\C^3$ by the diagonal action of $U(1)\subset U(3)$. Viewing $\C \mathbb{P}^2$ as a cohomogeneity one manifold under the action of $SU(2)$ we can rewrite the above as
\begin{gather*}
	g_{\R^6}=dr^2 + r^2  (\theta^2 +\frac{1}{4t(1-t)}dt^2+ t(1-t)\sigma_1^2 + t(\sigma_2^2+\sigma_3^2)),\\
	\om_{\R^6}= r dr \w \theta + r^2  \big( \frac{1}{2} \sigma_1 \w dt + t \sigma_2 \w \sigma_3\big). 
\end{gather*}
where $\theta:=dy-t\sigma_1$, with $y$ denoting the local coordinate on the Hopf $S^1$ fibre, and $\sigma_i$ are the usual left invariant coframe on $S^3$ satisfying $d\sigma_1=-2 \sigma_{23}$ (+ cyclic permutation of the indices).
It will be useful for later (see sub sect. \ref{themu12case}) to also have the following more concrete description: using Euler angles $(x_1,x_2,x_3)$ on $S^3$ we can express the above left invariant $1$-forms as
\begin{gather*}
	\sigma_1 = \frac{1}{2} (\cos(x_1) dx_2+\sin(x_1)\sin(x_2) dx_3),\\
	\sigma_2 = -\frac{1}{2} (\sin(x_1) dx_2-\cos(x_1)\sin(x_2) dx_3), \\
	\sigma_3 = \frac{1}{2} ( dx_1+\cos(x_2) dx_3).
\end{gather*}
Observe that there are two singular orbits for this cohomogeneity one action: an $S^2$ (when $t=1$ corresponding to the collapsing of the $S^1$ generated by the vector field $\partial_{\sigma_3}$) and a point (when $t=0$ corresponding to the collapsing of the principal orbit $S^3$ itself). The latter is easily seen by setting $t=\cos^2(s)$ so that
\[ds= -\frac{dt}{2 t^{1/2}(1-t)^{1/2}}\]
and hence we have
\begin{align*}
	g_{\R^6} =&\ dr^2 + r^2\theta^2 + r^2 \big(ds^2+\sin^2(s)\cos^2(s) \sigma_1^2
	+\cos^2(s) (\sigma_2^2+\sigma_3 ^2).
\end{align*}
We denote by $\partial_{\sigma_i}$ the left-invariant vector field on $S^3$ dual to $\sigma_i$. %\textcolor{red}
{Here the local coordinate $y$ on the $S^1$ fibre is required to have period $2\pi$. %This essentially amounts to saying that asymptotically $\C^3$ is being viewed as $\mathcal{O}_{\C\mathbb{P}^2}(-1)$. 
If $y$ had period $2\pi/k$ for $k \in \mathbb{Z}$ then the asymptotic topology would be that of $\mathcal{O}_{\C\mathbb{P}^2}(-k)$ i.e. $\C^3/\mathbb{Z}_k$ cf. \cite{PagePope1985}.}
\subsection{The Calabi-Yau metric on $\mathcal{O}_{\mathbb{C}\mathbb{P}^2}(-3)$}
As seen in subsection \ref{constantsolutionsection} the Calabi ansatz more generally yields the asymptotically conical Calabi-Yau metric
\begin{equation}
	g_{K}=\Big(\frac{r^6}{r^6-C}\Big) dr^2 + \Big(\frac{ r^6 -C}{r^4}\Big) \theta^2 + r^2 g_{\C\mathbb{P}^2}.\label{calabiyaumetric}
\end{equation}
on $K=\mathcal{O}_{\mathbb{C}\mathbb{P}^2}(-3)$ and we already noted above that when $C=0$ we recover the flat metric. In contrast to the flat metric however $g_{K}$ has holonomy group \textit{equal} to $SU(3)$. %We should point out that Calabi's argument in \cite{Calabi1957} only gives the metric implicitly and to best of our knowledge the above explicit expression for the metric was first found by Page and Pope in \cite[(6.6)]{PagePope1985}. They also show that for $C>0$ and $r \in [C^{1/6},\infty)$ this metric is complete.

Note that the symplectic form is the same irrespective of the choice of the constant $C$ i.e. the associated K\"ahler form is still given by
\begin{equation}
	\om_{K}= r dr \w \theta + r^2  \Big( \frac{1}{2} \sigma_1 \w dt + t \sigma_2 \w \sigma_3\Big). \label{omCY}
\end{equation}
We can now define a natural constant length $(3,0)$-form by
\[\psi^++i\psi^-= \big(\frac{r^5 dr}{(r^6-C)^{1/2}}+i (r^6-C)^{1/2}\theta \big)\w ( \frac{dt}{2(1-t)^{1/2}}-i t (1-t)^{1/2} \sigma_1 )\w (\sigma_2 +i \sigma_3 ).\]
The latter is however not holomorphic, but since $g_{K}$ is a Calabi-Yau metric we know that there exists an orthonormal rotation of $\psi^\pm$ which defines a pair of parallel $3$-forms i.e. there is a real function $f$ such that $e^{if}(\psi^++i\psi^-)$ is parallel, or equivalently closed cf. Theorem \ref{su3torsionthm}. %It is worth mentioning that in general there are no techniques to determine $f$ from the knowledge of $g$ and $\om$
Solving the latter first order PDE we find: 
\begin{Prop}\label{holomorphicvolumeformCP2}
	The pair
	\begin{gather}
		\Om^+=\cos(3y)\psi^+ - \sin(3y) \psi^-,\\
		\Om^-=\sin(3y)\psi^+ + \cos(3y) \psi^-,
	\end{gather}
	defines a holomorphic volume form i.e. $d\Om^\pm=0$ and satisfies conditions (\ref{compatibilitysu3}) and (\ref{normalisationsu3}).
\end{Prop}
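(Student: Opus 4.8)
The plan is to verify the three asserted properties for the rotated pair $(\Om^+,\Om^-)$ directly, exploiting the fact that $\psi^\pm$ already has the correct algebraic type. First I would observe that the compatibility condition (\ref{compatibilitysu3}) and the normalisation (\ref{normalisationsu3}) are \emph{pointwise algebraic} relations. Since $\cos(3y)$ and $\sin(3y)$ are functions of the single fibre coordinate $y$, the rotation
\[
\begin{pmatrix}\Om^+\\ \Om^-\end{pmatrix}=\begin{pmatrix}\cos(3y) & -\sin(3y)\\ \sin(3y) & \cos(3y)\end{pmatrix}\begin{pmatrix}\psi^+\\ \psi^-\end{pmatrix}
\]
is an orthogonal rotation in the $2$-plane spanned by $\psi^+,\psi^-$ at each point. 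I would first record that $\psi^++i\psi^-$, being a decomposable $(3,0)$-form of constant norm built from the orthonormal complex frame implicit in (\ref{calabiyaumetric}), automatically satisfies $\om_K \w \psi^\pm=0$ and $\frac{2}{3}\om_K^3=\psi^+\w\psi^-$; these follow from the wedge relations $\sigma_i\w\sigma_j=\delta_{ij}\,\tilde\om_1\w\tilde\om_1$ and the explicit form of $\om_K$ in (\ref{omCY}). Because the defining quantities of an $SU(3)$-structure are invariant under the $SO(2)$-rotation of $(\psi^+,\psi^-)$ — one has $\Om^+\w\Om^-=\psi^+\w\psi^-$ and $\om_K\w\Om^\pm=\cos(3y)\,\om_K\w\psi^+\mp\cdots=0$ — conditions (\ref{compatibilitysu3}) and (\ref{normalisationsu3}) transfer immediately from $\psi^\pm$ to $\Om^\pm$.

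The substantive claim is that $d\Om^\pm=0$, and this is where the specific constant $3$ in $\cos(3y),\sin(3y)$ must do its work. By Theorem \ref{su3torsionthm} it suffices to prove closedness; I would compute $d\Om^+$ and $d\Om^-$ using $d(\cos(3y))=-3\sin(3y)\,dy$ together with a prior computation of $d\psi^\pm$. The key input is that $\psi^\pm$ is not closed but its failure to be closed is controlled: one finds (by direct exterior differentiation, using $d\theta=2\om_{\C\mathbb{P}^2}$, $d\sigma_1=-2\sigma_{23}$ and the relation $\theta=dy-t\sigma_1$) that $d\psi^+$ and $d\psi^-$ are each proportional to $dy\w(\text{a fixed }3\text{-form})$, and moreover that $\mathcal{L}_{\partial_y}(\psi^++i\psi^-)=-3i(\psi^++i\psi^-)$, i.e. the fibre rotation acts on the complex volume form by the eigenvalue $-3i$. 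This is exactly the statement $k=3$ in Lemma \ref{permutingompluslemma}: the Killing field $\partial_y$ (equivalently $X$) permutes $\psi^\pm$ with constant $k=3$, which is the geometric origin of the factor $3$ and reflects that the anticanonical degree of $\C\mathbb{P}^2$ is $3$.

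The heart of the calculation is then the cancellation: writing $d\Om^+=\cos(3y)\,d\psi^+-\sin(3y)\,d\psi^- -3\sin(3y)\,dy\w\psi^+-3\cos(3y)\,dy\w\psi^-$, the two contributions — the one from differentiating the trigonometric coefficients and the one from $d\psi^\pm$ — must cancel identically. I expect the main obstacle to be bookkeeping rather than conceptual: one has to confirm that $d\psi^+=3\,dy\w\psi^-$ and $d\psi^-=-3\,dy\w\psi^+$ (or the analogous pair with the correct signs dictated by our $d^c$ convention), which is precisely the infinitesimal form of $\mathcal{L}_{\partial_y}(\psi^++i\psi^-)=-3i(\psi^++i\psi^-)$ combined with $\partial_y\ip\psi^\pm=0$ via Cartan's formula $\mathcal{L}_{\partial_y}=d\circ(\partial_y\ip)+(\partial_y\ip)\circ d$. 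Substituting these yields
\[
d\Om^+=\bigl(3\cos(3y)-3\sin(3y)\cdot 0-3\cos(3y)\bigr)\,dy\w\psi^-+\cdots=0,
\]
and similarly $d\Om^-=0$. The only genuine labour is verifying the two identities for $d\psi^\pm$ by a careful but routine exterior-derivative computation in the coframe $\{dr,\theta,dt,\sigma_1,\sigma_2,\sigma_3\}$; once those are in hand the closedness, and hence by Theorem \ref{su3torsionthm} the fact that $\Om^\pm$ is parallel, follows from the trigonometric cancellation above.
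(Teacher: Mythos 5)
Your main line is the paper's own: the entire published proof consists of computing $d\psi^{\pm}=\pm3(\theta+t\sigma_1)\w\psi^{\mp}=\pm3\,dy\w\psi^{\mp}$ and letting the trigonometric cancellation finish the job, so your expansion of $d\Om^\pm$ and the cancellation are exactly right, and your observation that conditions (\ref{compatibilitysu3})--(\ref{normalisationsu3}) are pointwise $SO(2)$-invariant correctly disposes of the part the paper leaves implicit.

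However, the ``conceptual'' justification you give for the identities $d\psi^\pm=\pm3\,dy\w\psi^\mp$ is wrong on both counts. First, $\partial_y\ip\psi^\pm\neq0$: the $(1,0)$-factor of $\psi^++i\psi^-$ contains $\theta=dy-t\sigma_1$, so $\partial_y\ip(\psi^++i\psi^-)=i(r^6-C)^{1/2}\bigl(\tfrac{dt}{2(1-t)^{1/2}}-it(1-t)^{1/2}\sigma_1\bigr)\w(\sigma_2+i\sigma_3)$, which is nonzero. Second, $\mathcal{L}_{\partial_y}(\psi^++i\psi^-)=0$, not $-3i(\psi^++i\psi^-)$: every ingredient of $\psi^\pm$ ($dr$, $dt$, $\theta$, the $\sigma_i$, and coefficients depending only on $r,t$) is manifestly $y$-invariant. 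The eigenform of $\mathcal{L}_{\partial_y}$ is the \emph{rotated} form $\Om=\Om^++i\Om^-$ --- that is the content of Lemma \ref{permutingompluslemma} and of the Remark following the proposition --- not $\psi^++i\psi^-$; indeed Cartan's formula here reads $0=d(\partial_y\ip\psi^+)+\partial_y\ip d\psi^+$ with a nontrivial first term, and does not deliver $d\psi^\pm$ by itself. This does not sink the proof, since you correctly state that the two identities must ultimately be checked by direct exterior differentiation in the coframe (and they do hold, as the paper records); but the Lie-derivative heuristic, and the attribution of the constant $k=3$ of Lemma \ref{permutingompluslemma} to $\psi^\pm$ rather than to $\Om^\pm$, should be deleted or corrected.
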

\begin{proof}
	It suffices to verify that $\Om^\pm$ as defined in the proposition are indeed closed. 
	Computing we find that
	\begin{align*}
		d\psi^{\pm} &= \pm3 (\theta+t\sigma_1) \w \psi^{\mp},\\
		&= \pm 3\ \! dy \w \psi^{\mp},
	\end{align*}
	where we used the definition of $\theta$ for the last equality. The result now follows immediately from this.
\end{proof}
To the best of our knowledge the above explicit expression for the holomorphic volume form was previously unknown. %\textcolor{red}
{Observe that in contrast to case when $P^6=\C^3$, we now require $y$ to have period $2\pi/3$ and hence the Calabi-Yau metric will be globally well-defined on $\mathcal{O}_{\C \mathbb{P}^2}(-3)$ i.e. the canonical bundle of $\C\mathbb{P}^2$.}
Thus, we have given an explicit description of the $SU(3)$-structure on $\mathcal{O}_{\C \mathbb{P}^2}(-3)$ arising from the Calabi ansatz. This will allow us later on to construct special Lagrangians on $\mathcal{O}_{\C\mathbb{P}^2}(-3)$. We should emphasise that the rather simple nature of the calculation in the above proof is due to the fact that we chose a good coordinate system on $\C\mathbb{P}^2$ to express the K\"ahler structure (which was in fact done in hindsight to simplify the proof). In general solving the first order PDE for $f$ explicitly can be rather involved.
%i.e. Lagrangian submanifolds calibrated by $\Om^+$. 
\begin{Rem}
Note that although $\partial_y$ is a holomorphic Killing vector field  %i.e. $\mathcal{L}_{\partial_y}g_{K}= \mathcal{L}_{\partial_y}\om_{K}=0$ %(in fact Hamiltonian since $\partial_y \ip \om_{CY}=-rdr$) 
it does not preserve the holomorphic volume form $\Om$: $\mathcal{L}_{\partial_y}\Om^\pm=\mp3\Om^\mp$. This illustrates Lemma \ref{permutingompluslemma}.
\end{Rem}
\subsection{Instantons: Abelian examples}
We now want to look for abelian instantons which are invariant under the $S^1$ action generated by $X=\partial_y$, but before doing so we first describe a few simple motivating examples.
\subsubsection{Elementary examples}
It is easy to verify that the following are all abelian $SU(3)$-instantons on $\mathcal{O}_{\C\mathbb{P}^2}(-3)$: %i.e. Hermitian Yang connections whose curvature $2$-forms belong to the sub-bundle $\mathfrak{su}(3)\subset \mathfrak{so}(6)\cong\Lm^2$):
\begin{align}
	\underline{A}= r^{-4}\theta\ &\Longrightarrow \ \|F_A\|^2_{g_{K}}=24 r^{-12},\label{firstone}\\
	\underline{A}= t^{-1}\sigma_1\ &\Longrightarrow \ \|F_A\|^2_{g_{K}}=8 t^{-4}r^{-4},\label{pullbackexample1}\\
	\underline{A}= (1-t^{-1}) \sigma_2\ &\Longrightarrow \ \|F_A\|^2_{g_{K}}=8 (1-t) t^{-4}r^{-4}.\label{pullbackexample2} %,\\	\underline{A}= (1-t^{-1}) \sigma_3\ &\Longrightarrow \ \|F_A\|^2_{g_{K}}=8 (1-t) t^{-4}r^{-4}.
\end{align}
Observe that examples (\ref{pullbackexample1}) and (\ref{pullbackexample2}) are both pullbacked from $\C \mathbb{P}^2$, i.e. $\psi=0$, and moreover blow-up when $t=0$ i.e. over the point singular orbit of $\C \mathbb{P}^2$ (viewed as a cohomogeneity $1$ manifold as described above). Put differently these correspond to meromorphic connection forms on $\C\mathbb{P}^2$ with isolated pole at $t=0$. Hence as instantons on $P^6$ they blow up along the $\C$ fibre over this point which is a holomorphic curve (i.e. calibrated by $\om_K$). This is consistent with the results in \cite{Tian2000} which assert that the bubbling of instantons occurs along calibrated submanifolds, although strictly speaking our instantons have infinite Yang-Mills energy and $P^6$ is non-compact. These examples could be considered as the possible limits.

By contrast consider now the example given by (\ref{firstone}) which is not pullbacked from $\C\mathbb{P}^2$ and is actually well-defined for $r \in [C^{1/6},\infty)$. 
%\textcolor{blue}
{Note however that for the flat metric i.e when $C=0$ this instanton has a singularity at the origin.} Its curvature form is given by
\[F_{\underline{A}}=2r^{-6} (-2r dr \w \theta + r^2  \om_{\C\mathbb{P}^2})\]
and hence one finds that
\begin{align*}
	\mathrm{YM(\underline{A})} &= \int_{\mathcal{O}_{\C\mathbb{P}^2}(-3)} \|F_{\underline{A}}\|^2_{g_K} \vol_{K}\\
	&= 24 \!\ \frac{2\pi}{3} \!\ \vol(\C\mathbb{P}^2)  \!\ \int_{C^{1/6}}^{+\infty} r^{-7}dr \\
	&= \frac{8 \pi}{3 C}\!\ \vol(\C\mathbb{P}^2).
\end{align*}
The energy of the instanton concentrates/blows up at the origin in $\C^3$ i.e. the vertex of the cone while on the canonical bundle it is distributed on the zero section $\C \mathbb{P}^2$ (which is calibrated by $\om_K^2/2$). Thus, this instanton essentially distinguishes the asymptotically Euclidean metrics $g_{K}$ and $g_{\R^6}$. In fact this is the case for all such Calabi ansatz metrics i.e. the energy of the instanton (\ref{firstone}) corresponds to the size of the Fano manifold $M^4\hookrightarrow P^6$ viewed as the zero section. 

Aside from $SU(3)$ instantons, one can consider general Hermitian Yang-Mills connections.
Here are $2$ simple examples illustrating Corollary \ref{killingvectorfieldcorollary}.
%Before giving such examples we recall the following classical result.
%\begin{Prop}\label{killingvectorprop}
%	Let $X$ be a Killing vector field on a Riemannian manifold $(M^n,g)$ with holonomy \text{equal} to $H$ then the $2$-form $dX^\flat$, where $X^{\flat}:=g(X,\cdot)$, in fact belongs to Lie algebra of the normalizer of $H$ in $SO(n)$.
%\end{Prop}
%Note that although the vector fields $\partial_y$ and $\partial_{\sigma_1}$ are both Killing vector fields their dual $1$-forms do not define $SU(3)$-instantons. This is because the normalizer of $SU(3)$ in $SO(6)$ is isomorphic  $U(3)$ and from Proposition \ref{killingvectorprop} the exterior derivative of the dual $1$-forms lie in $\Lm^2_1 \oplus \Lm^2_8$. For both of the latter vector fields the associated components in $\Lm^2_1=\langle\om_{CY}\rangle$ are non-trivial. 
The vector fields $\partial_y$ and $\partial_{\sigma_1}$ are both holomorphic Killing vector fields and neither preserve $\Om$. Hence we have:
\begin{gather*}
	d(\partial_y)^\flat \w \Om^+ =0,\\
	d(\partial_y)^\flat \w \om_{K} \w \om_{K} = +2 \!\ \om_{K} \w \om_{K} \w \om_{K},
\end{gather*}
and 
\begin{gather*}
	d(\partial_{\sigma_1})^\flat \w \Om^+ =0,\\
	d(\partial_{\sigma_1})^\flat \w \om_{K} \w \om_{K} = -\frac{4}{3}\!\ \om_{K} \w \om_{K} \w \om_{K}.
\end{gather*}
Next we consider general $SU(3)$-instantons.
%Note that by taking linear combinations of $r^{-4}\theta$ and $(\partial_y)^\flat=r^{-4}(r^6-C) \cdot \theta$ we get the family of HYM connection
%\[A=(c_1 \cdot r^2+\frac{c_2}{r^4})\cdot \theta.\]
\subsubsection{The $S^1$-invariant instanton condition}\label{S1invariantInstantonAntiCanonical}
We now specialise Corollary \ref{S1invariantInstantonconstantU} to our present context. We modify slightly the coordinates for convenience and consider $\underline{A}$ of the form
\[\underline{A}=f(r)\!\  \theta + A(r),\]
where in the notation introduced in the previous section $\Theta=-\theta$, $H=r^2/2$, $\tilde{\om}_1=r^2 \om_{\C \mathbb{P}^2}$ and $\psi=-f$; so $a=2$ and $b=p=q=0$. The $S^1$-invariant $SU(3)$-instanton condition (\ref{eve1}) and (\ref{eve2}) then becomes:
%With the above examples in mind we shall now initiate a more systematic study the $S^1$-invariant instanton equations. More precisely, we consider a connection $1$-form $\underline{A}$ on $\mathcal{O}_{\C \mathbb{P}^2}(-3)$ with gauge group $G$ of the form
%\[\underline{A}=f(r) \cdot \theta + A(r),\]
%where $f(r)$ is a Higgs field on $\C \mathbb{P}^2$ for each $r$ i.e. $f$ is a $\partial_y$-invariant section of the associated bundle to $\mathfrak{g}$ on $\mathcal{O}_{\C \mathbb{P}^2}(-3)$ and similarly $A(r)$ defines a connection $1$-form on $\C \mathbb{P}^2$ for each $r$. We denote by $d_M$ the exterior differential on $\C  \mathbb{P}^2$ i.e. without differentiating with respect to $r$. From Theorem \ref{instantonkahlerreductiontheorem} we see that the $SU(3)$ instanton condition:
%\begin{gather*}
%	F_{\underline{A}} \w \Om^+ = 0,\\
%	F_{\underline{A}} \w \om_{CY} \w \om_{CY} = 0,
%\end{gather*}
%is equivalent to asking that the curvature form $F_A:=d_MA+A\w A$ is of type $(1,1)$ on $\C \mathbb{P}^2$ for each $r$ i.e. $A(r)$ is a Hermitian connection on $\C \mathbb{P}^2$ for each $r$, and that the following system holds:
\begin{gather}
	\frac{\partial A}{\partial r}= - \frac{r^5}{r^6-C}  I (d^A_Mf),\label{instantonequ1}\\
	2 (F_A)_{\om_{\C\mathbb{P}^2}} + 4f + r \frac{\partial f}{\partial r} =0,\label{instantonequ2}
\end{gather}
where $I$ denotes the standard Fubini-Study complex structure and as before $A(r)$ is a $1$-parameter family of Hermitian connection on $\C \mathbb{P}^2$. In particular, we have: %, $d_M^A$ denotes the exterior covariant derivative on $\C \mathbb{P}^2$ with respect to $A(r)$ and $(F_A)_{\om_{FS}}$ is the $\om_{FS}$ component of $F_A$. In particular, we see immediately that if $A$ is an ASD-instanton on $\C \mathbb{P}^2$ (independent of $r$ so that $f=0$) then it also defines an $SU(3)$ instanton on $\mathcal{O}_{\C \mathbb{P}^2}(-3)$ (as expected of course since $\mathfrak{su}(2)\subset \mathfrak{su}(3)$). \textcolor{blue}{In the notation introduced in the previous section $\Theta=-\theta$, $dH=rdr$, $\tilde{\om}_1=r^2 \om_{FS}$ and $\psi=-f$; so $a=2$ and $b=p=q=0$}
%Differentiating $F_A$ with respect to $r$ and using (\ref{instantonequ1}) we find that
%Then (\ref{evecurvature}) and (\ref{parabolicequforpsi}) become
\begin{gather}
	\frac{\partial}{\partial r}(F_A) = -\frac{r^5}{r^6-C} (d^A_M \circ I \circ d^A_M f),\\
	%\end{gather}
	%and %Differentiating (\ref{instantonequ2}) with respect to $r$ and substituting the above we now get
	%\begin{gather}
	%	-\frac{2 r^5}{r^6-C}\cdot (d^A_M \circ I \circ d^A_M f) \w \om_{FS} + 5 \frac{\partial f}{\partial r} \cdot \om_{FS}^2+ r \frac{\partial^2 f}{\partial r^2} \cdot \om_{FS}^2 =0. \label{fundamentalpde}
	\frac{r^5}{r^6-C} \Delta_A(f) = r \frac{\partial^2 f}{\partial r^2} + 5 \frac{\partial f}{\partial r} . \label{fundamentalpde}
\end{gather}
%Since $A$ is a Hermitian connection for each $r$, the first term should essentially correspond to the connection Laplacian of $A$ (maybe?).

%\subsection{Instantons: More general abelian examples}
%Since the case with general gauge group $G$ is rather complicated we first look at the simplest case when $G=U(1)$ (so that $d^A_M=d_M$). 
Note that equation (\ref{fundamentalpde}) is not in general linear in $f$ since the connection Laplacian $\Delta_A$ itself depends on $f$ via (\ref{instantonequ1}), except when $G$ is abelian.
So considering the case when $G=U(1)$ and looking for solution of the form 
\begin{equation}
	f=\kappa(r)\!\ F,\label{ansatz}
\end{equation} 
where $\kappa(r)$ is only a function of $r$ and $F$ only a function on $\C \mathbb{P}^2$, we find that  %Substituting (\ref{ansatz}) into (\ref{fundamentalpde}) which is now just a PDE for $f$ independent of $A$, and using the fact that the Hodge Laplacian on $\C \mathbb{P}^2$ is given by 
%\begin{equation*}
%d^*dG=2 (d_M\circ I\circ d_MG)_{FS}
%\end{equation*}
(\ref{separableabeliancase}) becomes
\begin{equation}
	(r^6-C) \Big( \frac{5\kappa'+r \kappa''}{r^5  \kappa}\Big)= \frac{d^*dF}{F} =\mu.\label{fundamentalode}
\end{equation}
%where $\lambda$ is a constant. Hence $G$ has to be an eigenfunction of the Laplacian on $\C\mathbb{P}^2$ with eigenvalue $\lambda$. 
It is well-known that the eigenvalues of the Laplacian on $\C\mathbb{P}^2$ are given by 
$$\mu_k=4k(k+2)$$
for $k \in \mathbb{Z}^+_0$ and have multiplicities 
$$m_{k}= 4(k+1)^3$$
for $k\in \mathbb{Z}^+$ cf. \cite[Chap. 3 Proposition C.III.1]{Berger1971spectre}. So we only need to solve for $\kappa(r)$ when $\mu=4k(k+2)$. From (\ref{instantonequ1}) the connection $\underline{A}$ is then given by
\begin{equation}
	\underline{A}=\kappa  F  \theta- \Big(\int \frac{r^5 \kappa}{r^6-C}\ dr\Big) (d^cF)+A_0,\label{Abarsolution}
\end{equation}
where $A_0$ is a Hermitian connection on $\C \mathbb{P}^2$. From (\ref{instantonequ2}), and using the fact that $\kappa$ and $F$ solve (\ref{fundamentalode}), it follows that we need
\begin{equation}
	(d A_0)_{\om_{\C\mathbb{P}^2}}=c_0  F,\label{A0condition}
\end{equation}
where $c_0$ is a constant. If $\mu \neq 0$ then $c_0$ is determined by the choice of constant of integration in (\ref{Abarsolution}). So henceforth we shall assume that the constant on integration has been fixed so that $A_0=0$. %\textcolor{blue}{Otherwise when $c_0$ is non-zero, we need to solve for $A_0$ satisfying (\ref{A0condition}). The latter condition is saying that the $\om_{\C\mathbb{P}^2}$-component of $F_{A_0}$ is an eigenfunction of the Laplacian, which if constant reduces to saying that $A_0$ is a Hermitian Yang-Mills connection on $M^4$.}
The curvature form is then given by
\begin{align}
	F_{\underline{A}}=\ &\kappa' F dr \w \theta - \Big(\int \frac{r^5 \kappa}{r^6-C}\ dr\Big)dd^cF+\kappa dF \w \theta\label{curvatureabelian}\\
	& -\frac{r^5 \kappa}{r^6-C}\ dr \w d^cF+2\kappa F \om_{\C\mathbb{P}^2}\nonumber
\end{align}
and in view of Chern-Weil theory it defines a cohomology class in $H^2(P^6,\mathbb{Z})\cong H^2(\C\mathbb{P}^2,\mathbb{Z})\cong \mathbb{Z}$.
Note that for the $U(1)$ gauge group, two connections $\underline{A}_1$ and $\underline{A}_2$ are gauge equivalent if and only if $\underline{A}_2-\underline{A}_1$ is exact and hence $F_{\underline{A}_1}=F_{\underline{A}_2}$. An inspection of (\ref{curvatureabelian}) shows that this cannot happen if $F$ are distinct eigenfunctions, so indeed (\ref{Abarsolution}) determines distinct instantons.
\begin{Rem}
	Note that the above computations apply to any Calabi-Yau $3$-fold arising from the Calabi ansatz by replacing $\C \mathbb{P}^2$ with a suitable Fano $4$-manifold (see (\ref{calabiansatzgenral}) above) and hence changing $\mu$ accordingly. We consider explicitly the case when $M^4=S^2 \times S^2$ in the Section \ref{S2S2instantonsection} below.
\end{Rem}
\subsubsection{The $\mu=0$ case}\label{k=0case}
If $\mu=0$ then without loss of generality we can set $F=1$, and solving for $\kappa$ we find
\[\kappa = \frac{c_1}{r^4}+c_2.\]
Observe that in this case the solution is independent of $C$. The connection form $\underline{A}$ is then given by
\[\underline{A}= \Big(\frac{c_1}{r^4}+c_2 \Big) \theta + A_0,\]
where $A_0$ is a Hermitian connection on $\C \mathbb{P}^2$ independent of $r$. Substituting in (\ref{instantonequ2}) we find that we need
\[(F_{A_0})_{\om_{\C\mathbb{P}^2}}=(dA_0)_{\om_{\C\mathbb{P}^2}}=-2 c_2\]
i.e. $A_0$ is a Hermitian Yang-Mills instanton on $\C \mathbb{P}^2$. So if $c_1=0$ then $F_{\underline{A}}=(F_A)^{1,1}_0 \in \mathfrak{su}(2) \subset \mathfrak{su}(3)$ i.e. we are reduced to anti-self-dual instantons on $\C\mathbb{P}^2$. On the other hand if $c_2=0$ then we recover example (\ref{firstone}) above: moreover this is the only $S^1$ invariant abelian instanton with finite Yang-Mills energy as we shall see in sub-sect. \ref{thegeneralmucase} below.

\subsubsection{The $\mu=12$ case}\label{themu12case}
Next we consider the first non-trivial eigenvalue when $k=1$ so that $\mu=\mu_1=12$. In the above local coordinates description of $\C\mathbb{P}^2$ one can directly verify that the functions
\begin{gather*}
	F_1 = t  (\cos(x_3)\cos(x_1)-\sin(x_3)\sin(x_1)\cos(x_2)),\\
	F_2 = t  (\sin(x_3)\cos(x_1)+\cos(x_3)\sin(x_1)\cos(x_2)),\\
	F_3 = -t  \sin(x_1)\sin(x_2),
\end{gather*}
are all eigenfunctions on $\C\mathbb{P}^2$ with eigenvalue $12$ (there are of course more eigenfunctions but these were the only ones we were able to find explicitly in the above coordinates). Solving for $\kappa(r)$ we find the general solution:
\begin{align}
	\kappa(r) =\ c_1 &\frac{r^6-C}{r^4} + c_2 \frac{r^6-C}{r^4}  \big(2\sqrt{3} \arctan\Big(\frac{2 r^2}{\sqrt{3}\ C^{1/3}}+\frac{1}{\sqrt{3}}\Big)\label{k=1solution}\\ 
	&+ 3\log (r^2-C^{1/3})-\log(r^6-C)\big)+ 6 c_2 \!\ C^{1/3}.\nonumber
\end{align}
When $c_2=0$, the connection $\underline{A}$ determined by $\kappa(r)$ and $F_1, F_2, F_3$ by expression (\ref{Abarsolution}) (with $c_0$ chosen so that $A_0=0$) correspond to the dual $1$-forms associated to the Killing vector fields:
\begin{gather*}
	\frac{\sin(x_3)}{\sin(x_2)}\!\ \partial_{x_1}+\cos(x_3)\!\ \partial_{x_2}-\frac{\cos(x_2)\sin(x_3)}{\sin(x_2)}\!\ \partial_{x_3},\\
	\frac{\cos(x_3)}{\sin(x_2)}\!\ \partial_{x_1}-\sin(x_3)\!\ \partial_{x_2}-\frac{\cos(x_2)\cos(x_3)}{\sin(x_2)}\!\ \partial_{x_3},\\
	\partial_{x_3},
\end{gather*}
which generate \textit{right invariant} $S^1$ actions on the principal $S^3$ orbit on $\C\mathbb{P}^2.$ These Killing vector fields also preserve $\Om$ and hence $dX^\flat \in \mathfrak{su}(3)$, see Corollary \ref{killingvectorfieldcorollary}. In fact it is the knowledge of these Killing vector fields that led us to find explicitly the eigenfunctions $F_i$. Note that by contrast the \textit{left invariant} vector fields are not even Killing, with the exception of $\partial_{\sigma_1}$ which we already saw above.

\subsubsection{The general $\mu=\mu_k=4k(k+2)$ case.}\label{thegeneralmucase} %Using Mathematica: For the cases $\lambda_2=32$, $\lambda_5=140$, $\lambda_8=320$ we could not find any solution $\kappa(r)$ defined on $[C^{1/6},+\infty)$ but we were able find solutions for $\lambda_3 = 60$, $\lambda_4=96$, $\lambda_6=192$, $\lambda_7=252$ [my guess is that for $\lambda_{2+3n}$ there are no solutions $\kappa(r)$ defined on $[C^{1/6},+\infty)$].
%When $\mu=\mu_k=4k(k+2)$: 
We now want to find the general solution $\kappa$ for all the eigenvalues of $\C\mathbb{P}^2$.

Assuming $C\neq 0$ we define, with hindsight, a new variable $z=r^6/C$ and set $\kappa=C(1-z)f(z)$. Substituting in (\ref{fundamentalode}) we find that $f(z)$ satisfies:
\begin{equation}
	z(1-z)\frac{d^2f}{dz^2}+\Big(\frac{5}{3}-\Big(\frac{3-k}{3}+\frac{k+5}{3}+1\Big)z\Big)\frac{df}{dz}-\frac{(3-k)(k+5)}{3^2}f=0,\label{hypergeometricODE}
\end{equation}
which is the well-known hypergeometric differential equation. Observe that (\ref{hypergeometricODE}) has singular points at $z=0$ and $1$. However they are both regular singular points and hence one can always find local power series solutions in the neighbourhood of these points by the Frobenius method. Note that for general singular points of ODEs one cannot guarantee the existence of analytic solutions.
\begin{Rem}
In our case since we are considering a $U(1)$ gauge group (with an $S^1$ symmetry on $P^6$ and a suitable ansatz for $\psi$) we only have to deal with a single ODE, but for more general gauge groups (with additionally cohomogeneity symmetry on the base) one is usually led to a system of ODEs and then one has to appeal to Malgrange's method to get local existence of an analytic solution, see for instance \cite[Sect. 3.3, 4.3]{Lotay2016}. Given such a local solution one then need to show that it can be extended globally by means of ODE analysis. 
\end{Rem}
In our situation we are interested in solutions defined for $z\in [1,\infty)$ i.e. $r \in [C^{1/6},\infty)$. %The problem of convergence of the series  We are of course interested in the case when $z=1$
Consider instead the general solution to (\ref{hypergeometricODE}) in the neighbourhood of $r=0$ given by
\begin{align}
	\kappa(r) =\ &c_1  (r^6-C)\ _2F_1\Big(\frac{3-k}{3},\frac{k+5}{3};\frac{5}{3};\frac{r^6}{C}\Big)\ +\label{kappasolutionCP2}\\ 
	&c_2 \frac{ (r^6-C)}{r^4}\ _2F_1\Big(\frac{1-k}{3},\frac{k+3}{3};\frac{1}{3};\frac{r^6}{C}\Big),\nonumber
\end{align}
where $_2F_1$ denotes the hypergeometric function and $c_1,c_2$ are constants. 
We refer the reader to \cite[Chap. 15]{HandbookofMathematicalFunctions} for general properties of the hypergeometric function.
In general the hypergeometric function $_2F_1(a,b;c;z)$ has radius of convergence equal to $1$ unless either one of the first two arguments of $_2F_1$ is a non positive integer, say $a\in \mathbb{Z}^-$, in which case it reduces to a polynomial of degree equal to $-a$.
So if $k=3n$ or $3n+1$ then $\kappa(r)$ always has at least one solution defined for $r\in[C^{1/6},\infty)$; we already saw the $n=0$ case above (i.e. $k=0$ and $1$) and for instance if $n=1$ we have
\[\kappa(r)=r^6-C\]
and 
\[\kappa(r)=\frac{(r^6-C)(7r^6-C)}{r^4}\]
as explicit solutions corresponding to $k=3$ and $k=4$ respectively. Hence we obtain global existence of a solution when $k=3n$ and $3n+1$. %Having now shown the global existence of a solution when $k=3n$ and $3n+1$, we now need to  
% since one of the two hypergeometric functions above will reduce to a polynomial (this only happens when either one of the first two arguments of $_2F_1$ is a non positive integer). 
The general solution to (\ref{hypergeometricODE}) in the neighbourhood of $r=C^{1/6}$ i.e. $z=1$ is generated by 
$$_2F_1(\frac{3-k}{3},\frac{k+5}{3};2;1-z)=z^{-2/3}\   _2F_1(\frac{1-k}{3},\frac{k+3}{3};2;1-z)$$ 
and the second fundamental solution has a more complicated power series expression, see \cite[15.5.19]{HandbookofMathematicalFunctions}, which generally converges for $|1-z|<1$ unless $k=3n$ or $3n+1$ in which case one gets a solution involving $\arctan$ and $\log$ as we saw already in (\ref{k=1solution}) [we were unable to find an explicit expression in the general case]. Thus, if $k=3n+2$ then we do not expect to find any complete solutions.

%On the other hand if $k=3n+2$ then both hypergeometric functions will only converge to a \textit{real} function for $r\in(0,C^{1/6})$ and hence they are not well-defined on  $\mathcal{O}_{\C\mathbb{P}^2}(-3)$.
On the other hand if $C=0$ i.e. we are on $\C^3$ then solving (\ref{fundamentalode}) we instead get the general solution
\[\kappa(r)=c_1 r^{2k}+c_2 r^{-4-2k}.\]
Thus in this case there is always a solution for any $k\in \mathbb{Z}^+_0$. Of course, only the solution with $c_2=0$ will be well-defined on all of $\C^3$; when $c_2 \neq 0$ we get meromorphic connections with poles at the origin. Note that the case $\mu=c_2=0$ only gives a trivial solution as already seen above. So to summarise we have shown that:
\begin{Th}\label{maintheoremcanonicalbundle}
	For each positive integer $k$ there exist non-trivial $S^1$-invariant abelian $SU(3)$-instantons on $\C^3$ given by (\ref{Abarsolution}), where $F$ is an eigenfunction of $\C\mathbb{P}^2$ with eigenvalue $\mu_k=4k(k+2)$ and $\kappa=r^{2k}$.
	If $k$ is of the form $3n$ or $3n+1$ for $n\in \mathbb{Z}^+_0$, then there exist non-trivial  $S^1$-invariant  abelian $SU(3)$-instantons on $\mathcal{O}_{\C\mathbb{P}^2}(-3)$, where $\kappa$ is now given by (\ref{kappasolutionCP2}) for suitable constants $c_1,c_2$ (not both zero).  In particular, given a solution $\kappa$ for $\mu_k$, the moduli space of such instantons is a vector space of dimension equal to $m_{k}= 4(k+1)^3$.
\end{Th}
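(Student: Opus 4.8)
The plan is to assemble the reduction already carried out in this section. By Corollary~\ref{S1invariantInstantonconstantU}, specialised to the present geometry in Section~\ref{S1invariantInstantonAntiCanonical}, an $S^1$-invariant abelian $SU(3)$-instanton of the separable form $\underline{A}=\kappa(r)\,F\,\theta+A(r)$ is equivalent to the data of a function $F$ on $\C\mathbb{P}^2$ satisfying $d^*dF=\mu F$ together with a radial profile $\kappa$ solving the ODE (\ref{fundamentalode}), the connection $A$ being recovered by integrating (\ref{instantonequ1}) as in (\ref{Abarsolution}) (with the integration constant chosen so that $A_0=0$, which is possible for $\mu\neq 0$ by (\ref{A0condition})). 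Since $\mu$ must then be an eigenvalue of $\C\mathbb{P}^2$, it equals $\mu_k=4k(k+2)$, and the whole problem collapses to the single question: for which $k$ does (\ref{fundamentalode}) admit a $\kappa$ for which the connection (\ref{Abarsolution}) is \emph{globally} well-defined? I would treat the two geometries separately according to whether $C=0$ (on $\C^3$) or $C\neq 0$ (on $\mathcal{O}_{\C\mathbb{P}^2}(-3)$).

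For $\C^3$ the factor $r^6-C$ in (\ref{fundamentalode}) becomes $r^6$ and the equation reduces to the Euler equation $r^2\kappa''+5r\kappa'-\mu\kappa=0$. Substituting $\kappa=r^s$ gives the indicial equation $s^2+4s-\mu=0$; using $4+\mu_k=4(k+1)^2$ the roots are $s=2k$ and $s=-4-2k$, so $\kappa=c_1 r^{2k}+c_2 r^{-4-2k}$. The branch $\kappa=r^{2k}$ (i.e. $c_2=0$) is smooth across the origin and, after integrating (\ref{instantonequ1}), yields a smooth connection; it is nontrivial because the $\om_{\C\mathbb{P}^2}$-component $2\kappa F$ of the curvature (\ref{curvatureabelian}) does not vanish. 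This gives an instanton for every $k\geq 1$ and settles the first assertion.

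For $\mathcal{O}_{\C\mathbb{P}^2}(-3)$ the substitution $z=r^6/C$, $\kappa=C(1-z)f(z)$ turns (\ref{fundamentalode}) into the hypergeometric equation (\ref{hypergeometricODE}) with parameters $a=\tfrac{3-k}{3}$, $b=\tfrac{k+5}{3}$, $c=\tfrac53$, whose general solution near $r=0$ is (\ref{kappasolutionCP2}). The main obstacle, and the crux of the theorem, is global existence on $r\in[C^{1/6},\infty)$, i.e. $z\in[1,\infty)$: a generic hypergeometric series has radius of convergence $1$, and at the zero section $z=1$ the indicial exponents are $0$ and $c-a-b=-1$, differing by an integer, so the second Frobenius solution carries a $\log$ and is not smooth. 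The escape is truncation: ${}_2F_1(a,b;c;z)$ becomes a polynomial exactly when $a$ or $b$ lies in $\mathbb{Z}_{\le 0}$. Here $a=\tfrac{3-k}{3}\in\mathbb{Z}_{\le 0}$ precisely when $k=3n$ with $n\geq 1$, turning the first term of (\ref{kappasolutionCP2}) into $(r^6-C)$ times a polynomial in $r^6$; and the corresponding parameter $\tfrac{1-k}{3}$ of the second term lies in $\mathbb{Z}_{\le 0}$ precisely when $k=3n+1$. In either admissible case $\kappa$ is $(r^6-C)$ times a polynomial in $r^6$ (divided by $r^4$ in the second case), hence smooth on $[C^{1/6},\infty)$ and vanishing at the zero section; the $(r^6-C)$ factor simultaneously cancels the denominator inside the integral of (\ref{instantonequ1}), so (\ref{Abarsolution}) is a genuine smooth connection. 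When $k=3n+2$ neither parameter truncates, the singularity at $z=1$ persists, and no complete solution exists. I would sanity-check the samples $\kappa=r^6-C$ ($k=3$) and $\kappa=(r^6-C)(7r^6-C)/r^4$ ($k=4$) against this analysis.

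Finally, for the moduli space I would fix $k$ of admissible form and observe that the smooth radial profile $\kappa$ is unique up to scale, since for a given $k$ only one of the two terms in (\ref{kappasolutionCP2}) truncates and no combination involving the singular solution can be smooth at $z=1$. Each eigenfunction $F$ in the $\mu_k$-eigenspace then produces an instanton through (\ref{Abarsolution}), and the assignment $F\mapsto\underline{A}$ is linear; by the remark following (\ref{curvatureabelian}), distinct eigenfunctions give distinct curvature forms, which for a $U(1)$ bundle means distinct gauge classes. Hence this assignment identifies the moduli space with the eigenspace itself, whose dimension is the multiplicity $m_k=4(k+1)^3$, completing the proof. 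The one point genuinely demanding care is the smoothness and global-extension claim at $r=C^{1/6}$, which is exactly where the \emph{polynomial}, as opposed to merely convergent, nature of the truncated hypergeometric solution is indispensable.
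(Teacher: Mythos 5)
Your proposal is correct and follows essentially the same route as the paper: reduce via Corollary~\ref{S1invariantInstantonconstantU} to the separable ODE (\ref{fundamentalode}), solve the Euler equation for $C=0$ and the hypergeometric equation (\ref{hypergeometricODE}) for $C\neq 0$, and use the truncation of ${}_2F_1$ to a polynomial when $\tfrac{3-k}{3}$ or $\tfrac{1-k}{3}$ is a non-positive integer (i.e. $k=3n$ or $3n+1$) to get global solutions on $[C^{1/6},\infty)$, with the moduli space identified with the $\mu_k$-eigenspace of multiplicity $m_k=4(k+1)^3$. Your added remarks on the indicial exponents at $z=1$ and the uniqueness of the smooth radial profile are consistent with, and slightly sharpen, the paper's discussion.
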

The result of this section more generally implies that the moduli space of abelian $S^1$ invariant $SU(3)$-instantons on the line bundle $P^6\to M^4$ endowed with the Calabi ansatz metric is modelled on the space of eigenfunctions of the Laplacian of $(M^4,g_M)$, provided there exists a solution $\kappa$ to (\ref{fundamentalode}) for $r\in[C^{1/6},\infty)$.
Note that the eigenvalue $\mu$ determines the solution $\kappa$ and hence the asymptotic growth rate of $F_{\underline{A}}$. From the discussion following (\ref{kappasolutionCP2}) we see that one can have polynomial growth rate $r^{l}$ for arbitrarily large integer $l$ and that (\ref{firstone}) is the only case when we have an instanton of finite Yang-Mills energy. 
%somehow plays the role of the degree.... since instantons determine complex structures, what does this say about the moduli space of complex structures.
%the fact that $\mu$ are discrete suggest moduli space of instantons. it might be reasonable to compare with the moduli space for asd instantons on S4. are there geometric meaning to those?? despite the moduli space of abelian instantons being infinite dimensional (since we can take linear combinations), this is somewhat saying that one can choose a basis using the eigenfunctions of M 
%So we can already distinguish between the two Calabi-Yau structures by ``counting'' ($S^1$-invariant) instantons?

We conclude this sub-section with the description of the Levi-Civita connection as an $S^1$ invariant \textit{non-abelian} $SU(3)$-instanton.
\subsubsection{The Levi-Civita connection as an $SU(3)$ instanton}
%\subsection{Some remarks on non-abelian gauge groups}
Finding non abelian instantons is a rather challenging problem and unfortunately we have not been able to find any new examples.There is always however one canonical $SU(3)$-instanton with gauge group $SU(3)$ itself, namely the Levi-Civita connection.
%It is well-known that the Levi-Civita connection of a Calabi-Yau manifold is itself an $SU(3)$ instanton. 
With respect to a suitable orthonormal framing, one can express the Levi-Civita connection of $g_K$ locally as $\underline{A}=\psi\theta+A$ where
\[\psi=-\frac{iC}{r^6}\begin{pmatrix}
	{2} & 0 & 0 \\
	\cdot  & -1 & 0 \\
	\cdot  & \cdot & -1 
\end{pmatrix}\!\]
and 
\[A=\begin{pmatrix}
	-i t\sigma_1 & -\frac{\sqrt{t(r^6-C)}}{r^3}\sigma_2+i\frac{\sqrt{t(r^6-C)}}{r^3}\sigma_3 & -\frac{\sqrt{t(1-t)(r^6-C)}}{r^3}\sigma_1+i\frac{\sqrt{(r^6-C)}}{2r^3\sqrt{t(1-t)}}dt \\
	\cdot  & i\sigma_1 & \sqrt{1-t}\sigma_3-i\sqrt{1-t}\sigma_2 \\
	\cdot  & \cdot & i(t-1)\sigma_1 
\end{pmatrix}\!.\]
We see from this that $\psi$ is of a rather simple form and is only a function of $r$ i.e. $d_M \psi=0$; this might provide a useful ansatz to search for other examples in future work. Note that when $C=0$, $\underline{A}$ reduces to the flat connection. Note also that the Levi-Civita connection of $\C \mathbb{P}^2$ is itself a $U(2)\cong U(1){SU}(2)$ Hermitian Yang-Mills connection on $\C \mathbb{P}^2$ given locally by 
\[\begin{pmatrix}
	i(2t-1)\sigma_1 & -\sqrt{1-t}\sigma_3-i\sqrt{1-t}\sigma_2\\
	\cdot  & i(t+1)\sigma_1
\end{pmatrix}\!.\]
The $\mathfrak{su}(2)$-component naturally pulls back to give an instanton on $\mathcal{O}_{\C\mathbb{P}^2}(-3)$ under the inclusion $\Lm^{2}_-(\C\mathbb{P}^2)\cong \mathfrak{su}(2)\subset \mathfrak{su}(3)\cong \Lm^{1,1}_0(\mathcal{O}_{\C\mathbb{P}^2}(-3))$. Note however that the Levi-Civita connection of $\C\mathbb{P}^2$ does {not} pullback to a Hermitian Yang-Mills connection on $\mathcal{O}_{\C\mathbb{P}^2}(-3).$

Next, equipped with the explicit expression for $\Om$ given by Proposition \ref{holomorphicvolumeformCP2}, we initiate the search for special Lagrangian submanifolds in $\mathcal{O}_{\C\mathbb{P}^2}(-3)$.
\subsection{Special Lagrangians in $\mathcal{O}_{\C\mathbb{P}^2}(-3)$. }
%We now proceed to studying calibrated submanifolds in $\mathcal{O}_{\C\mathbb{P}^2}(-3)$. First we note that the zero section $\C \mathbb{P}^2$ and the fibres are complex submanifolds since they are calibrated by $\frac{1}{2}\om_{K}^2$ and $\om_{K}$ respectively. Other obvious examples include the singular $S^2=\C\mathbb{P}^1$ orbit in $\C\mathbb{P}^2$ and $\mathcal{O}_{\C\mathbb{P}^2}(-3)\big|_{S^2}$; this of course holds for more generally for any complex submanifold in $\C\mathbb{P}^2$. 

We begin with the observation that the distribution $S\subset T(\mathcal{O}_{\C\mathbb{P}^2}(-3))$ defined by the triple 
$$S:=\langle \partial_r, \partial_t, \cos(3y) \partial_{\sigma_2}- \sin(3y) \partial_{\sigma_3}\rangle$$ 
is involutive (i.e. it is closed under the Lie bracket) and hence this defines locally a foliation of $\mathcal{O}_{\C \mathbb{P}^2}(-3)$. It is easy to see from (\ref{omCY}) that this is in fact a Lagrangian foliation i.e. $\om_{K}\big|_S=0$. From Proposition \ref{holomorphicvolumeformCP2} we compute 
\begin{align*}
	\Om^-(\partial_r, \partial_t, \cos(3y) \partial_{\sigma_2}- \sin(3y) \partial_{\sigma_3}) = &\sin(3y)(\frac{r^5}{2(r^6-C)^{1/2}(1-t)^{1/2}})(\cos(3y))-\\
	&\cos(3y)(\frac{r^5}{2(r^6-C)^{1/2}(1-t)^{1/2}})(\sin(3y))\\
	=&\ 0
\end{align*}
and hence it follows that $\Om^+$ calibrates $S$. Thus, $S$ defines a special Lagrangian foliation. 
To understand the topology of this foliation we first rewrite the Calabi-Yau metric (\ref{calabiyaumetric}) as
\begin{align*}
	g_{K} =&\ \Big(\frac{r^6}{r^6-C}\Big) dr^2 + \Big(\frac{ r^6 -C}{r^4}\Big) \theta^2 + r^2 \Big(ds^2+\sin^2(s)\cos^2(s) \sigma_1^2\\ 
	&+\cos^2(s) \big( (\sin(3y)\sigma_2+\cos(3y)\sigma_3 )^2 +(\cos(3y)\sigma_2-\sin(3y)\sigma_3 )^2\big)\Big)
\end{align*}
and from this we immediately see that
\[g_{K}\Big|_S=\Big(\frac{r^6}{r^6-C}\Big) dr^2 + r^2 \big(ds^2+\cos^2(s) ( \cos(3y)\sigma_2-\sin(3y)\sigma_3 )^2\big).\]
If we fix the value of $r$ then one recognises easily that $g_{K}\Big|_S$ restricts to the round metric on this hypersurface, but since $s \in [0, \pi/2]$ this only corresponds to a hemisphere (contained in $\C\mathbb{P}^2$). Thus, if $C\neq 0$ then each special Lagrangian corresponds to a real half-line bundle over this hemisphere and the induced metric is asymptotically Euclidean.
On the other hand if $C=0$ (so that the total space is now $\R^6$) then each special Lagrangian corresponds to a cone on this hemisphere i.e it is isometric to $\R^+_0\times \R^2$. %Note that when $C \neq 0$ the induced metric on the special Lagrangian is not flat but only asymptotically flat: $\mathrm{Sec}(g_{K}\big|_S)=O(r^{-8})$.

As described in Section \ref{calabiyauonC3section} recall that here we are viewing $\C \mathbb{P}^2$ as a cohomogeneity one manifold with principal orbit $SU(2)=S^3$ and the two singular orbits are a point (at $t=0$) and an $S^2$ (at $t=1$ where the $1$-form $\sigma_1$ collapses i.e. the associated circle fibres shrink to a point). For each fixed $y$, consider the $S^1$ fibres in the principal orbit $S^3$ defined instead by the vector field $\cos(3y) \partial_{\sigma_2}- \sin(3y) \partial_{\sigma_3} \in S$ (note that unlike $\partial_{\sigma_1}$ this is not a Killing vector field on $\C\mathbb{P}^2$). As $t \to 0$ these $S^1$ fibres all shrink down to the same point and on the other end as $t \to 1$ the $S^1$ fibres in the principal $S^3$ orbit descend to geodesics in the singular $S^2$ orbit so that any two $S^1$ fibres now intersect at exactly $2$ points in the $S^2$. So strictly speaking $S$ does not determine a foliation globally as the leaves all intersect. However once the two singular orbits are removed (which corresponds to removing the complex submanifolds $\mathcal{O}_{\C\mathbb{P}^2}(-3)\big|_{S^2}\cong \mathcal{O}_{\C\mathbb{P}^1}(-3)$ and $\mathcal{O}_{\C\mathbb{P}^2}(-3)\big|_{pt}\cong \C$ from $\mathcal{O}_{\C\mathbb{P}^2}(-3)$) then we indeed get a special Lagrangian foliation of the complement. This foliation is locally parametrised by $S^1 \times S^2$ (the $S^1$ coming from the $y$-coordinate and the $S^2\simeq S^3/S^1$ is parametrising the $S^1$ fibres of $\cos(3y) \partial_{\sigma_2}- \sin(3y) \partial_{\sigma_3}$ in principal $S^3$ orbit).
Thus, to summarise the above discussion we have shown:
\begin{Th}\label{speciallagfoliation}
The complement of the two complex submanifolds $\mathcal{O}_{\C\mathbb{P}^2}(-3)\big|_{S^2}$ and $\mathcal{O}_{\C\mathbb{P}^2}(-3)\big|_{pt}$
in the canonical bundle $\mathcal{O}_{\C\mathbb{P}^2}(-3)$ admits a foliation by special Lagrangians whose leaves topologically correspond to a real half-line bundle over a hemisphere. In the limiting case when $C \to 0$ so that $g_{K}\to g_{\C^3}$, the leaves converge to $\R^+_0 \times \R^2$ endowed with its flat metric to give a special Lagrangian foliation of $\C^3$.
\end{Th}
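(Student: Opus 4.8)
The plan is to show that the explicitly given distribution $S=\langle \partial_r,\partial_t, V\rangle$, where $V:=\cos(3y)\partial_{\sigma_2}-\sin(3y)\partial_{\sigma_3}$, is integrable with special Lagrangian leaves, and then to analyse how this local picture fails to globalise at the two singular orbits. First I would verify involutivity by the Frobenius theorem. Since $\partial_r$ and $\partial_t$ commute, and since the coefficients $\cos(3y),\sin(3y)$ as well as the left-invariant fields $\partial_{\sigma_2},\partial_{\sigma_3}$ are all independent of $r$ and $t$, the three pairwise brackets $[\partial_r,\partial_t]$, $[\partial_r,V]$, $[\partial_t,V]$ all vanish; moreover none of the generators has a $\partial_y$-component, so $y$ is constant along each leaf and $V$ restricts to a constant-coefficient combination of $\partial_{\sigma_2},\partial_{\sigma_3}$ there. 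Hence $S$ is involutive and defines a local foliation.

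Next I would check the two pointwise conditions in the definition of a special Lagrangian. For the Lagrangian condition I would evaluate $\om_K$ from (\ref{omCY}) on each pair of generators: using $\sigma_1(V)=0$, $\theta(V)=-t\sigma_1(V)=0$, and the fact that $\sigma_2,\sigma_3$ annihilate $\partial_r,\partial_t$, every pairing vanishes, so $\om_K\big|_S=0$. For the calibration condition I would compute $\Om^-(\partial_r,\partial_t,V)$ using the explicit holomorphic volume form of Proposition \ref{holomorphicvolumeformCP2}; the only two surviving contributions, one from $\sin(3y)\psi^+$ and one from $\cos(3y)\psi^-$, carry opposite signs with equal $\sin(3y)\cos(3y)$ factors and cancel, giving $\Om^-\big|_S=0$. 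Together these show that $\Om^+$ calibrates the leaves, so they are special Lagrangian.

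To identify the leaf topology I would restrict $g_K$ from (\ref{calabiyaumetric}). Writing $t=\cos^2(s)$ and dropping the directions transverse to $S$, the induced metric becomes
\[
g_K\big|_S=\Big(\frac{r^6}{r^6-C}\Big)dr^2+r^2\big(ds^2+\cos^2(s)(\cos(3y)\sigma_2-\sin(3y)\sigma_3)^2\big),
\]
which at fixed $r$ is the round metric on a hemisphere ($s\in[0,\pi/2]$). Thus each leaf is a real half-line bundle ($r\in[C^{1/6},\infty)$) over a hemisphere, with asymptotically Euclidean induced metric.

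The delicate part, which I expect to be the main obstacle, is the passage from the local foliation to a global one and the collapsing limit. I would show that $S$ fails to be a genuine foliation precisely along the two singular orbits of the cohomogeneity-one $SU(2)$-action: at $t=0$ the $V$-circles all shrink to the point orbit, and at $t=1$ they descend to geodesics in the singular $S^2$ which mutually intersect. Identifying these degeneration loci with the complex submanifolds $\mathcal{O}_{\C\mathbb{P}^2}(-3)\big|_{pt}\cong\C$ and $\mathcal{O}_{\C\mathbb{P}^2}(-3)\big|_{S^2}\cong\mathcal{O}_{\C\mathbb{P}^1}(-3)$, I would argue that on their complement the leaves are pairwise disjoint and together sweep out the whole complement, yielding a bona fide special Lagrangian foliation parametrised by $S^1\times S^2$ (the $y$-circle together with $S^3/S^1_V$). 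Finally, for the limit $C\to 0$ I would note that $g_K\to g_{\C^3}$ and $r^6/(r^6-C)\to 1$ with $r\in[0,\infty)$, so the leaf metric degenerates to the flat cone $dr^2+r^2(\cdots)$ over the hemisphere, namely $\R^+_0\times\R^2$, giving the claimed special Lagrangian foliation of $\C^3$.
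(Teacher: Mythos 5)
Your proposal is correct and follows essentially the same route as the paper: the same distribution $S=\langle \partial_r,\partial_t,\cos(3y)\partial_{\sigma_2}-\sin(3y)\partial_{\sigma_3}\rangle$, the same verification that $\om_K\big|_S=0$ and $\Om^-\big|_S=0$ via Proposition \ref{holomorphicvolumeformCP2}, the same restriction of $g_K$ to identify the leaves as half-line bundles over a hemisphere, and the same analysis of the degeneration at the two singular orbits and of the $C\to 0$ limit. The only difference is that you make the involutivity check explicit via vanishing brackets, which the paper leaves as an assertion.
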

\begin{Rem}
Strictly speaking as $C\to 0$, $g_{K}\to g_{\C^3/\mathbb{Z}_3}$ since $y\in[0,2\pi/3)$ but we can pullback to the universal cover $\C^3$. 
Special Lagrangians on $\mathcal{O}_{\C\mathbb{P}^2}(-3)$ were also constructed by Goldstein in \cite{Goldstein2001} but by a different approaching using the moment map of a $\mathbb{T}^2$ action preserving the Calabi-Yau structure. Since this is not the case in our situation, our examples appear to be different to those found in \cite{Goldstein2001}.
%In \cite[III.3.A]{HarveyLawson} Harvey and Lawson construct flat special Lagrangian cones in $\C^3$ (in fact more generally in $\C^n$), it seems likely that the special Lagrangians when $C=0$ correspond to their examples.
\end{Rem}
Roughly, the SYZ version of mirror symmetry \cite{SYZ1996} asserts that, in certain suitable limits, mirror pairs of compact Calabi-Yau $3$-fold admit fibrations by approximately flat special Lagrangians $\mathbb{T}^3$ together with flat $U(1)$ connections, see \cite[Sect. 9.4]{Joycebook}. Although our examples are not strictly of these kinds they do share several similar features and as such they
might provide explicit local models. %, just like for instance the Gibbons-Hawking ansatz in the work Gross-Wilson in \cite{GrossWilson2000}. 
It is worth pointing out that the finite energy $U(1)$ connection form $\underline{A}$ given by (\ref{firstone}) restricts on a flat connection on the above special Lagrangians i.e. $F_{\underline{A}}\big|_S=0$.

\section{Abelian instantons on the canonical bundle of $S^2 \times S^2$}\label{S2S2instantonsection}
In this section we consider the analogous problem as in the last section but now for the canonical bundle of $S^2 \times S^2$, which we denote by $K_{1,-1}$.  Our goal here is to illustrate the similarities and differences between the two situations.
\subsection{The Calabi-Yau metric on $K_{1,-1}$}
To emphasise the similarity with the construction of the Calabi-Yau metric on the canonical bundle of $\C\mathbb{P}^2$, we begin by considering  
%First the Fubini-Study K\"ahler structure on $S^2=\C\mathbb{P}^1$ can be expressed as
%\begin{gather*}
%\om_{S^2}= \frac{r}{(1+r^2)^2} dr \w d\theta,\\
%g_{S^2}=\frac{1}{(1+r^2)^2}(dr^2 + r^2 d\theta^2),
%\end{gather*}
% with local coordinates $r$ and $\theta$. 
a product K\"ahler structure on $S^2_1 \times S^2_2=\C \mathbb{P}^1_1 \times \overline{\C \mathbb{P}^1_2}$ given by the pair
\begin{gather*}
\om_{S^2\times S^2}:=\om_{S^2_1}-\om_{S^2_2}= \frac{r_1}{(1+r^2_1)^2} dr_1 \w d\theta_1-\frac{r_2}{(1+r_2^2)^2} dr_2 \w d\theta_2,\\
g_{S^2 \times S^2}:=g_{S^2_1}+g_{S^2_2}=\frac{1}{(1+r_1^2)^2}(dr_1^2 + r_1^2 d\theta_1^2)+\frac{1}{(1+r_2^2)^2}(dr_2^2 + r_2^2 d\theta_2^2).
\end{gather*}
Note that here we considering the opposite orientation to the usual Fubini-Study one on $S^2_2$. In the above convention we have that $\Ric(g_{S^2 \times S^2})=4  g_{S^2 \times S^2}$.
On the $S^1$ bundle $Q^5\to S^2_1 \times S^2_2$ determined by the cohomology class 
$$[\om_{S^2\times S^2}]=[\om_{S^2_1}]-[\om_{S^2_2}]\in H^2(S^2_1 \times S^2_2,\mathbb{Z}) \cong H^2(S^2_1,\mathbb{Z}) \oplus H^2(S^2_2,\mathbb{Z}),$$ 
we define a connection $1$-form by
\begin{equation}
\eta:=dy-\frac{1}{2(1+r_1^2)}d\theta_1+\frac{1}{2(1+r_2^2)}d\theta_2,
\end{equation}
where $y$ again denotes the fibre coordinate, so that indeed $d\eta=\om_{S^2\times S^2}$.  Then the conical Calabi-Yau metric on $\R^+ \times Q^5$ is explicitly given by
\begin{gather*}
g_{c}:=dr^2 +r^2(\frac{16}{9}\eta^2+\frac{2}{3}g_{S^2 \times S^2}),\\
\om_{c}:= \frac{4}{3}r dr \w \eta + \frac{2}{3} r^2 \om_{S^2\times S^2}.
\end{gather*}
%The reader will find a description of this K\"ahler structure from a homogeneous point of view in \cite[Example 1.]{Lotay2016}.
Analogously to the previous case we can remove the conical singularity by adding an $S^2 \times S^2$ at the apex; we denote this space by $K_{1,-1}$ endowed with the complete Calabi-Yau metric
\begin{gather*}
	g_{K_{1,-1}}:=\Big(\frac{r^6}{r^6-C}\Big)dr^2 +\frac{16}{9}\Big(\frac{r^6-C}{r^4}\Big)\eta^2+\frac{2}{3}r^2g_{S^2 \times S^2},
\end{gather*}
and as before the K\"ahler form is unchanged i.e. $\om_{K_{1,-1}}=\om_{c}$. We can define an obvious constant length $(3,0)$-form by
\[\psi^++i\psi^-= \frac{2r^2}{3(r_1^2+1)(r_2^2+1)}\Big(\frac{r^3 dr}{(r^6-C)^{1/2}}+i \frac{4(r^6-C)^{1/2}}{3r^2}\eta \Big)\w dz_1\w d\bar{z}_2.\]
where $z_j:=x_j+iy_j$, $x_j:=r_j \cos (\theta_j)$ and $y_j:=r_j \sin (\theta_j)$. Note again that the conjugate sign on $z_2$ reflects the fact that we are considering the negative of the standard Fubini-Study complex structure on $S^2_2$. One can then show that:
\begin{Prop}\label{holomorphicvolumeformS2S2}
	The pair
	\begin{gather}
		\Om^+:=+\sin(4y+2\theta_1-2\theta_2)\psi^+ + \cos(4y+2\theta_1-2\theta_2)\psi^-,\\
		\Om^-:=-\cos(4y+2\theta_1-2\theta_2)\psi^+ + \sin(4y+2\theta_1-2\theta_2) \psi^-,
	\end{gather}
	defines a holomorphic volume form i.e. $d\Om^\pm=0$ and satisfies (\ref{compatibilitysu3}) and (\ref{normalisationsu3}).
\end{Prop}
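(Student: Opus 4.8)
The plan is to follow the same two-step strategy used in the proof of Proposition \ref{holomorphicvolumeformCP2}: first dispose of the algebraic conditions, then reduce everything to a single closedness identity. Set $\Om_0:=\psi^++i\psi^-$ and $\phi:=4y+2\theta_1-2\theta_2$. A direct rearrangement of the formulas in the statement gives
\[
\Om^++i\Om^-=-ie^{i\phi}\,\Om_0,
\]
so $(\Om^+,\Om^-)$ is obtained from $(\psi^+,\psi^-)$ by a pointwise rotation, equivalently by multiplying the complex volume form by the unit-modulus scalar $-ie^{i\phi}$. Since $\psi^\pm$ are the real and imaginary parts of the constant-length $(3,0)$-form compatible with $\om_{K_{1,-1}}$, they already satisfy (\ref{compatibilitysu3}) and (\ref{normalisationsu3}). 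For (\ref{compatibilitysu3}) note that $\om\w\Om_0$ is a $(4,1)$-form, which vanishes on a complex $3$-fold, so $\om\w\psi^\pm=0$ and hence, by linearity, $\om\w\Om^\pm=0$. For (\ref{normalisationsu3}) one checks that the rotation preserves the wedge, $\Om^+\w\Om^-=\psi^+\w\psi^-=\tfrac23\om^3$, using $\psi^\pm\w\psi^\pm=0$ and $\psi^-\w\psi^+=-\psi^+\w\psi^-$. Thus the only remaining content is $d\Om^\pm=0$.

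Because the scalar factor $-ie^{i\phi}$ has constant modulus, closedness is the sole issue. Differentiating $\Om=-ie^{i\phi}\Om_0$ gives $d\Om=-ie^{i\phi}(i\,d\phi\w\Om_0+d\Om_0)$, so $d\Om=0$ is equivalent to $d\Om_0=-i\,d\phi\w\Om_0$. Separating real and imaginary parts, and recalling $\Om_0=\psi^++i\psi^-$, this reduces to the identities
\[
d\psi^\pm=\pm\,d\phi\w\psi^\mp,\qquad d\phi=4\,dy+2\,d\theta_1-2\,d\theta_2,
\]
which are the exact analog of $d\psi^\pm=\pm3\,dy\w\psi^\mp$ appearing in the proof of Proposition \ref{holomorphicvolumeformCP2}. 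Granting these, closedness follows immediately: indeed $d\psi^++i\,d\psi^-=d\phi\w(\psi^--i\psi^+)=-i\,d\phi\w\Om_0$, which is precisely the required relation (and since $d\phi\w d\phi=0$ the computation is consistent with $d^2=0$). Equivalently, one may substitute directly into $d\Om^\pm$ and obtain $0$ by the same two-line cancellation as in the $\C\mathbb{P}^2$ case.

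The computational heart of the argument, and the step I expect to be the main obstacle, is establishing $d\psi^\pm=\pm\,d\phi\w\psi^\mp$ directly from the explicit coordinate expression for $\psi^++i\psi^-$. One differentiates that expression, using $d\eta=\om_{S^2\times S^2}=\om_{S^2_1}-\om_{S^2_2}$, the fact that $dz_1$ and $d\bar z_2$ are closed of pure type, and the definition of $\eta$. The genuine subtlety, absent in the $\C\mathbb{P}^2$ case where the phase was simply $3y$, is that the connection $\eta$ couples the fibre coordinate $y$ to \emph{both} base angles $\theta_1$ and $\theta_2$. Consequently the $d\theta_1$- and $d\theta_2$-contributions arising from differentiating the prefactor $\tfrac{2r^2}{3(1+r_1^2)(1+r_2^2)}$, from $d\eta$, and from the $\eta$-terms themselves must all be collected and shown to assemble into exactly $2\,d\theta_1-2\,d\theta_2$, which together with the $4\,dy$ coming from the fibre reproduces $d\phi$. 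Carefully tracking these angular terms is the only delicate part of the verification; once $\psi^\pm$ is written out in full the rest is routine.
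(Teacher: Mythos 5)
Your reduction is clean and follows exactly the strategy the paper itself uses in the proof of Proposition \ref{holomorphicvolumeformCP2}: writing $\Om^++i\Om^-=-ie^{i\phi}(\psi^++i\psi^-)$, the algebraic conditions (\ref{compatibilitysu3}) and (\ref{normalisationsu3}) are inherited from $\psi^\pm$ because the rotation has unit modulus, and closedness becomes equivalent to $d\psi^\pm=\pm\,d\phi\w\psi^\mp$. All of that is correct. The problem is that you stop at, and ``grant,'' precisely the step that carries the entire content of the proposition, and that step is not a formality: carried out with the paper's explicit formulas it does not close with the phase as you (and the statement) have written it.

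Concretely, write $\psi^++i\psi^-=h\,\alpha\w\beta$ with $h=\frac{2r^2}{3(1+r_1^2)(1+r_2^2)}$, $\alpha$ the bracketed $(1,0)$-form in the $(r,\eta)$-directions and $\beta=dz_1\w d\bar z_2$. Then $d(h\alpha\w\beta)=dh\w\alpha\w\beta+h\,d\alpha\w\beta$, the $d\eta$-term of $d\alpha$ dies against $\beta$, and modulo $\beta$ one may substitute $d\theta_1\equiv\frac{i}{r_1}dr_1$ and $d\theta_2\equiv-\frac{i}{r_2}dr_2$. Comparing $dr_1$-components of $d\Om_0$ and $-i\,d\phi\w\Om_0$: the term $dh/h$ contributes $-\frac{2r_1}{1+r_1^2}$, whereas with $d\phi=4\,dy+2\,d\theta_1-2\,d\theta_2=4\eta+\bigl(2+\frac{2}{1+r_1^2}\bigr)d\theta_1-\bigl(2+\frac{2}{1+r_2^2}\bigr)d\theta_2$ the right-hand side contributes $+\frac{4+2r_1^2}{r_1(1+r_1^2)}$; these differ by $4/r_1$. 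The components match only for the phase $4y-2\theta_1+2\theta_2$, i.e.\ the identity that actually holds is $d\psi^\pm=\pm(4\,dy-2\,d\theta_1+2\,d\theta_2)\w\psi^\mp$ (the $dr\w\eta$-components then also balance, using $d\bigl(\tfrac{(r^6-C)^{1/2}}{r^2}\bigr)=\tfrac{r^6+2C}{r^3(r^6-C)^{1/2}}\,dr$). So either the printed phase carries a sign error on the $\theta_j$ or some compensating convention is in force; in either case the identity you defer is exactly where the verification can fail, and a complete proof must perform this computation rather than postulate it.
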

\begin{Rem}
	In contrast to Proposition \ref{holomorphicvolumeformCP2} observe that now the rotating function depends on both $M^4=S^2\times S^2$ and the fibre coordinate $y$. This makes calculations a bit more involved but it is nonetheless straightforward to verify that $d\Om^{\pm}=0.$ 
\end{Rem}
Unfortunately in this case we have not been able to find any special Lagrangian fibration. There are however several examples of special Lagrangians: 

\textit{Examples of special Lagrangians.}
Consider the foliation defined by the distribution $\langle \partial_{r_1},\partial_{r_2},\partial_{r} \rangle$. This foliation is of course (locally) parametrised by $(y,\theta_1,\theta_2)$ and it is easy to see that this indeed defines a Lagrangian foliation.
One can verify that for $\theta_1=\theta_2$ and $y=\frac{\pi}{8}$ and for $\theta_1=\theta_2+\frac{\pi}{2}$ and $y=0$ the corresponding leaves are in fact special Lagrangians; this gives two $1$-parameter families. Again we observe that as $C\to 0 $ the special Lagrangians converge to flat cones but now over $S^1 \times S^1 \subset S^2 \times S^2$. %Following the results in \cite{McLean1998} one might be able to deform these examples to construct even more examples.

\subsection{Instantons: Abelian examples}
The computations of sub-section \ref{S1invariantInstantonAntiCanonical}
carry over immediately to the present situation by replacing $\om_{FS}$ by $\frac{2}{3}\om_{S^2\times S^2}$ and $\theta$ by $\frac{4}{3}\eta$. So we are again required to solve (\ref{fundamentalode}) but now instead of $(\C\mathbb{P}^2,g_{FS})$ we have $(S^2 \times S^2, \frac{2}{3}g_{S^2\times S^2})$.
From \cite[Chap. 3 Corollary C. I. 3]{Berger1971spectre} we easily deduce that the eigenvalues of the latter are given by 
\begin{equation}
\mu_k=6 k(k+1)
\end{equation} 
for $k\in\mathbb{Z}^+_0$ and $\mu_k$ occur with multiplicity 
\begin{equation}
m_k=2(2k+1).
\end{equation}
Here the eigenfunctions are simply linear combinations of spherical harmonics, with same eigenvalue, on each $S^2$ factor.
Solving for $\kappa(r)$ as in the previous section we find that 
\begin{align*}
	\kappa(r) =\ &c_1  (r^6-C)\ _2F_1\Big(\frac{8+\sqrt{6k^2+6k+4}}{6},\frac{8-\sqrt{6k^2+6k+4}}{6};\frac{5}{3};\frac{r^6}{C}\Big)\ +\\ 
	&c_2 \frac{ (r^6-C)}{r^4}\ _2F_1\Big(\frac{4+\sqrt{6k^2+6k+4}}{6},\frac{4-\sqrt{6k^2+6k+4}}{6};\frac{1}{3};\frac{r^6}{C}\Big).
\end{align*}
As before we know that we get a well-defined solution for $r\in[C^{1/6},+\infty)$ if either of the first two entry of $_2F_1$ is a non-positive integer; here are a few values of $k$ we found using $\mathsf{Python}$ for which this occurs: $k=0,15,64,1519,6320,148895,619344$ and $k=1,6,153,638,15041,62566$.

The case when $k=0$ is the same for all $M^4$, so we always have the instanton (\ref{firstone}) with finite Yang-Mills energy essentially corresponding to the volume of the zero section $M^4\hookrightarrow P^6$.
When $k=1$ i.e. $\mu=12$ we get the same solution $\kappa$ as before but the multiplicities of the eigenvalues differ: $m_1(S^2\times S^2)=6$ whereas $m_1(\C\mathbb{P}^2)=32$.

When $k=6$ i.e. $\mu=252$ we get
\[\kappa(r)=\frac{(r^6-C)(65 r^{12} + 40 C r^6 + 2C^2)}{r^4}.\]
Now $m_6(S^2\times S^2)=26$ whereas $m_7(\C\mathbb{P}^2)=2048$ ($\mu_7(\C\mathbb{P}^2)=252$).
When $k=15$ i.e. $\mu=1440$ we get
\[\kappa(r)=(r^6 + C)(C^5 + 23C^4r^6 + \frac{299}{2}C^3r^{12} + \frac{8671}{22}C^2r^{18} + \frac{34684}{77}Cr^{24} + \frac{34684}{187}r^{30}).\]
Now $m_{15}(S^2\times S^2)=62$ whereas $m_{18}(\C\mathbb{P}^2)=27436$ ($\mu_{18}(\C\mathbb{P}^2)=1440$).
\iffalse 
\section{The holomorphic volume form of the Eguchi-Hanson space}
Writing the round metric on $\C\mathbb{P}^1$ as
\[g_{S^2}=\frac{1}{4}(\sin(t)^2 ds^2+dt^2)\]
we can then express the Eguchi-Hanson metric on $\mathcal{O}_{\C\mathbb{P}^1}(-2)$ as
\[g_{EH}=(1-r^{-4})^{-1}dr^2+r^2(1-r^{-4})\theta^2+r^2 g_{S^2},\]
where $\theta:=dy+\frac{1}{2}\cos(t)ds$. We easily see that as $r\to + \infty$ the metric is ALE.
One can define an obvious constant length $(2,0)$ form by
\[\om_2+i\om_3:=\frac{r}{2}((1-r^{-4})^{-1/2}dr+i r(1-r^{-4})^{1/2}\theta)\w (\sin(t)ds+idt).\]
The latter is however not holomorphic, but we instead have that:
\begin{Prop}
	The constant length $(2,0)$ form defined by 
	\[\Om:=e^{2iy}(\om_2+i\om_3)\]
	is holomorphic i.e. $d\Om=0$ (and hence $\nabla\Om=0$).
\end{Prop}

Laplacian (at least on functions) is independent of orientation and only depends on the metric, so the laplacian of $S^2 \times S^2$ does not depend on the fact that we are switching the orientation on the second $S^2$.
\fi

A general conclusion of the above is that there are considerably fewer abelian instantons on $K_{1,-1}$ than on $\mathcal{O}_{\C\mathbb{P}^2}(-3)$. 

\section{Abelian instantons on other K\"ahler Einstein $3$-folds}\label{T4section}
Having studied in detail abelian instantons on  Calabi-Yau $3$-folds arising from the Calabi ansatz in case (\ref{calabiansatzgenral}), we shall now consider instantons on the K\"ahler Einstein manifolds arising from cases (\ref{CP3asS1invariant})-(\ref{EinsteinonHK}) and also on the half-complete examples arising from (\ref{uhalfcomplete}). The goal here is to investigate how the set of abelian instantons on $P^6$ depends on the K\"ahler Einstein manifold $M^4$.

\subsubsection{Case \ref{CP3asS1invariant}}\label{instantononCP3}
In this case we have that $M^4$ is a positive K\"ahler Einstein $4$-manifold and $u$ is given by
\begin{equation}
	u=\frac{1}{2H^{1/2}(1-H)^{1/2}},
\end{equation}
with $H\in [0,1].$ Recall that, aside from the case when $M^4=\C\mathbb{P}^2$, $P^6$ always has a conical singularity at $H=0$.
Solving (\ref{separableabeliancase}) for $\kappa$ we find that
\begin{align*}
	\kappa =\ &c_1 H^{-1-\frac{\sqrt{4+\mu}}{2}}(1-H)\ _2F_1\Big(-\frac{\sqrt{4+\mu}}{2},2-\frac{\sqrt{4+\mu}}{2};1-\sqrt{4+\mu};H\Big)\\
	&+ c_2 H^{-1+\frac{\sqrt{4+\mu}}{2}}(1-H)\ _2F_1\Big(\frac{\sqrt{4+\mu}}{2},2+\frac{\sqrt{4+\mu}}{2};1+\sqrt{4+\mu};H\Big).
\end{align*}
Since $M^4$ is necessarily compact we know that $\mu \geq 0$ and hence it is not hard to see that any solution $\kappa$ always has a singularity; for instance when $M^4=\C\mathbb{P}^2$ so that $\mu=4k(k+2)$ we have that
\begin{align*}
	\kappa =\ &c_1 H^{-k-2}(1-H)\ _2F_1\Big(-1-k,1-k;-1-2k;H\Big)\\
	&+ c_2 H^{k}(1-H)\ _2F_1\Big(1+k,3+k;3+2k;H\Big).
\end{align*}
Indeed we see that we always have a singularity at $H=0$, as we already knew from Remark \ref{CP3remark}. 
\subsubsection{Case \ref{MoreEinstein}}
In this case we have that $M^4$ is a negative K\"ahler Einstein $4$-manifold and $u$ is given by
\begin{equation}
	u=\frac{1}{2H^{1/2}(H-1)^{1/2}},
\end{equation}
with $H\in [1,\infty).$
Solving (\ref{separableabeliancase}) for $\kappa$ we find that
\begin{align*}
	\kappa =\ &c_1 H^{-1-\frac{\sqrt{4-\mu}}{2}}(H-1)\ _2F_1\Big(-\frac{\sqrt{4-\mu}}{2},2-\frac{\sqrt{4-\mu}}{2};1-\sqrt{4-\mu};H\Big)\\
	&+ c_2 H^{-1+\frac{\sqrt{4-\mu}}{2}}(H-1)\ _2F_1\Big(\frac{\sqrt{4-\mu}}{2},2+\frac{\sqrt{4-\mu}}{2};1+\sqrt{4-\mu};H\Big).
\end{align*}
By contrast to the previous case since $M^4$ can now be non-compact one can also have negative values for $\mu$: for instance $\mu=-12$ gives
$\kappa ={H^{-3}(H-1)}$ which is gobally well-defined since $H\geq 1$. 
A rather interesting observation is that in this case the only positive value $\mu$ possible is $4$: we do not know if this actually occurs.
\subsubsection{Case \ref{EinsteinonHK}}
Let $(M^4,g_M,\om_1,\om_2,\om_3)$ be a hyperK\"ahler manifold such that $[\om_1]\in H^2(M^4,\mathbb{Z})$ and let $P^6=\R_t \times Q^5$, where $Q^5$ denotes the total space of the $S^1$ bundle determined by $[\om_1]$. From (\ref{EinsteinonHK}), we can define a K\"ahler Einstein structure on $P^6$ by
\begin{gather*}
	g=dt^2 + 4 e^{4t}\Theta^2 + e^{2t}g_M,\\
	\om=2e^{2t}\Theta \w dt+e^{2t}\om_1,
\end{gather*}
where in the previous notation we set $a=1$, $\lambda=16$ and $H=e^{2t}$.
Solving (\ref{separableabeliancase}) for $\kappa(t)$ we find that 
\[\kappa(t)=c_1  e^{-2t}I_2(\mu^{1/2}e^{-t})+c_2 e^{-2t}K_2(\mu^{1/2}e^{-t}),\]
where $I_2$ and $K_2$ denote the modified Bessel functions. By contrast to the situation when $P^6$ was a Calabi-Yau manifold, now we see that the solution $\kappa$ is well-defined for all eigenvalues $\mu$.
If $M^4$ is compact, then from Kodaira's classification of surfaces we know that $M^4$ is either $\mathbb{T}^4$ or a $K3$ surface cf. \cite{Barth}. When $M^4=\mathbb{T}^4=\R^4/\Gamma$, the universal cover of $P^6$ in fact corresponds to the complex hyperbolic space with constant holomorphic sectional curvature $-4$ and the eigenfunctions of the Laplacian are generated by $\sin(k_i x_i)$ and $\cos(k_i x_i)$, where ${k}=(k_1,k_2,k_3,k_4)$ are vectors depending on the co-compact lattice $\Gamma$ and satisfy $\sum k_i^2=\mu$. So we see that the moduli space of instantons already distinguishes between $4$-tori with different lattices.
For the case when $M^4$ is a $K3$ surface, we are unaware of any results pertaining to the eigenvalues of the Laplacian. 

If $M^4$ is non-compact then $\Delta_M$ can also have negative eigenvalues $\mu$ and hence $\kappa(t)$ will also consist of the standard Bessel functions $J_2$ and $Y_2$. The simplest example is when $M^4=\R^4$ in which case we can take $\exp(ax)$ as eigenfunctions.
More generally we can take \textit{suitable} $M^4$ as arising from the Gibbons-Hawking ansatz, see \cite[Sect. 4.2]{UdhavFowdar4}, and hence construct instantons on $P^6$ with $M^4$ as  muti-Eguchi-Hanson and multi-Taub-NUT manifolds.

%By the general philosophy that counting instantons is related to topological data of the underlying manifold and vector bundle and 
\subsubsection{Conti-Salamon examples}
We now consider the half complete example which arises from (\ref{uhalfcomplete}) when $a=1$ and $b=p=q=0$ so that $u=H>0$. Note that in this case $M^4$ can be any hyperK\"ahler manifold with $[\om_1]\in H^2(M^4,\mathbb{Z})$ as above. Solving (\ref{separableabeliancase}) for $\kappa$ we now find that 
\[\kappa=\frac{c_1}{H}J_{2/3}\Big(\frac{2H^{3/2}\sqrt{-\mu}}{3}\Big)+\frac{c_2}{H}Y_{2/3}\Big(\frac{2H^{3/2}\sqrt{-\mu}}{3}\Big).\]
We see that the corresponding instanton has a singularity at $H=0$ (as does the metric). Observe that as above if we take $M^4$ to be compact then $\mu \geq 0$ while if $M^4$ is non-compact then we can also have negative values of $\mu$.

Thus, the results of this section illustrate how the construction of instantons on the $S^1$-invariant K\"ahler Einstein manifold $P^6$ depends on $M^4$ and $u$. In future work we hope to study equation (\ref{parabolicequforpsi}) when the gauge group is non-abelian.

\section{Examples of deformed Hermitian Yang-Mills connections}\label{dhymsection}

In this section we construct explicit examples of $S^1$-invariant deformed Hermitian Yang-Mills connections on $P^6$ as given by (\ref{calabiansatzgenral})-(\ref{EinsteinonHK}).

We consider the set up of Theorem \ref{S1invariantdhymtheorem} when $\psi=\kappa(H)$ is only a function of $H$ i.e. $d_M\psi=0$, and $A=0.$ From (\ref{dhymreduced}) one readily sees that $\kappa(H)$ then satisfies:
%\begin{equation}
%	(r^4-4\kappa(r)^2)\kappa'(r)+4r^3 \kappa(r)=0.
%\end{equation}
\begin{equation}
	\kappa'(H)(\kappa(H)^2-H^2)-2H \kappa(H)=0.\label{dhymsymmetry}
\end{equation}

Note that although the gauge group is abelian the deformed Hermitian Yang-Mills equation (\ref{dhym}) constitute a non-linear PDE in general, which owing to our ansatz is reduced to an ODE in this case. 
Observe also that (\ref{dhymsymmetry}) is independent of the function $u$ (as given by  (\ref{solutionforuEinstein})), which is analogous to the result in Section \ref{CP2section} in the case when the eigenvalue $\mu=0$.

Using the substitution $\kappa(H)=Hz(H)$ in (\ref{dhymsymmetry}) we get a separable equation for $z(H)$ and solving the resulting ODE we find that the general solution to (\ref{dhymsymmetry}) is given by the cubic equation:
\begin{equation}
\kappa^3-3H^2\kappa -c/4=0,\label{cubicequation}
\end{equation} 
where $c$ is a constant. When $c=0$ we get three linear solutions to (\ref{cubicequation}) which intersect at the origin while when $c \neq 0$ we obtain six solutions, each asymptotic the latter linear ones (see Figure 1).  Recall that if ${\underline{A}}$ is a deformed Hermitian Yang-Mills connection then so is $-{\underline{A}}$. Indeed this $\mathbb{Z}_2$-symmetry is reflected in Figure 1 by reflection in the $H$-axis, so without loss of generality we shall restrict to the case when $c \geq 0$. Observe that there is also an additional $\mathbb{Z}_3$-symmetry in Figure 1 relating the latter solutions.

\begin{figure}
	\centering
	\includegraphics[height=7cm]{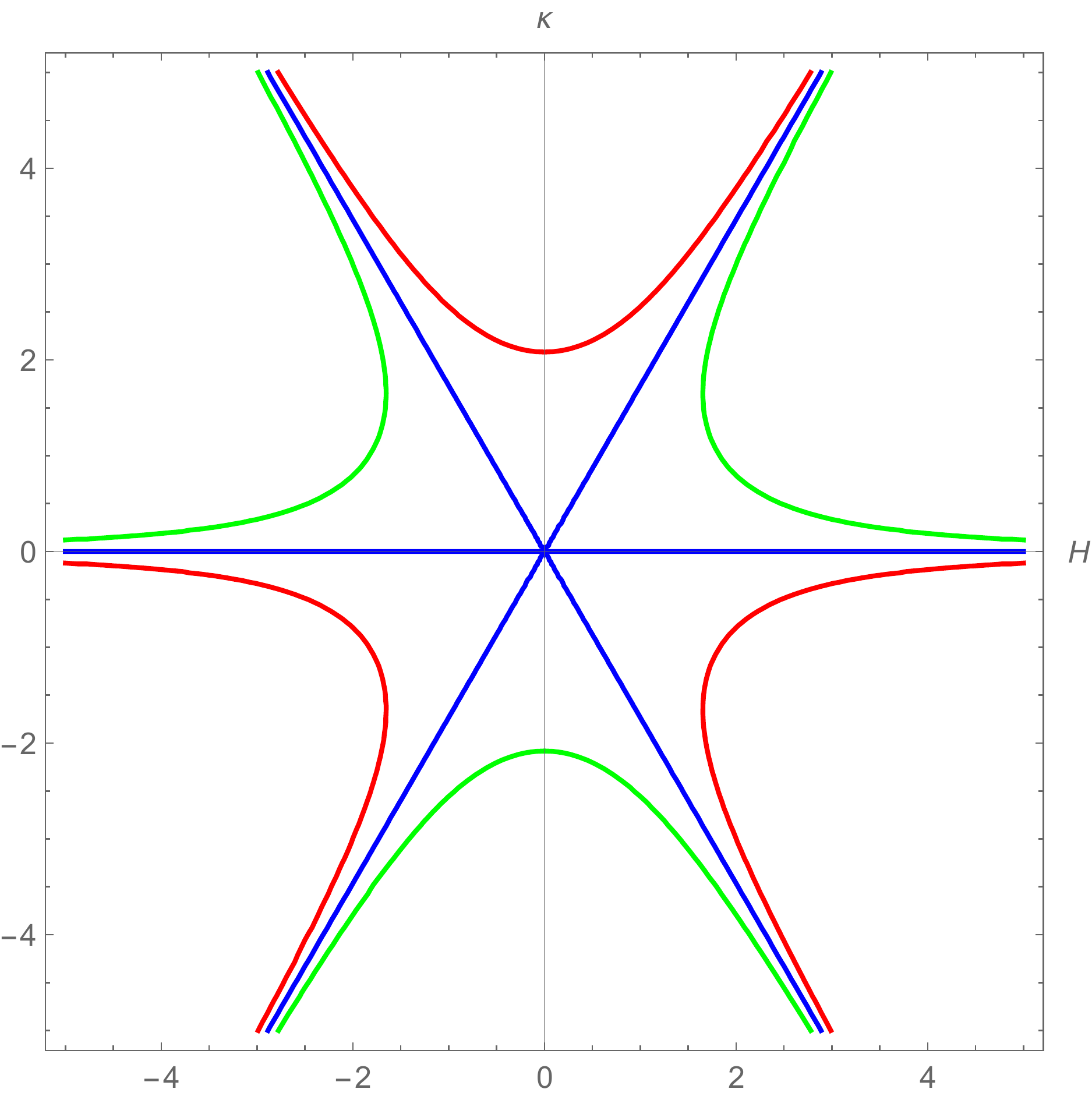}
	\caption{{Plots of (\ref{cubicequation}) for \color{red}{$c>0$}\color{black}, \color{blue}{$c=0$}\color{black}, \color{green}{$c<0$}\color{black}{.}}}
\end{figure}

When $c=0$, from (\ref{cubicequation}) we have that either $\kappa=0$ which corresponds to the flat connection, or $\kappa(H)=\pm H\sqrt{3}$ and hence $F_{\underline{A}}=\mp\sqrt{3}\!\ \om$: this recovers Example 3.4 described in \cite{Lotay2020}. So the solutions to (\ref{cubicequation}) can be considered as deformation of these examples. It is worth emphasising that when $c=0$ the above solutions are in fact Hermitian Yang-Mills connections while for $c> 0$ the solutions are only deformed Hermitian Yang-Mills connections. 
In particular, we see from Figure 1 that there is always one solution well-defined for all $H\in \R$ and thus, we have that:
\begin{Th}\label{dhymtheorem}
On any of the K\"ahler Einstein manifold given in (\ref{calabiansatzgenral})-(\ref{EinsteinonHK}), there exists a $1$-parameter family of deformed Hermitian Yang-Mills connection of the form $\underline{A}=\kappa(H)\Theta$, where $\kappa$ is determined by $c> 0$ via (\ref{cubicequation}). Moreover, as $c \to 0$ $\underline{A}$ converges to the Hermitian Yang-Mills connection $H\sqrt{3}\Theta$ (up to a $\mathbb{Z}_2$ ambiguity). 
\end{Th}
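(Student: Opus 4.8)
The plan is to specialize Theorem~\ref{S1invariantdhymtheorem} to the connection $\underline{A}=\kappa(H)\Theta$, for which $\psi=\kappa(H)$ satisfies $d_M\psi=0$ and $A=0$, and thereby collapse the reduced system to the single ODE (\ref{dhymsymmetry}). First I would observe that (\ref{eve1}) holds automatically: its right-hand side is $\partial_H(A)=0$ while its left-hand side involves $d_M^A\psi=d_M\kappa=0$, so both sides vanish, and $A=0$ is trivially a Hermitian connection. Hence the only constraint is (\ref{dhymreduced}). Setting $F_A=0$ and $d_M\psi=0$ annihilates the three terms of (\ref{dhymreduced}) containing $F_A$ or $d_M\psi$, leaving $\tfrac12 a^2\kappa'(\kappa^2-H^2)-a^2H\kappa=0$, which upon dividing by $\tfrac12 a^2\neq 0$ is exactly (\ref{dhymsymmetry}). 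This reduction applies uniformly to all the manifolds (\ref{calabiansatzgenral})--(\ref{EinsteinonHK}), since each has $b=p=q=0$ and $a\neq 0$, and the vanishing of the $u$-dependent term explains why the equation is independent of $u$.

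Next I would integrate (\ref{dhymsymmetry}). The key is the substitution $\kappa=Hz$, which turns it into the separable equation $Hz'(z^2-1)=-(z^3-3z)$, together with the observation that $\frac{z^2-1}{z^3-3z}=\frac13\,\frac{(z^3-3z)'}{z^3-3z}$ is a logarithmic derivative. Integrating gives $\log|z^3-3z|=-3\log|H|+\mathrm{const}$, i.e.\ $z^3-3z=c'H^{-3}$; undoing $z=\kappa/H$ and renaming $c'=c/4$ yields the cubic relation (\ref{cubicequation}). Since the change of variables degenerates at $H=0$, I would record (\ref{cubicequation}) itself as the primitive relation defining the solution curves in the $(H,\kappa)$-plane: it is smooth across $H=0$ and even in $H$, so the solution set is invariant under $H\mapsto -H$.

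The substantive step is the global analysis of $g(\kappa):=\kappa^3-3H^2\kappa-c/4$ for fixed $c>0$. I would single out the largest real root $\kappa_{\max}(H)$: since $g(0)=-c/4<0$ and $g(\kappa)\to+\infty$ as $\kappa\to+\infty$, a positive root always exists, and from $g(|H|)=-2|H|^3-c/4<0$ one concludes $\kappa_{\max}>|H|$, whence $\partial_\kappa g(\kappa_{\max})=3(\kappa_{\max}^2-H^2)>0$. The implicit function theorem then makes $\kappa_{\max}(H)$ a smooth, even, positive branch defined for every $H\in\R$, giving the claimed connection for each $c>0$. A discriminant computation ($\Delta=108H^6-27c^2/16$) shows the remaining two real roots exist only for $|H|>c^{1/3}/2$ and coalesce there, so they furnish no global solution; thus $\kappa_{\max}$ is the unique globally defined branch, yielding the stated $1$-parameter family. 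For the limit, letting $c\to 0^+$ sends $\kappa_{\max}(H)$ to the largest root of $\kappa^3-3H^2\kappa=0$, namely $\sqrt3\,|H|$, equal to $\sqrt3\,H$ for $H>0$ and to $-\sqrt3\,H$ for $H<0$; the two halves are interchanged by the $\mathbb{Z}_2$-symmetry $F_{\underline{A}}\mapsto-F_{\underline{A}}$ noted after (\ref{dhym}), so up to this ambiguity the limit is $H\sqrt3\,\Theta$, which is Hermitian Yang-Mills because $\kappa=\sqrt3\,H$ gives $F_{\underline{A}}=-\sqrt3\,\om$ as recorded after (\ref{cubicequation}).

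The main obstacle is the global analysis rather than the integration. One must guarantee that the selected root never meets the discriminant locus, where the implicit function theorem fails and the branch could terminate or develop a singularity; the sign computations $g(0)<0$ and $g(|H|)<0$ are precisely what force $\kappa_{\max}>|H|$ and so prevent any coincidence of the largest root with the two smaller ones. One must also interpret the necessarily singular $c\to 0$ limit, in which the single smooth branch degenerates to the piecewise-linear envelope $\sqrt3\,|H|$ with a corner at $H=0$, and here the $\mathbb{Z}_2$-symmetry is exactly what reconciles this corner with the genuinely smooth linear Hermitian Yang-Mills solution $H\sqrt3\,\Theta$.
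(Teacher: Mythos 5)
Your proposal is correct and follows essentially the same route as the paper: specialise Theorem \ref{S1invariantdhymtheorem} with $\psi=\kappa(H)$, $A=0$ to obtain (\ref{dhymsymmetry}), integrate via $\kappa=Hz$ to reach the cubic (\ref{cubicequation}), and then identify the one branch defined for all $H$; your discriminant/implicit-function-theorem argument simply makes rigorous what the paper reads off from Figure 1. The only cosmetic difference is your discussion of the corner of $\sqrt{3}\,|H|$ at $H=0$, which is moot here since $H>0$ on all the manifolds (\ref{calabiansatzgenral})--(\ref{EinsteinonHK}), so on the actual domain the $c\to 0$ limit is genuinely $\sqrt{3}\,H$ and the $\mathbb{Z}_2$ ambiguity is just the overall sign $\kappa\mapsto-\kappa$ coming from the restriction to $c\geq 0$.
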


%In particular, we see that there is always one solution
%\begin{equation}
%	\kappa(r)=\frac{1}{2}\Big(\big(c+\sqrt{c^2-r^{12}}\ \big)^{1/3}+\big(c-\sqrt{c^2-r^{12}}\ \big)^{1/3} \Big),\label{dhymsolution}
%\end{equation}
%\begin{equation}
%\kappa(H)=\frac{1}{2}\Big(\big(c+\sqrt{c^2-(2H)^{6}}\ \big)^{1/3}+\big(c-\sqrt{c^2-(2H)^{6}}\ \big)^{1/3} \Big)\label{dhymsolution}
%\end{equation}
%well-defined for all $H\in \R$ and owing to the $\mathbb{Z}_2$-symmetry we can restrict to $c\geq 0.$

We now consider the aforementioned $\mathbb{Z}_3$-symmetry.
In the case when $P^6$ is as given by the Calabi ansatz, we know that the Calabi-Yau metric (\ref{calabiyaumetric}) is well-defined for $r\in [C^{1/6},\infty)$, where recall that $r^2=2H$, and $C$ essentially corresponds to the size of the zero section. %So for $r\in [C^{1/6},\infty)$, owing to the $\mathbb{Z}_2$-symmetry we can further restrict only to solutions with $c\geq 0$.
From Figure 1 we now see that if $c$ is sufficiently large compared to $C$ then we only get one $1$-parameter family of solutions well-defined for $r\in [C^{1/6},\infty)$.
On the other hand if $c$ is small enough compared to $C$ then we get $3$ distinct $1$-parameter families of solutions: two of which are asymptotic to the solutions $\kappa=\pm H\sqrt{3}$ and one asymptotic to $\kappa=0$. %Again when $c\to0$, the two former solutions coincide with $\kappa=H\sqrt{3}$ and the latter becomes zero. 
Thus, we have that:
\begin{Th}\label{dhymC3theorem}
 On $\C^3$ there exists one $1$-parameter family of deformed Hermitian Yang-Mills connections of the form $\underline{A}=\kappa(H)\Theta$, as given by Theorem \ref{dhymtheorem}, whereas on $\mathcal{O}_{\C\mathbb{P}^2}(-3)$ there exist three such $1$-parameter families. 
\end{Th}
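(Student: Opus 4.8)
The plan is to reduce the whole statement to a root-counting analysis of the cubic (\ref{cubicequation}) over the range of $H$ appropriate to each space. By Theorem \ref{dhymtheorem} an $S^1$-invariant deformed Hermitian Yang-Mills connection of the form $\underline{A}=\kappa(H)\Theta$ is precisely a real solution $\kappa(H)$ of $\kappa^3-3H^2\kappa-c/4=0$, and by the $\mathbb{Z}_2$-symmetry $\underline{A}\mapsto-\underline{A}$ we may take $c\geq 0$. Recalling from (\ref{calabiyaumetric}) that $r^2=2H$ and that the Calabi ansatz metric is defined for $r\in[C^{1/6},\infty)$, the admissible range of $H$ is $[0,\infty)$ when $C=0$ (the case $\C^3$) and $[H_0,\infty)$ with $H_0:=C^{1/3}/2>0$ when $C>0$ (the case $\mathcal{O}_{\C\mathbb{P}^2}(-3)$). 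A globally well-defined $1$-parameter family then corresponds to a branch of solutions of (\ref{cubicequation}) that persists as a single-valued smooth function on the whole of this $H$-interval as the parameter $c$ varies.

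First I would analyse, for fixed $H>0$, the cubic as the level set $g(\kappa)=c/4$ of $g(\kappa):=\kappa^3-3H^2\kappa$, whose critical points are $\kappa=\pm H$ with local maximum $g(-H)=2H^3$ and local minimum $g(H)=-2H^3$; equivalently the discriminant of (\ref{cubicequation}) is proportional to $4H^6-c^2/16$. Hence for $c>0$ the equation has three distinct real roots when $H>H_*:=(c/8)^{1/3}$, a double root at $H=H_*$, and a single real root when $0\le H<H_*$. The three large-$H$ branches have asymptotics $\kappa\sim\sqrt{3}\,H$, $\kappa\sim 0$ and $\kappa\sim-\sqrt{3}\,H$; the latter two merge at the turning point $(H_*,-H_*)$ and become complex for $H<H_*$, while the branch $\kappa\sim\sqrt{3}\,H$ is the unique real root surviving down to $H=0$, where $\kappa(0)=(c/4)^{1/3}$. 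Since on this branch $\kappa>H$, one has $g'(\kappa)=3(\kappa^2-H^2)>0$ and the implicit function theorem guarantees it is smooth throughout.

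Specialising to each space then gives the count. For $\C^3$ the domain $[0,\infty)$ always contains $[0,H_*)$, on which only one real root exists, so for every $c>0$ the only branch extending to a single-valued smooth function on all of $[0,\infty)$ is the upper one $\kappa\sim\sqrt{3}\,H$; the ``nose'' formed by the other two branches terminates at $H_*>0$ and is double-valued, hence yields no global solution. This gives exactly one $1$-parameter family, limiting to $H\sqrt{3}\,\Theta$ as $c\to0$. For $\mathcal{O}_{\C\mathbb{P}^2}(-3)$, instead, choosing $0<c<C$ forces $H_*=(c/8)^{1/3}<C^{1/3}/2=H_0$, so the entire domain $[H_0,\infty)$ lies in the region $H>H_*$ where the three roots are distinct; each is then a smooth single-valued function on $[H_0,\infty)$, producing three distinct $1$-parameter families, with two asymptotic to $\kappa=\pm\sqrt{3}\,H$ and one asymptotic to $\kappa=0$.

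The main obstacle I anticipate is not the root count itself but the careful verification that the three $H>H_*$ roots are genuinely single-valued and smooth across the whole interval and assemble into exactly the claimed number of connected families; in particular, showing that on $\C^3$ the two nose branches cannot be continued past the turning point into global solutions, whereas on $\mathcal{O}_{\C\mathbb{P}^2}(-3)$ the positive lower bound $H_0>0$ precisely excises the turning point from the domain when $c$ is small. This dichotomy, namely whether the critical value $H_*$ falls inside or outside the admissible $H$-range, is exactly what distinguishes $C=0$ from $C>0$ and drives the count $1$ versus $3$.
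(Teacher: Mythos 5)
Your proposal is correct and follows essentially the same route as the paper: the count hinges on whether the turning point of the cubic branches lies inside the admissible $H$-range, which is $[0,\infty)$ for $\C^3$ and $[C^{1/3}/2,\infty)$ for $\mathcal{O}_{\C\mathbb{P}^2}(-3)$. The only difference is that you make the branch structure rigorous via the discriminant/critical-value analysis of $g(\kappa)=\kappa^3-3H^2\kappa$, whereas the paper reads the same dichotomy off the plot of (\ref{cubicequation}) in Figure 1.
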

In particular, the preceding argument implies that while the flat connection can be deformed to a deformed Hermitian Yang-Mills connection on $\mathcal{O}_{\C\mathbb{P}^2}(-3)$, this cannot be done on $\C^3$, at least not via a family of the form $\underline{A}=\kappa(H)\Theta$.

\bibliography{biblioG}

\begin{thebibliography}{10}

\bibitem{HandbookofMathematicalFunctions}
Milton Abramowitz.
\newblock {\em Handbook of Mathematical Functions, With Formulas, Graphs, and
  Mathematical Tables}.
\newblock Dover Publications, Inc., 1974.

\bibitem{Apostolov2003}
Vestislav Apostolov and Simon Salamon.
\newblock K{\"a}hler reduction of metrics with holonomy {${G_2}$}.
\newblock {\em Communications in Mathematical Physics}, 246(1):43--61, 2004.

\bibitem{Barth}
Wolf Barth, Klaus Hulek, Chris Peters, and Antonius Van~de Ven.
\newblock {\em Compact Complex Surfaces}, volume~4.
\newblock Springer-Verlag Berlin Heidelberg, 2004.

\bibitem{Bedford77}
Eric Bedford and Morris Kalka.
\newblock Foliations and complex {M}onge-{A}mp{\`e}re equations.
\newblock {\em Communications on Pure and Applied Mathematics}, 30(5):543--571,
  1977.

\bibitem{Berger1971spectre}
Marcel Berger, Paul Gauduchon, and Edmond Mazet.
\newblock {\em Le {S}pectre d'une vari{\'e}t{\'e} {R}iemannienne}.
\newblock Springer, 1971.

\bibitem{Brandhuber01}
Andreas {Brandhuber}, Jaume {Gomis}, Steven~S. {Gubser}, and Sergei {Gukov}.
\newblock {Gauge theory at large N and new G$_{2}$ holonomy metrics}.
\newblock {\em Nuclear Physics B}, 611(1):179--204, 2001.

\bibitem{Bryant2004SL}
Robert~L. Bryant.
\newblock {$SO(n)$}-invariant {S}pecial {L}agrangian {S}ubmanifolds of
  {$\mathbb{C}^{n+1}$} with fixed loci*.
\newblock {\em Chinese Annals of Mathematics, Series B}, 27:95--112, 2004.

\bibitem{BryantEmbedding}
Robert~L. Bryant.
\newblock Non-embedding and non-extension results in special holonomy.
\newblock {\em The Many Facets of Geometry: A Tribute to Nigel Hitchin}, pages
  346--367, 2010.

\bibitem{Bryant1989}
Robert~L Bryant and Simon~M Salamon.
\newblock {On the construction of some complete metrics with exceptional
  holonomy.}
\newblock {\em Duke Math. J.}, 58(3):829--850, 1989.

\bibitem{Calabi1957}
Eugenio Calabi.
\newblock On {K}{\"a}hler manifolds with vanishing canonical class.
\newblock In {\em Algebraic Geometry and Topology}, pages 78--89. Princeton
  University Press, 1957.

\bibitem{Carrion1998}
Ram{\'o}n~Reyes Carri{\'o}n.
\newblock A generalization of the notion of instanton.
\newblock {\em Differential Geometry and its Applications}, 8(1):1--20, 1998.

\bibitem{ChiossiSalamonIntrinsicTorsion}
Simon Chiossi and Simon Salamon.
\newblock {The intrinsic torsion of $SU(3)$ and $G_2$ structures}.
\newblock In {\em Differential Geometry, Valencia 2001}, pages 115--133. World
  Scientific, 2002.

\bibitem{ContiSalamon2007}
Diego Conti and Simon Salamon.
\newblock Generalized {K}illing {S}pinors in dimension 5.
\newblock {\em Transactions of the American Mathematical Society},
  359(11):5319--5343, 2007.

\bibitem{DeTurckKazdan1981}
Dennis DeTurck and Jerry~L. Kazdan.
\newblock Some regularity theorems in {R}iemannian geometry.
\newblock {\em Annales scientifiques de l'\'Ecole Normale Sup{\'e}rieure}, Ser.
  4, 14(3):249--260, 1981.

\bibitem{Donaldson1987}
Simon Donaldson.
\newblock {Infinite determinants, stable bundles and curvature}.
\newblock {\em Duke Mathematical Journal}, 54(1):231--247, 1987.

\bibitem{UdhavFowdar2}
Udhav {Fowdar}.
\newblock {{Spin(7)} metrics from {K}{\"a}hler Geometry}.
\newblock {\em arXiv:2002.03449 (to appear in Communications in Analysis and
  Geometry)}, 2020.

\bibitem{UdhavFowdar4}
Udhav {Fowdar}.
\newblock {Einstein metrics on bundles over hyperK{\"a}hler manifolds}.
\newblock {\em arXiv:2105.04254 (to appear in Communications in Mathematical
  Physics)}, 2021.

\bibitem{Goldstein2001}
Edward Goldstein.
\newblock {Calibrated fibrations on noncompact manifolds via group actions}.
\newblock {\em Duke Mathematical Journal}, 110(2):309--343, 2001.

\bibitem{HarveyLawson}
Reese Harvey and H.~Blaine Lawson.
\newblock Calibrated geometries.
\newblock {\em Acta Math.}, 148:47--157, 1982.

\bibitem{Haskins2004SL}
Mark Haskins.
\newblock Special {L}agrangian cones.
\newblock {\em American Journal of Mathematics}, 126(4):845--871, 2004.

\bibitem{HeinSunViaclovskyZhang2022}
{Hans Joachim} Hein, Song Sun, Jeff Viaclovsky, and Ruobing Zhang.
\newblock Nilpotent structures and collapsing {R}icci-flat metrics on the
  {{K}$3$} surface.
\newblock {\em Journal of the American Mathematical Society}, 35(1):123--209,
  2022.

\bibitem{Hitchin2000}
Nigel Hitchin.
\newblock {The geometry of three-forms in six and seven dimensions}.
\newblock {\em Journal Differential Geometry}, 55(3):547--576, 2000.

\bibitem{HKLR}
Nigel Hitchin, Anders Karlhede, Ulf Lindstr{\"o}m, and Martin Ro{\v{c}}ek.
\newblock Hyper{K}{\"a}hler metrics and {S}upersymmetry.
\newblock {\em Communications in Mathematical Physics}, 108(4):535--589, 1987.

\bibitem{HwangSinger1998}
Andrew~D. Hwang and Michael~A. Singer.
\newblock A momentum construction for circle-invariant {K}{\"a}hler metrics.
\newblock {\em Transactions of the American Mathematical Society},
  354:2285--2325, 2002.

\bibitem{Marcos2017}
Marcos Jardim, Gr{\'e}goire Menet, Daniela~M. Prata, and Henrique~N.
  S{\'a}~Earp.
\newblock Holomorphic bundles for higher dimensional gauge theory.
\newblock {\em Bulletin of the London Mathematical Society}, 49(1):117--132,
  2017.

\bibitem{Joyce2000SL}
Dominic Joyce.
\newblock Special {L}agrangian {$m$}-folds in {$\mathbb{C}^m$} with symmetries.
\newblock {\em Duke Mathematical Journal}, 115:1--51, 2000.

\bibitem{Joycebook}
Dominic Joyce.
\newblock {\em Riemannian holonomy groups and calibrated geometry}, volume~12.
\newblock Oxford University Press, 2007.

\bibitem{KobayashiNomizu2}
S.~Kobayashi and K.~Nomizu.
\newblock {\em Foundations of Differential Geometry II}.
\newblock Volume 61 of Wiley Classics Library. Wiley, 1963.

\bibitem{Leung2000}
{Naichung Conan} Leung, {Shing Tung} Yau, and Eric Zaslow.
\newblock From special lagrangian to hermitian-{Y}ang-{M}ills via
  {F}ourier-{M}ukai transform.
\newblock {\em Advances in Theoretical and Mathematical Physics},
  4(6):1319--1341, 2000.

\bibitem{Lotay2016}
Jason~D. {Lotay} and Goncalo {Oliveira}.
\newblock {{$SU(2)^2$}-invariant {$G_2$}-instantons}.
\newblock {\em Mathematische Annalen}, 371(1):961--1011, 2018.

\bibitem{Lotay2020}
Jason~D. {Lotay} and Goncalo {Oliveira}.
\newblock {Examples of deformed {$G_2$}-instantons/Donaldson-Thomas
  connections}.
\newblock {\em arXiv:2007.11304 (to appear in Annales de l'Institut Fourier)},
  2020.

\bibitem{Marino2000}
Marcos Mari{\~{n}}o, Ruben Minasian, Gregory Moore, and Andrew Strominger.
\newblock Nonlinear instantons from supersymmetric {$p$}-branes.
\newblock {\em Journal of High Energy Physics}, 2000(1):005--005, 2000.

\bibitem{Turner2022}
Karsten {Matthies}, Johannes {Nordstr{\"o}m}, and Matt {Turner}.
\newblock {{$SU(2)^2 \times U(1)$}-invariant {$G_2$}-instantons on the {AC}
  limit of the {C7} family}.
\newblock {\em arXiv:2202.05028}, 2022.

\bibitem{Oliveira2016}
Goncalo Oliveira.
\newblock {C}alabi-{Y}au monopoles for the {S}tenzel metric.
\newblock {\em Communications in Mathematical Physics}, 341(2):699--728, 2016.

\bibitem{PagePope1985}
Don~N. Page and C.~N. Pope.
\newblock {Inhomogeneous Einstein metrics on complex line bundles}.
\newblock {\em Classical and Quantum Gravity}, 4:213--225, 1987.

\bibitem{Papoulias2021}
Vasileios~Ektor Papoulias.
\newblock {{S}pin($7$)} instantons and {H}ermitian {Y}ang-{M}ills connections
  for the {S}tenzel metric.
\newblock {\em Communications in Mathematical Physics}, 384(3):2009--2066,
  2021.

\bibitem{SaEarp2015}
Henrique {S}{\'a} {E}arp.
\newblock {{$G_2$}-instantons over asymptotically cylindrical manifolds}.
\newblock {\em {G}eometry {and} {T}opology}, 19(1):61--111, 2015.

\bibitem{Salamoncomplexstructures}
Simon {Salamon}.
\newblock Complex structures on nilpotent {L}ie algebras.
\newblock {\em Journal of Pure and Applied Algebra}, 157(2):311 -- 333, 2001.

\bibitem{Stein2022}
Jakob {Stein}.
\newblock {{$SU(2)^2$}-invariant gauge theory on asymptotically conical
  {C}alabi-{Y}au {$3$}-folds}.
\newblock {\em arXiv:2110.05439}, 2021.

\bibitem{Stenzel1993}
Matthew~B. Stenzel.
\newblock Ricci-flat metrics on the complexification of a compact rank one
  symmetric space.
\newblock {\em Manuscripta Mathematica}, 80(1):151--163, 1993.

\bibitem{SYZ1996}
Andrew Strominger, Shing-Tung Yau, and Eric Zaslow.
\newblock Mirror symmetry is {T}-duality.
\newblock {\em Nuclear Physics B}, 479(1):243--259, 1996.

\bibitem{Tian2000}
Gang Tian.
\newblock {G}auge theory and calibrated geometry, {I}.
\newblock {\em Annals of Mathematics}, 151(1):193--268, 2000.

\bibitem{TianYau}
Gang {Tian} and Shing~Tung {Yau}.
\newblock Complete {K}{\"a}hler manifolds with zero {R}icci curvature. {I}.
\newblock {\em Journal of the American Mathematical Society}, 3(3):579--609,
  1990.

\bibitem{UhlenbeckYau1986}
Karem Uhlenbeck and Shing-Tung Yau.
\newblock On the existence of {H}ermitian {Y}ang-{M}ills connections in stable
  vector bundles.
\newblock {\em Communications on Pure and Applied Mathematics},
  39(S1):S257--S293, 1986.

\end{thebibliography}
\bibliographystyle{plain}

\end{document}